\documentclass[10pt]{article}

\usepackage[a4paper, margin=1.4in]{geometry}


\usepackage{amsmath} 
\usepackage{amssymb, amsthm} 
\usepackage[usenames, dvipsnames]{color}

\usepackage{graphicx}

         \newtheorem{theorem}{Theorem}
	 \newtheorem{proposition}[theorem]{Proposition}
	 \newtheorem{lemma}[theorem]{Lemma}
         \newtheorem{reform}{Problem}

\theoremstyle{definition}        
         \newtheorem{remark}[theorem]{Remark}

\newcommand \Nbb {\mathbb{N}}
\newcommand \Rbb {\mathbb{R}}
\newcommand \Tbb {\mathbb{T}}
\newcommand \Cbb {\mathbb{C}}
\newcommand \Dbb {\mathbb{D}}
\newcommand \Zbb {\mathbb{Z}}

\newcommand \be {\begin{equation}}
\newcommand \ee {\end{equation}}

\parskip 3pt

\def \h {\hat}
\def \t {\tilde} 

\newcommand \pa {\partial}

\newcommand \al {\alpha}
\newcommand \de {\delta}
\newcommand \ep {\epsilon}
\newcommand \varep {\varepsilon}
\newcommand \ga {\gamma}
\newcommand \Ga {\Gamma}

\newcommand \si {\sigma}
\newcommand \ka {\kappa}
\newcommand \om {\omega}
\newcommand \Om {\Omega}

\newcommand \Ccal {\mathcal{C}}
 
\newcommand \Fcal {\mathcal{F}} 
\newcommand \Lcal {\mathcal{L}}
\newcommand \Scal {\mathcal{S}}
\newcommand \Ncal {\mathcal{N}}
\newcommand \Ical {\mathcal{I}}
\newcommand \Wcal {\mathcal{W}}
\newcommand \Vcal {\mathcal{V}}

\newcommand \dist {\textup{dist}}

\DeclareMathOperator\Imag{Im}

\title{Self-intersecting interfaces for stationary solutions\\ of the
  two-fluid Euler equations}
\author{Diego C\'ordoba\thanks{dcg@icmat.es}, Alberto Enciso\thanks{aenciso@icmat.es} and Nastasia Grubic\thanks{nastasia.grubic@icmat.es}}
\date{\normalsize Instituto de Ciencias Matem\'aticas\\ Consejo Superior de
  Investigaciones Cient\'\i ficas\\ 28049 Madrid, Spain}

\begin{document}

\maketitle

\begin{abstract}
  We prove that there are stationary solutions to the 2D
  incompressible free boundary Euler equations with two fluids, possibly with a small gravity constant, that feature a splash singularity. More
  precisely, in the solutions we construct the interface is a
  $\Ccal^{2,\al}$~smooth curve that intersects itself at one point,
  and the vorticity density on the interface is of
  class~$\Ccal^\al$. The proof consists in perturbing Crapper's family
  of formal stationary solutions with one fluid, so the crux is to
  introduce a small but positive second-fluid density. To do so, we
  use a novel set of weighted estimates for self-intersecting
  interfaces that squeeze an incompressible fluid. These estimates
  will also be applied to interface evolution problems in a
  forthcoming paper.
\end{abstract}

\section{Introduction}

Let us consider the two-fluid
	incompressible irrotational Euler equations in $\Rbb^2$, where
        a time-dependent interface
$$
\Ga(t) = \{z(\al,t) = (z_1(\al,t), z_2(\al,t)) \,|\, \alpha\in \Rbb\}
$$
	separates the plane in two disjoint open regions $\Omega_j(t)$, with
        $j=1,2$. Each $\Omega_j(t)$ denotes the region occupied by the
        two different fluids with velocities $v^j=(v_1^j,v_2^j)$,
        different constant densities $\rho_j$ and pressures $p^j$,
        which evolve according to the Euler equations:
\begin{subequations}\label{equations}
\begin{align}
\rho_j (\pa_t v^j + (v^j \cdot \nabla )v^j = - \nabla p^j - g \rho_j \, e_2 \quad &\text{in} \quad \Omega_j(t),\\
\nabla\cdot v^j=0  \quad \mathrm{and} \quad \nabla^{\perp}v^j=0  \quad &\text{in} \quad \Omega_j(t), \\
(\pa_t z- v^j) \cdot (\pa_\al z)^\perp = 0 \quad &\mathrm{on} \quad \Ga(t),\\
p^1 - p^2 = -\sigma K \quad &\mathrm{on} \quad \Ga(t).
\end{align} 
\end{subequations}
Here $j\in\{1,2\}$, $\sigma>0$ is the surface tension coefficient,
$e_2$ is the second vector of a Cartesian basis and
$K$ is the curvature of the interface. 
	
In the case of water waves (that is, $\rho_1 =\si=0$
and the fluid is irrotational),
Castro, C\'ordoba, Fefferman, Gancedo and G\'omez-Serrano \cite{CCFGG1}
proved the formation of splash singularities in finite time, meaning
that the interface remains smooth but self-intersects at a point. The
system of water waves in $\Rbb^2$ can be written in terms of the
boundary curve $z(\al,t) $ and the vorticity density in the boundary,
defined through the formula
$$
\nabla^\perp \cdot v  =:  \om(\al,t)\, \delta (x - z(\al,t))\,.
$$
Notice that for these types of singularities to happen the amplitude
of the  vorticity $\om(\al,t)$ has to blow-up at the point of the
self-intersection.   With a different approach Coutand and Shkoller
proved  in \cite{CS2} that these singularities also occur when you
consider non-zero vorticity. It is also known that the presence of surface tension does not prevent the formation of a splash singularity~\cite{CCFGG2}. 

A remarkable result of Fefferman, Ionescu and Lie~\cite{FIL} (see also \cite{CS1}) ensures
that, if both fluid densities are strictly positive, splash
singularities cannot occur. More precisely, if the interface does
not have any self-intersections at time~0, and the curve~$z(\alpha,t)$
and the velocity field of the fluid on the interface remain bounded
in~$\Ccal^4$ and~$\Ccal^3$, respectively, up to time~$T$, their theorem
ensures that the interface does not develop any self-intersections in
the time interval~$[0,T]$. Furthermore, the proof shows that their
result on the absence of splash singularities with two fluids still
holds if one replaces the above regularity hypotheses by
weaker regularity requirements.

Our mid-term objective is to construct a scenario of
singularity formation for the Euler equations with two fluids, which
must involve interfaces or vorticities of low regularity
by the aforementioned result of Fefferman, Ionescu and
Lie. Intuitively, if the interface looses smoothness as one approaches
the time of self-intersection because the two collapsing waves become
sharper and sharper, this would allow the incompressible fluid in
between to escape more easily, and the collapsing waves would still be able to travel towards each other and intersect in finite time. The self-intersection of an interface has been observed numerically for the vortex sheet dynamics in \cite{HLS} where the strength of the vortex sheet and the interfacial curvature diverge at the collision time.

As a first step in this program, our goal in this paper is to prove
the existence of {\em stationary}\/ splash singularities with two
fluids. More precisely, for any value of the density of the lower
fluid~$\rho_2$ and all small enough upper fluid density~$\rho_1$ we
show that there are stationary solutions that exhibit a splash. The a
priori regularity of the vorticity and of the interface is slightly
below the requirements of the no-splash theorem of Fefferman, Ionescu
and Lie. Omitting the periodicity and decay conditions on the
velocities at infinity, which will be made precise in
Section~\ref{S.setting}, our main result can be stated as follows:

\begin{theorem}\label{T.main}
Let us fix the density of the second fluid $\rho_2>0$. Then there exists  some surface tension coefficient~$\si>0$, such that for any
small enough upper fluid density~$\rho_1>0$ and~$g$ there is a
  stationary solution to the two-fluid Euler equations for
  which the interface~$\Ga$ has a splash singularity. The regularity
  of~$\Ga$ and of the vorticity on the interface is~$\Ccal^2$
  and~$\Ccal^\alpha$, respectively.
\end{theorem}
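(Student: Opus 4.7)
\medskip
\noindent\textbf{Proof plan.} The strategy is to bifurcate from Crapper's one-parameter family of explicit capillary waves, which are formal stationary solutions of \eqref{equations} with $\rho_1=0$ and $g=0$ (a single fluid with a free surface under surface tension). Crapper's curves develop self-intersections past a threshold value of the parameter, so one can fix a reference configuration $(z_*,\om_*)$ in this family whose interface $\Ga_*$ is a $\Ccal^{2,\al}$~curve with exactly one transverse self-intersection, and whose vorticity density $\om_*$ is smooth and vanishes at the splash point to the correct order. The claim is that $(z_*,\om_*)$ perturbs to a genuine two-fluid stationary solution once $\rho_1$ and $g$ are turned on.

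I would rewrite the stationary system \eqref{equations} as a functional equation $\Fcal(z,\om,\rho_1,g)=0$ in which $z$ parametrizes the interface and $\om$ is the vorticity density. In each region $\Om_j$, the velocity field $v^j$ is reconstructed from $\om$ by the Birkhoff--Rott integral determined by $z$; the kinematic condition imposes tangency of $v^j$ to $\Ga$; and the pressure jump is eliminated using Bernoulli's theorem in each fluid, yielding a single scalar equation that couples $\om$, the curvature of $\Ga$, and the trace of $|v^j|^2$ from both sides. For $\rho_1=g=0$, this reduces to the Crapper identity, so $\Fcal(z_*,\om_*,0,0)=0$. The target is to apply an implicit function theorem with $(\rho_1,g)$ as bifurcation parameters.

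The hard part will be the linearization. The derivative $D_{(z,\om)}\Fcal$ at the reference configuration splits into a Crapper piece (whose cokernel and kernel are governed only by the translation and scaling invariances, which one removes by standard fixing) plus a correction term coming from the upper fluid velocity evaluated on $\Ga$. The upper domain $\Om_1$ is pinched to a cusp at the splash point, so the Biot--Rott integrals for $v^1$ on $\Ga_*$ are singular in a non-standard way: the natural Cauchy kernel is evaluated at two almost-coinciding points on opposite sides of the splash. Controlling these near-diagonal contributions requires an entirely new family of weighted Hölder estimates that encode algebraic decay/vanishing both in the arclength distance to the splash on $\Ga_*$ and in the width of the neck. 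This is the analytical heart of the paper and is exactly the tool singled out in the abstract; it replaces the usual elliptic estimates, which degenerate at the cusp.

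Once the weighted estimates yield a bounded right inverse for $D_{(z,\om)}\Fcal$ modulo the symmetry-induced finite-dimensional subspace, the implicit function theorem provides a smooth branch $(z_{\rho_1,g},\om_{\rho_1,g})$ of solutions for all sufficiently small $\rho_1>0$ and $g\geq 0$, with prescribed surface tension $\si>0$ inherited from the chosen Crapper wave. Continuity of this branch in the weighted $\Ccal^{2,\al}\times\Ccal^\al$ topology forces the perturbed interface to still self-intersect at a single point for $\rho_1$ small (the self-intersection is stable under a perturbation that is smaller, near the splash, than the scale of the neck in the chosen weights), and the vorticity density remains in $\Ccal^\al$. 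This yields Theorem~\ref{T.main}; the regularity statement is saturated because the weights force a loss of one derivative at the splash compared with the smooth Crapper solution.
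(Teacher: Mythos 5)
Your overall strategy --- perturb a Crapper wave via an implicit function theorem, with new weighted estimates to handle the singular geometry at the splash --- is the one the paper follows. However, there are several genuine gaps and mischaracterizations that would derail the argument as you have sketched it.

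First, your reference configuration is wrong. You propose to bifurcate from a Crapper curve with ``exactly one transverse self-intersection.'' No such curve exists in the Crapper family: for the critical parameter $A=A_0$ the two branches touch tangentially (both have vertical tangents there, $\theta(\pm\al_*)=\pm\pi/2$), and for $A>A_0$ the curve crosses itself at \emph{two} transverse points. The theorem requires exactly the tangential touching, i.e.\ the splash: this is what produces two upper regions with \emph{outward cusps} at a common tip, which is the geometry the weighted estimates are built around. A transverse crossing would not even give a well-posed two-phase configuration, since the upper and lower regions would overlap.

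Second, the cokernel of the linearization is not a consequence of translation or scaling symmetry, and it cannot be ``removed by standard fixing.'' The linearized pure capillary operator $\Ga v=q\,v'+\cosh(H\theta_A)\,Hv$ is injective with a one-dimensional cokernel spanned by $\cos\theta_A$; this is a structural feature of the Levi-Civita formulation, not a symmetry degeneracy. The paper restores surjectivity by introducing the Bernoulli constant $\kappa$ as an extra parameter and running a Lyapunov--Schmidt reduction. Without this, the implicit function theorem simply does not apply, even formally.

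Third, the actual functional-analytic heart of the proof --- and the reason the abstract advertises a ``novel set of weighted estimates'' --- is the invertibility of the boundary operator $\pi I+S$ on weighted Sobolev spaces $\Wcal^{1,p}_\beta$ with power weights $|q|^\beta$ centered at the splash, in the range $1-\mu<\beta+p^{-1}<\mu$. You describe ``a new family of weighted H\"older estimates'' with weights encoding both arclength distance and ``neck width,'' which is not what is used and, more importantly, you do not indicate \emph{how} one would invert the relevant Neumann operator on a domain whose complement has two outward cusps. The paper does this by a Fredholm argument: it shows $\pi I+T$ (the double-layer operator) is invertible, via explicit construction of bounded harmonic functions obtained by conformally mapping the cusp domains to a strip and solving there by Fourier analysis (the Maz'ya--Soloviev machinery), and that $S-T$ is compact on the weighted spaces; combined with injectivity this gives invertibility of $\pi I+S$. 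Your sketch treats the singular Birkhoff--Rott integrals as a perturbative correction whose size must be controlled, but there is no smallness to exploit: the operator acting on $\om$ coming from the Birkhoff--Rott integral is an $O(1)$ piece of the linearization and one must prove its invertibility on the weighted spaces, not just bound it.

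Finally, a smaller but worth noting point: your last paragraph argues that the self-intersection is ``stable under perturbation.'' The paper does not perturb an already self-intersecting curve; it works in a nonlinear manifold $M(\al_*)$ of tangent-angle functions $\theta$ satisfying the two constraints $\theta(\al_*)=\pi/2$ and $\int_0^{\al_*}e^{-H\theta+i\theta}\,d\al'=0$, which \emph{enforce} the splash for every member. The splash is built into the function space, not preserved by a continuity argument.
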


It is worth emphasizing that, although recently there has been much
interest in stationary solutions of the Euler equations~\cite{AAW,BDT,Chae,Sverak,CS,CEG,boeck,EP12,EP15,EP17,Nadirashvili3,Nadirashvili,Nadirashvili2},
our motivation for this paper comes mainly from the dynamical
case. To see the relevance of this result on stationary solutions for our
program to establish the formation of splash singularities with two
fluids, it is convenient to compare the proof of Theorem~\ref{T.main}
with our previous paper~\cite{CEG}. There we started from a family of
(sometimes formal) one-parameter family of exact stationary solutions
to the water waves equations without gravity (i.e., $\rho_1=0$
and~$g=0$) known as Crapper waves~\cite{crapper}, which exhibit a
splash singularity for a certain value of the parameter. 
\begin{figure}[h]
	\centering
	\includegraphics[scale=0.15]{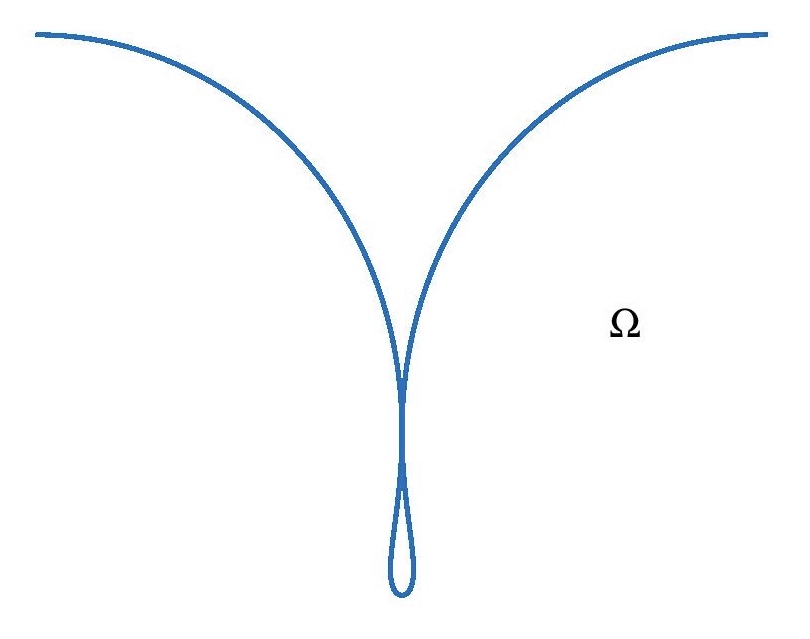}
	\caption{The critical Crapper wave}
\end{figure}
The interface and the vorticity on the boundary are smooth for all admissible values of the parameter. We then perturbed the Crapper waves, using an
implicit function theorem, to construct a one-parameter family of
smooth stationary solutions for all small enough values of~$g$
and~$\rho_1\geq0$, showing that in particular there are stationary splash
singularities if~$\rho_1=0$. For positive upper fluid densities, we
managed to prove that there are stationary splash singularities
featuring smooth ``almost-splash'' interfaces, meaning curves that are
arbitrarily close to self-intersecting, but we could not show that
stationary splash singularities can arise with two
fluids because the key estimates needed to apply the implicit
function theorem break down in this case. In plain words, what happens
is that when the splash occurs in the one-fluid case it suffices to study the
equations inside the inner-regular domain occupied by the lower fluid,
while in the two-fluid case one must also study what happens in the
region occupied by the upper fluid, which has a cusp. The strategy of perturbing Crapper waves to obtain a larger class of stationary water waves was introduced in \cite{AAW} and exploited in \cite{boeck}.

Hence, the heart of this paper is to develop a new set of estimates
that permits us to analyze two fluids separated by a splash curve. The
functional framework in which we managed to do this, which owes much to
the work of Maz'ya and Soloviev on singular integral operators in
domains with cusps~\cite{MazSo}, is that of Sobolev spaces with
singular weights. Essentially, the singularity of the weight, whose
strength depends on a parameter~$\mu$,  is
located at the splash point of the interface and accounts for the fact
that the solutions we construct (and the very interface curve) is not
smooth, in contrast to what happens in our previous
paper~\cite{CEG}. Loosely speaking, the role of the weighted estimates
that underlie the proof of Theorem~\ref{T.main} is to describe what
happens when an incompressible fluid get pinched by a cusp. We will
exploit this idea in the dynamical situation elsewhere~\cite{forthcoming}, so for future
reference we have chosen to carry out the weighted estimates in
slightly more generality than we need here. Thus we shall see
that the specific $\Ccal^\al$--$\Ccal^{2,\al}$ regularity we find here is by
no means a fundamental feature of the functional spaces that we
employ here, but a consequence under this functional framework of the
smoothness and specific geometric structure of the Crapper waves that
we are perturbing. In particular, lower vorticity and
interface regularity may arise from the same functional framework with
a parameter~$\mu$ other than the one corresponding to perturbations
of the Crapper wave, namely $\mu=1$.

The paper is organized as follows. In Section~\ref{S.setting} we
carefully present the setting of the problem and introduce some
notation. The basic weighted estimates for singular integral operators
that we employ in this paper are derived in Section~\ref{S.estimates},
where we also introduce the function spaces that we work with. In
Section~\ref{S.invertibility} we discuss the invertibility on splash domains of
an operator associated with the Neumann problem for the
Laplacian. Finally, in
Section~\ref{S.IFT} we present the proof of Theorem~\ref{T.main},
which follows from an implicit function theorem argument that hinges
on the estimates derived in the preceding sections. The statements and
(sketches of) proofs of some auxiliary
results that are adaptations and small refinements of estimates
available in the literature have been relegated to
Section~\ref{S.aux}. The paper concludes with an Appendix where we
recall some estimates for the Hardy operator, some Fourier multiplier
theorems on weighted Sobolev spaces and some results on harmonic
functions that are employed throughout the paper.

\section{Setting of the problem}\label{S.setting}

In this paper, we restrict attention to stationary periodic solutions of the system \eqref{equations}. The parametrization of the interface is thus time-independent and satisfies 
\begin{equation}\label{eq:periodicity}
z(\alpha + 2\pi) = z(\alpha) + 2\pi.
\end{equation}
Periodicity implies we may restrict attention to one period of $z$, we therefore usually work with
$$
\Ga :=  \{z(\al) \,|\, \alpha\in (-\pi, \pi)\}
$$ 
where we may assume $\Ga\subseteq \Scal_\pi := \{z\in\Cbb  : |\Re z| < \pi  \}$. We further assume $\Ga$ is symmetric with respect to the $y$-axis, i.e.
\begin{equation}\label{eq:sym}
z(-\alpha)= -z(\alpha)^*
\end{equation}
where $^*$ denotes complex conjugation. Finally, we assume there exists a point $z_*\in \Scal_\pi$ such that 
\be\label{eq:splash}
\exists\al_*\in (0, \pi): \qquad z_*=z( \pm\al_*), \qquad \theta(\pm\al_*) = \pm\pi/2
\ee   
but that $\Ga$ is otherwise a chord-arc curve, that is 
\be\label{arcchord}
\Fcal_{\varepsilon}(\Ga) := \sup_{(\al, \beta)\in (B_\varepsilon(\al_*)\times B_\varepsilon(-\al_*))^c}\Fcal(z)(\al, \beta) < \infty
\ee
for any small $\varepsilon>0$, where $\theta$ is the tangent angle and
\be\label{eq:defFcal}
\Fcal(z)(\al, \beta) :=\begin{cases}
                        \frac{|e^{i\al} - e^{i\beta}|}{|z(\al)-z(\beta)|} \quad &\al\neq \beta\\
                        \frac{1}{|z_\al(\al)|}                       \quad &\al = \beta.
                       \end{cases}  
\ee
In particular, $\Ga$ self-intersects and $\Scal_\pi \setminus \Ga$ is a union of three disjoint open sets. The region below $\Ga$ (i.e. the region containing $(-\infty, a]i$ for some $a\in \Rbb$) is connected and we denote it by $\Om$. Its complement $\Om^c:= \Scal_\pi \setminus \overline{\Om}$ is a union of two connected disjoint open sets $\Om_i$ ($i=1,2$) both with an outward cusp and a common tip at $z_*$. More precisely, we have
$$
\Scal_\pi \setminus \overline{\Om} = \Om_1\cup \Om_2, \qquad \overline{\Om}_1\cap \overline{\Om}_2 = \{z_*\}
$$
where we assume $\Om_1$ is bounded and $\Om_2$ unbounded.

Under these conditions, the Euler equations \label{equations} for the $2\pi$-periodic velocity $v$ and the corresponding $2\pi$-periodic pressure $p$ simplify to
\begin{subequations}\label{equations}
\begin{align}
\rho (v \cdot \nabla )v = - \nabla p - g \rho \, e_2 \qquad &\text{in } \ \Scal_\pi\setminus\Ga ,\\
\nabla\cdot v=0  \quad \mathrm{and} \quad \nabla^{\perp}v=0  \qquad &\text{in } \ \Scal_\pi\setminus\Ga, \label{eq:subeq2} \\
v^j \cdot (\pa_\al z)^\perp = 0 \qquad &\mathrm{on } \, \ \Ga, \label{eq:subeq3}\\
p^1 - p^2 = -\sigma K \qquad &\mathrm{on }\, \ \Ga,
\end{align} 
\end{subequations}
where $\si>0$ is the surface tension coefficient, $K$ is the curvature of the interface
$$
K(z) = \frac{z_{\al\al}\cdot z_\al^\perp}{|z_\al|^3},
$$
and we have used superscript $j$ to denote restrictions of $p$ and $v$ to the respective domains $\Om^c$ and $\Om$. Moreover, each fluid is assumed to have constant density, i.e. we have $\rho(z) \equiv \rho_2>0$ when $z\in\Om$, respectively $\rho(z)\equiv \rho_1\geq 0$ for $z\in \Scal_\pi\setminus \Om$.
It remains to specify the behavior of the velocity far from the interface; we impose
\be\label{vinf}
\aligned
v^1 \rightarrow 0 \quad &\mathrm{as} \quad y\rightarrow \infty, \\
v^2 \rightarrow 1 \quad &\mathrm{as} \quad y \rightarrow -\infty,
\endaligned
\ee
uniformly in $x$.

We now briefly recall how to rewrite the system of equations \eqref{equations}-\eqref{vinf} in terms of vorticity $\om := \nabla^\perp \cdot v$ and  parametrization of the interface, for more details cf. \cite{CEG}. The vorticity $\om$ is assumed to be a measure supported on the interface, i.e.
$$
\om (z) = \om(\al)\delta (z - z(\al)),
$$
where we slightly abuse the notation and let $\om$ denote the amplitude of vorticity along $\Ga$ as well. Away from the interface the velocity of the fluid is essentially given by the periodic Birkhoff--Rott integral $BR(z,\om)$; its complex conjugate is given by
$$
BR(z, \om)^* = \frac{1}{4\pi i}\int_{-\pi}^\pi \, \om(\al')\cot\Big(\frac{z-z(\al')}{2}\Big)\,d\al'.
$$
Far from the interface, we have  
$$
BR(z, \om)^* \longrightarrow \mp \frac{1}{4\pi}\int_{-\pi}^\pi \, \om(\al')  d\al' \qquad \Im z \gtrless 0, \quad  |z|\rightarrow \infty,
$$
uniformly, hence in order to satisfy \eqref{vinf}, we require $\int_{-\pi}^\pi \, \om(\al') d\al' = 2\pi$ and add a suitable constant velocity field to the Birkhoff-Rott integral, i.e. away from the interface we set
$$
v(z) = \frac{1}{2} + BR(z, \om).
$$
On the other hand, approaching any point on $\Ga\setminus\{z_*\}$ from inside of $\Om$ (respectively $\Om^c$) in the normal direction, we obtain   
$$
v^2 = \frac{\om}{2z_\al^*} + \frac{1}{2} + BR(z,\om), \qquad v^1 =  - \frac{\om}{2z_\al^*} + \frac{1}{2} + BR(z,\om),
$$
where the integral is now understood in the sense of principal value. In particular, the amplitude of the vorticity measures the jump in the tangential component of the velocity along the interface. 

Following \cite{OS}, we use the hodograph transform with respect to the lower fluid in order to transform the free boundary problem into a problem on a fixed domain. In fact, equations \eqref{eq:subeq2} imply we can write $v$ in terms of the flow potential $\phi$ or the stream function $\psi$ via 
$$
v = \nabla \phi = \nabla^\perp \psi.
$$
By definition, $\phi + i\psi$ is analytic on $\Om$ and can be shown to be a conformal bijection onto $\Scal_\pi \cap \Cbb_-$ extending as a homeomorphism up to the boundary, cf. \cite{CEG}. In particular, we can use
$$
\phi + i\psi:\Om\longrightarrow \Scal_\pi \cap \Cbb_-
$$
as an independent variable in place of $z = x+iy$. The parametrization is fixed by requiring 
$$
\phi(z(\al)) = \al, \quad \psi(z(\al)) = 0,
$$
which implies a simple relation between the velocity of the lower fluid and the tangent vector on the interface
$$
v^2\cdot z_\al = \nabla \phi\cdot z_\al = 1 \quad  \Rightarrow \quad 2BR(z,\om)\cdot \pa_\al z + \om = 2 - z_{1\al}.
$$
The problem can be further simplified by writing the velocity vector in terms of polar coordinates 
$$
\pa_\al z = |\pa_\al z|e^{i\theta} = e^{if},\qquad f= \theta + i \tau : \Scal_\pi \cap \Cbb_- \rightarrow \Cbb,
$$
where $f$ is analytic and continuous up to the boundary. Since $\theta$ and $\tau$ are $2\pi$-periodic conjugate
functions, they must be related by the periodic Hilbert transform, i.e. $\tau = H\theta$ and we can take $\theta$ to be our main unknown. The parametrization $z$ is then a function of $\theta$ via the integral operator 
\be
\label{zoftheta}
z(\alpha) = I(\theta)(\al) := -\pi + \int_{-\pi}^\al e^{-H\theta(\al ') + i \theta (\al')} d\al'.
\ee
As shown in \cite{CEG}, the system \eqref{equations}-\eqref{vinf} is then equivalent to the following problem: 
\begin{reform}\label{ProbA}
Find $2\pi$-periodic functions $\theta(\al)$ and $\om(\al)$, such that
$\theta$ is odd, $\om$ is even and they satisfy 
\begin{subequations}
\label{eq:BR1}
\begin{align}
 q \Big(1 + \frac{\epsilon}{2}\Big)\frac{d\theta}{d\al} + \sinh
   H\theta + g e^{-H\theta}\, \Imag I(\theta) - \ka e^{-H\theta} - \frac{\epsilon}{4}e^{H\theta}\om (\om - 2) &= 0, \label{eq:bernoulli}\\
2BR(z,\om)\cdot \pa_\al z + \om &= 2 - z_{1\al},  \label{eq:vorticity}
\end{align}
\end{subequations}
where $z := I(\theta)$ is defined by \eqref{zoftheta}. 
\end{reform}
The above problem depends on four constants $q, \ka, \ep$ and $g$, where $g$ represents the gravity, $\ka$ is the integration constant of the Bernoulli equation, $q$ is related to the surface tension coefficient via $q:= \frac{\si}{\rho_2}$, while 
$$
\ep := \frac{2\rho_1}{\rho_2-\rho_1}
$$ 
detects the presence of the upper fluid. Setting $\epsilon$ to zero, the equations decouple and we recover the capillary-gravity wave
problem as studied in \cite{AAW}. If, in
addition, we set $g = 0$, we recover the pure capillary waves problem
as formulated by Levi-Civita (see e.g.\ \cite{OS}), namely,

\begin{reform}\label{ProbC}
Find a $2\pi$-periodic, analytic function  $f = \theta + i \tau$ on the lower half-plane that satisfies 
\be
\label{ref:2}
q \frac{d\theta}{d\al} =  -\sinh H\theta
\ee
on the boundary and tends to zero at infinity. 
\end{reform}

This problem admits a family of exact solutions depending on the parameter $q$. More precisely, the family of analytic functions    
\be
\label{ref:3}
f_A(w) := 2i \log \frac{1 + Ae^{-iw}}{1 - Ae^{-iw}} 
\ee
has all the required properties, with parameter $A$ depending on $q$ via 
$$
\quad q = \frac{1 + A^2}{1-A^2},
$$
cf. Crapper \cite{crapper}. It suffices to consider $A\geq 0$, since the transformation $A
\mapsto -A$  corresponds to a translation $\al \rightarrow \al +
\pi$. The corresponding wave profiles are given by  
\be\label{eq:crapper}
z_A(\alpha) = \alpha + \frac{4i}{1 + Ae^{-i\al}} - 4i.
\ee
where the constant of integration has been chosen to ensure $z_A (\al)=\al$ for $A=0$. For sufficiently large values of parameter $A$ these solutions
can no longer be represented as a graph of a function, and eventually
self-intersect. It is not hard to see that the curve $z_A$ does not have self-intersections if and only if
 $$
 A< A_0 \approx 0.45467.
 $$
For $A=A_0$, the curve $z_A(\al)$ exhibits a splash-singularity, while for $A$
slightly larger than $A_0$ the curve intersects at exactly two points,
and the intersection is transverse.

\begin{section}{Estimates on the singular integral operators}\label{S.estimates}

In this section, we show boundedness of singular integral operators on the appropriate weighted Sobolev spaces. For convenience, we study these integrals as defined over a closed curve. More precisely, we temporarily assume the interface is a closed bounded curve, e.g. we consider the image of $\Ga$ under the exponential map $P(z) = -e^{-iz}$, which, to simplify notation, we continue denoting $\Ga$. After a translation, we may assume the singular point is at the origin, i.e. $z_* = 0$. 

As for the regularity, we assume $\Ga$ is of class $\Ccal^{2,\lambda}$ everywhere except possibly at the singular point, where it is at least $\Ccal^{1,\lambda}$ for some possibly different $\lambda\in (0,1)$. The assumptions \eqref{eq:splash}--\eqref{arcchord} then imply there exists a neighborhood $B(z_*)$ of the singular point such that $B(z_*)\cap \Ga$ consists of exactly two connected components $\Ga^\pm$ which can be parametrized as a graph. More precisely, we have $\Ga^+ \cup \Ga^- = B(z_*)\cap \Ga$ with
$$
\Ga^\pm =\big\{ x \pm i\ka(x) \,:\, |x|<\de \big\},
$$ 
where $ \ka'(0) = \ka(0) = 0$ and $\ka''$ is H\"{o}lder continuous on $[-\de,\de]\setminus\{0\}$. Near the singular point, we assume 
\be\label{reg_assum}
|x|^{1-\mu}\ka''(x)\in H^1([-\de, \de]), \ \quad \ \lim_{x\rightarrow 0} |x|^{1-\mu}\ka''(x) = k>0
\ee
for some $\mu \in (0,1]$. In particular, the above implies  
\be\label{reg_implication}
\mu |x|^{-\mu}\ka'(x) \rightarrow k, \qquad \mu(\mu+1)|x|^{-(1+\mu)}\ka(x) \rightarrow k,
\ee
By possibly making $\de$ smaller, we may assume that $\rho(x):=2\ka(x)$ is strictly monotonically increasing on $[0,\de]$ respectively decreasing on $[-\de, 0]$. Here, we will only need $\mu = 1$, however, for later reference, we prove some results for general $\mu$.

We finish with some notation. In general, we append subscript $\pm$ to a point $z\in\Ga$ whenever it's an element of $\Ga^\pm$ in the graph parametrization, e.g. 
$$
z_\pm = x \pm i\ka(x) \in\Ga^\pm.
$$ 
Moreover, for fixed $0<x<\de$ we split $[-\de, \de]$ into three parts
$$
I_l(x):=[-x,(1-\epsilon)x], \quad I_c(x):=[(1-\epsilon)x, (1+\epsilon)x], \quad I_r(x):=[-\de, -x]\cup [(1+\epsilon)x, \de]
$$ 
for some small $\epsilon>0$ and denote the corresponding part of $\Ga^\pm$ by $\Ga^\pm_l(x)$, $\Ga^\pm_c(x)$ and $\Ga^\pm_r(x)$ respectively. More precisely, we have
$$
\Ga^\pm_{l, c, r}(x) := \{ \, q \in \Ga^\pm \, : \,  u \in I_{l, c, r}(x)  \}.
$$
Finally, given two (positive) quantities $f,g$, we define
$$
f \lesssim g :\Leftrightarrow \quad \exists c>0: \ f \leq c g
$$
and similarly
$$
f \sim g\, :\Leftrightarrow \quad \exists c>0: \ c^{-1} g \leq f \leq c g  .
$$

\begin{subsection}{Function spaces}

For $\beta\in \Rbb$, we define the weighted Lebesgue space with respect to the power weight $|q|^\beta$ as 
$$
\Lcal_{p_,\beta}(\Ga) := \{\phi: \Ga \rightarrow \Rbb \,\,|\,\, |q|^\beta |\phi(q)| \in L^p(\Ga) \},
$$
endowed with the norm
$$
\|\phi\|_{p,\beta}^p = \int_\Ga |q|^{\beta p}|\phi(q)|^p ds_q.
$$
If  $0 < \beta + p^{-1}<1$, then $|q|^\beta$ is a M\"{u}ckenhaupt weight and $\Lcal_{p_,\beta}(\Ga) \subseteq L^1(\Ga)$. We will also need the weighted Sobolev space  
$$
\phi \in \Wcal^{1,p}_\beta(\Ga) \ \ :\Longleftrightarrow \ \ \phi,\, \frac{d\phi}{ds} \in \Lcal_{p_,\beta}(\Ga)
$$
where $d/ds$ denotes the derivative with respect to the arc-length parametrization of $\Ga$. If $0 < \beta + p^{-1}<1$, then any $\phi \in \Wcal^{1,p}_\beta(\Ga)$ is necessarily continuous and $\phi - \phi(0) \in \Lcal_{p, \beta - 1}(\Ga)$.
For $\lambda \in \Rbb$, we define $\Ccal_\lambda(\Ga)$, the space of weighted continuous functions, to be 
$$
\Ccal_\lambda(\Ga) := \{\phi: \Ga \rightarrow \Rbb \,\,|\,\, q^\lambda \phi(q) \in \Ccal^0\big(\Ga\big) \}
$$
endowed with the norm
$$
\|\phi\|_\lambda := \sup_{q\in\Ga} \, |q|^\lambda |\phi(q)|.
$$
Generalization to $k$-times differentiable functions is straightforward; e.g. the weighted space $\Ccal^2_{\lambda}(\Ga)$ of two times continuously differentiable functions is defined to be 
$$
\phi \in \Ccal^2_{\lambda}(\Ga) \ \ :\Longleftrightarrow \ \ \frac{d^2\!\phi}{ds^2}\in \Ccal_{\lambda}(\Ga),\ \frac{d\phi}{ds}\in \Ccal_{\lambda-1}(\Ga),\ \phi\in \Ccal_{\lambda-2}(\Ga).
$$
Note, the assumptions \eqref{reg_assum} imply $\ka \in \Ccal^2_{1-\mu}([-\de,\de])$. 
\end{subsection}
\begin{subsection}{Regularity properties of the Birkhoff-Rott integral}

Here, we prove boundedness of singular integral operators on appropriate weighted Sobolev spaces and give some compactness results. To simplify notation, we use symmetry assumptions \eqref{eq:sym} for the interface and assume $\om$ is either even or odd w.r.t. $x$-axis, however everything works in exactly the same way without any symmetry assumptions. 

\begin{proposition}\label{prop:BR}
Let $1-\mu < \beta + p^{-1}< \mu$. The complex-valued linear operator 
\begin{equation}\label{def:BR}
BR(z, \cdot)^*: \Wcal^{1,p}_{\beta}(\Ga)\longrightarrow \Wcal^{1,p}_{\beta}(\Ga) 
\end{equation} 
is bounded. The same is true for $BR(z, \cdot)^*z'$.
\end{proposition}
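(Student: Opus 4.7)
The strategy is to reduce $BR(z,\cdot)^*$ to a Cauchy-type singular integral on $\Ga$ and split both the kernel and the domain of integration according to whether we are near or far from the splash point $z_*=0$, and within the near region, whether source and target lie on the same branch or on opposite branches.

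First I would use the expansion $\cot(w/2)=2/w+h(w)$ with $h$ analytic near the origin, reducing the operator (modulo a smoothing remainder bounded on $\Wcal^{1,p}_\beta$ by a Schur-type estimate using \eqref{arcchord}) to the Cauchy integral $T\om(z)=\tfrac{1}{2\pi i}\,\mathrm{p.v.}\int_\Ga \om(q)/(z-q)\,dq$. A smooth cutoff $\chi$ localized at $z_*$ then separates the problem into a \emph{bulk} part and a \emph{local} part. On the bulk piece $\Ga$ is smooth and chord-arc, the weight $|q|^{\beta p}$ is two-sided bounded, and the Coifman--McIntosh--Meyer $L^p$-bound for the Cauchy integral on $\Ccal^{2,\lambda}$ curves, together with the standard commutator estimate for $d/ds$, yields the $\Wcal^{1,p}_\beta$ bound on this piece.

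Near $z_*$ I would write $\Ga=\Ga^+\cup\Ga^-$ and split into the four source/target branch combinations. Each same-branch operator is the Cauchy integral on the $\Ccal^{1,\lambda}$ graph $\Ga^\pm$; since the hypothesis $1-\mu<\beta+p^{-1}<\mu$ (with $\mu\le 1$) forces $0<\beta+p^{-1}<1$, the weight $|x|^{\beta p}$ falls in the one-dimensional Muckenhoupt class and standard weighted Cauchy-integral bounds on Lipschitz graphs apply. The main obstacle is the cross-branch piece: for $z=z_+(x)$ and $q=z_-(x')$ one has $z_+(x)-z_-(x')=(x-x')+i(\ka(x)+\ka(x'))$, whose modulus degenerates to order $\ka(x)\sim|x|^{1+\mu}$ on $I_c(x)$, producing an $x^{-1-\mu}$ kernel blow-up that the unweighted theory does not see.

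To absorb this, decompose the $x'$-integration into $I_l(x), I_c(x), I_r(x)$. On $I_l\cup I_r$ one has $|z_+(x)-z_-(x')|\gtrsim |x-x'|$, and the resulting bound reduces to a weighted Hilbert transform estimate on $[-\de,\de]$, again in the Muckenhoupt range. On $I_c(x)$ I would subtract $\om(0)$: the constant part yields the integral $\om(0)\int_{I_c(x)}dx'/(z_+(x)-z_-(x'))$, which is uniformly bounded by evaluating the logarithmic antiderivative and exploiting the cancellation between the two endpoints of $I_c(x)$. For the remainder $\om(q)-\om(0)$ the pointwise bound $|\om(q)-\om(0)|\le\int_0^{|q|}|d\om/ds|\,ds$ combined with the weighted Hardy inequality recalled in the appendix---valid precisely when $\beta+p^{-1}>1-\mu$---absorbs the $x^{-1-\mu}$ kernel against $d\om/ds$ and delivers the $\Lcal_{p,\beta}$ bound. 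For the derivative, integration by parts transfers $d/ds$ off the kernel and onto $\om$, giving $T(d\om/ds)$ plus commutator terms that are controlled by the same decomposition; the additional claim about $BR(z,\cdot)^*z'$ follows because $z'(\al)=e^{-H\theta+i\theta}$ acts as a bounded multiplier on $\Wcal^{1,p}_\beta$ by the regularity of $\theta$.
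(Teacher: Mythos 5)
Your proposal tracks the paper's decomposition closely for the far field, the same-branch central region, and the $I_l,I_r$ pieces, and your computation that $\om(0)\int_{I_c(x)}dx'/(z_+(x)-z_-(x'))$ is uniformly bounded (via the logarithmic antiderivative) is correct. The gap is in the central cross-branch piece. Your estimate there subtracts the pointwise value $\om(0)$ and then controls $\om(q)-\om(0)$ by $\int_0^{|q|}|d\om/ds|\,ds$, so it only yields a bound of the form $\|T\om\|_{\Lcal_{p,\beta}}\lesssim\|\om\|_{\Wcal^{1,p}_\beta}$: you use both the value at $z_*$ and the derivative, and you never establish an estimate $\Lcal_{p,\beta}\to\Lcal_{p,\beta}$ for bare $\Lcal_{p,\beta}$ data. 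This matters precisely at the derivative step. After integrating by parts you must control $BR(\phi,z)^*$ with $\phi=(\om/q')'\,q'=\om'-\om\,q''/q'$, which lies only in $\Lcal_{p,\beta}$; such $\phi$ has no well-defined value at $z_*$, so ``the same decomposition'' cannot be applied to it. Without the subtraction, a crude H\"older bound on $I_c(x)$ fails: the double integral produces a factor $\int_{u/(1+\epsilon)}^{u/(1-\epsilon)} x^{-1-\mu}\,dx\sim u^{-\mu}$ that destroys the $\Lcal_{p,\beta}$ estimate, so some cancellation in the kernel, not in the data, must be exploited.

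This is exactly where the paper departs from your route. It makes the change of variables $u=h(\tau)$, $h'(\tau)=-\rho(h(\tau))$, which straightens the cusp so that the cross-branch kernel on $I_c(x)$ becomes $1/((\xi-\tau)-i)$ up to an error $O(1/\tau)$; the kernel $1/((\xi-\tau)-i)$ is a bounded convolution operator on $L^p$, and Lemma~\ref{lem:aux1} (combined with the $(\xi/\tau)^M$ trick to move into the Muckenhoupt range) upgrades this to the weighted spaces. Because that argument places no pointwise requirement on the integrand, the resulting $\Lcal_{p,\beta}\to\Lcal_{p,\beta}$ bound can legitimately be fed the merely-$\Lcal_{p,\beta}$ data arising from integration by parts, which is what closes the $\Wcal^{1,p}_\beta\to\Wcal^{1,p}_\beta$ estimate. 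You would need to replace your $\om(0)$-subtraction by a device of this kind before the derivative step of your proof can go through.
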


\begin{proof} For convenience, we will work with the standard form of the Birkhoff-Rott integral, which in complex notation with respect to e.g. the arc-length parametrization reads  
$$
 BR(\om,z)^*z' = \frac{1}{2\pi i}\, p.v.\int_\Ga \om(q)\bigg(\frac{z'}{z-q}\bigg)\,ds_q,
$$
where $^*$ means complex conjugation. This is justified since the difference of periodic and line kernels is bounded and  can easily be estimated in $\Lcal_{2,\beta}(\Ga)$ using Hardy inequalities (we have $\cot(w) = 1/w + O(w)$ in the neighborhood of the origin). 

We first show $BR(\om,z)^*z'\in \Lcal_{p,\beta}(\Ga)$, when $\om \in \Lcal_{p,\beta}(\Ga)$. Since the interface is a chord-arc curve outside of any neighborhood of the splash point (cf. \eqref{arcchord}), we may assume $z = z_+ \in \Ga^+$ with positive real part strictly smaller than say $\de/2$. In the graph parametrization, this gives $z_+ = x+i\ka(x)$ with $0<x<\de/2$. Moreover, it is enough to consider $q= q_\pm\in \Ga^\pm$, i.e. $q_\pm = u \pm i\ka(u)$ with $|u|\leq \de$. 

Let first $ q\in \Ga^\pm_l(x)$ or $q\in \Ga^\pm_r(x)$. Then we have the estimate  
\be\label{est:lrbasic}
\frac{1}{|z_+-q_\pm|} \leq \,\, \frac{1}{|x-u|} \lesssim  \,\, \begin{cases}
                                      \frac{1}{x}, &\quad u\in I_l(x), \\
                                      \frac{1}{u}, &\quad u\in I_r(x)
                                     \end{cases}
\ee
and the corresponding integrals are bounded in $\Lcal_{p, \beta}$ by Hardy's inequality. More precisely, 
$$
\bigg|\int_{\Ga^\pm_l(x)} \om(q)\, \bigg(\frac{z'_+}{z_+-q}\bigg)\,ds_q\bigg| \, \lesssim \, \frac{1}{ x}\int_0^{x} |\om(u)| + |\om(-u)| \, du
$$
belongs to $\Lcal_{p,\beta}$ by the first part of Lemma \ref{lem:hardy} with $\ga := \beta - 1$ and $\lambda := \beta$, while
$$
\bigg|\int_{\Ga^\pm_r(x)} \om(q)\, \bigg(\frac{z'_+}{z_+-q}\bigg)\,ds_q\bigg| \, \lesssim \, \int_{x}^{\de}\frac{|\om(u)| + |\om(-u)|}{u} \, du,
$$
belongs to $\Lcal_{p,\beta}$ by the second part of Lemma \ref{lem:hardy} with $\ga := \beta + 1$ and $\lambda := \beta$. 

Let now $q\in \Ga^+_c(x)$. A short calculation and the mean value theorem give 
\be\label{eq:eqga+}
\frac{z_+'}{z_+ - q_+} = \frac{1}{x-u} + \underbrace{\frac{i}{x-u}\bigg(\ka'(x) - \frac{\ka(x) - \ka(u)}{x-u}\bigg)}_{\text{$\frac{i\ka_+''(\xi)}{2}$ for some $\xi\in [u,x]$}}\underbrace{\frac{1}{1 + i \frac{\ka(x) - \ka(u)}{x-u}}}_{\text{$\frac{1}{1 + i\ka'_+(\eta)}$; $\eta\in[u,x]$}},
\ee
where we have set $[u,x]:= [\min\{u,x\},\max\{u,x\}]$. In particular, since $u$ and $x$ are comparable and $\|\ka\|_{\Ccal^2_{1-\mu}}<\infty$, we have
\be\label{eq:ga+}
\frac{z'_+}{z_+-q_+} = \frac{1}{x-u} + O( x^{\mu - 1})
\ee
and the integral over the error term can be treated by the Hardy's inequality as was done for $\Ga^\pm_l(x)$. On the other hand, the Hilbert transform is bounded on $\Lcal_{p,\beta}$ as long as $|x|^\beta$ defines a M\"{u}ckenhaupt weight. 

Finally, let $q\in \Ga^-_c(x)$. Then, setting $\t z = x + i \ka(u)$ we have
\be\label{eq:eqga-}
\frac{z_+'}{z_+ - q_-} = \frac{1}{\t z - q_-} + \frac{i\ka'(x)}{\t z - q_-} + \underbrace{iz_+' \frac{\ka(u)-\ka(x)}{(z_+ - q_-)(\t z - q_-)}}_{|\cdot|\, \lesssim \, \frac{|\ka'(\xi)|}{|\t z - q_-|}\text{ for some $\xi\in [u,x]$} }.
\ee
In particular, since $\sup_x|x^{-\mu}\ka'(x)|<\infty$ we have
\be\label{eq:ga-}
  \frac{z_+'}{z_+ - q_-} = \frac{1}{\t z -q_-} + O\bigg( \frac{x^{\mu}}{|\t z - q_-|}\bigg),
\ee
with the error term of order $O(x^{-1})$ which can therefore be treated by the Hardy's inequality as before. For the remaining term, we employ the variable change 
$$
u=h(\tau), \, h'(\tau) = -\rho(h(\tau))
$$ 
on the interval $[0, \de]$ (cf. Appendix \ref{S.appendix}). We have
$$
\aligned
\int_0^{\de/2} x^{\beta p} \bigg|\int_{(1-\epsilon)x}^{(1+\epsilon)x} \om(u)\,&\frac{du}{(x-u) + i\rho(u)}\bigg|^p  \,dx \, \lesssim\\
&\int_0^\infty \underbrace{h(\xi)^{\beta p} |h'(\xi)|}_{\sim \xi^{-\al p}}\bigg|\int^{\ga^+(\xi)}_{\ga^-(\xi)}\om(h(\tau)) \underbrace{ \frac{h'(\tau)}{(h(\xi)-h(\tau)) - ih'(\tau)} }_{=:k(\xi, \tau)}\,d\tau \bigg|^p\,d\xi 
\endaligned
$$
where we have set
$$
\al:=\mu^{-1}(\beta + p^{-1}) + p^{-1}
$$ 
and $\ga^\pm(\xi) = h^{-1}((1\mp\epsilon)x)$ are comparable, i.e. the corresponding interval is contained in $I_{\t \epsilon}(\xi):= \{\tau\in \Rbb_+ \, : \, |\tau - \xi| < \t \epsilon \,\xi\}$ for some $\t \epsilon > 0$. Setting 
$$
F(\xi, \tau) := \frac{h(\xi) - h(\tau)}{\xi - \tau}
$$
we isolate the singular part of the kernel $k(\xi, \tau)$ via 
$$
k(\xi, \tau) - \frac{1}{(\xi-\tau) - i} = \underbrace{i\bigg( \frac{h'(\tau)}{F(\xi,\tau)} - 1\bigg)\frac{\frac{h'(\tau)}{F(\xi,\tau)} }{(\xi - \tau)-i\frac{h'(\tau)}{F(\xi,\tau)} }\frac{1}{(\xi - \tau) - i}}_{|\,\cdot\,|\, \lesssim \, \frac{1}{\tau}}
$$
where we have used 
$$
\frac{1}{(\xi -\tau)}\bigg(\frac{h'(\tau)}{F(\xi,\tau)} - 1\bigg) =-\frac{F_\tau(\xi ,\tau)}{F(\xi, \tau)}
$$
which for any $|\tau - \xi |\leq \t\epsilon\, \xi$ satisfies 
$$
\frac{F_\tau(\xi ,\tau)}{F(\xi, \tau)} \sim \frac{h''(\tau)}{h'(\tau)} \sim \frac{1}{\tau}.
$$
In particular,
$$
\aligned
\int_0^\infty \xi^{-\al p}&\bigg|\int_{I_{\t\epsilon}(\xi)}\om(\tau)\,k(\xi, \tau) d\tau \bigg|^p\,d\xi \, \lesssim \, \\
&\int_0^\infty \xi^{-\al p}\bigg|\int_{I_{\t\epsilon}(\xi)}\om(\tau)\,\frac{1}{(\xi -\tau) - i} \, d\tau \bigg|^p d\xi + \int_0^\infty \xi^{-\al p}\bigg(\int_{I_{\t\epsilon}(\xi)}\frac{|\om(\tau)|}{\tau}d\tau \bigg)^p d\xi,
\endaligned
$$
where we have set $\om(\tau):= \om(h(\tau))$. The second term is bounded by the Hardy's inequality, i.e. by part \ref{itm:hardy1} of Lemma \ref{lem:hardy}, since 
$$
(1-\al) + p^{-1} <1-\mu^{-1}(\beta + p^{-1}) <1
$$
and 
$$
\om\circ h\in \Lcal_{p,-\al}(\Rbb_+)\quad  \Leftrightarrow \quad \om\in\Lcal_{p,\beta}\big([0,\de]\big).
$$ 
To show the first term is controlled by $\|\om\circ h\|_{-\al, p}$, we want to apply Lemma \ref{lem:aux1}, but $-\al$ does not satisfy the required assumptions, since $-\al + p^{-1} < 0$. However, there exists a positive real number $M > 0$ such that  
$$
0<(M -\al) + p^{-1} < 1
$$
i.e. $(M - \al)$ is a Muckenhaupt weight. Note that we are allowed to put $(\xi/\tau)^M$ in the inner integral since $\xi$ and $\tau$ are comparable. Moreover, we have $\frac{\om(\tau)}{\tau^M}\in \Lcal_{p, M-\al}(\Rbb)$. In particular, the claim follows from Lemma \ref{lem:aux1} with $\ga = M - \al$.  

Let now $\om\in\Wcal^{1,p}_{\beta}$ and assume for simplicity the interface is parametrized w.r.t. the arc-length parametrization. We need to show 
$$
\big(BR(\om,z)^*z'\big)'\in \Lcal_{p,\beta}(\Ga).
$$
In fact, it is enough to show the derivative of $BR(z,\om)^*$ belongs to $\Lcal_{p,\beta}$. Then, by the first part of the proof we have $BR(z,\om)^*\in \Wcal^{1,p}_{\beta}$ and in particular $BR(z,\om)^*\in L^\infty$, while the condition $1-\mu < \beta + p^{-1}$ implies
$$
\Ccal_{1-\mu}(\Ga)\hookrightarrow \Lcal_{p,\beta}(\Ga)
$$  
that is, we have $z''\in \Lcal_{p,\beta}$. Since the interface is twice continuously differentiable except possibly at the splash point and the cusps are 'smoothly' connected, for any $z\neq z_*$ integration by parts gives as usual  
$$
\frac{d}{ds}BR(z,\om)^* =  \frac{1}{2\pi i}\int_\Ga \bigg(\frac{\om(q)}{q'}\bigg)'\bigg(\frac{z'}{z-q}\bigg)\,ds_q. 
$$
However, this is bounded in $\Lcal_{p,\beta}$ by the first part of the proof, since we have
$$
\bigg(\frac{\om(q)}{q'}\bigg)' = \frac{\om'(q) - \om(q)q''/q'}{q'}\in \Lcal_{p,\beta}
$$
(in the arc-length parametrization we have $|q''/q'|\sim |\theta'|\in \Ccal_{1-\mu}(\Ga)$, where $\theta$ denotes the tangent angle).

\end{proof}

Taking the real part of the above complex product, we recover the tangential part of the Birkhoff-Rott integral. In what follows, we denote this operator by $S$, i.e. 
$$
S\om :=  2\pi BR(z, \om)\cdot z' = 2\pi \Re\big(BR(\om,z)^*z'\big) = p.v.\int_\Ga \om(q) \, \pa_{n_z}\log|z - q|^{-1} ds_q.
$$
We will also need the double-layer potential operator   
$$
T\om(z) := p.v.\int_\Ga \om(q) \, \pa_{n_q}\log|z - q|^{-1} ds_q = - 2\pi \Re\bigg(\frac{1}{2\pi i} \int_\Ga \om(q)\frac{dq}{z-q}\bigg).
$$
\begin{proposition}\label{prop:doublelayer}
Let $0<\beta + p^{-1}<1$. Then, the operator 
$$
T: \Wcal^{1,p}_\beta(\Ga) \rightarrow \Wcal^{1,p}_\beta(\Ga) \\
$$
is bounded.
\end{proposition}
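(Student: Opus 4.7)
The plan is to mirror the strategy used in the proof of Proposition~\ref{prop:BR}, replacing the numerator $z'$ of the Birkhoff--Rott kernel by $q'$ and keeping only the real part of the resulting complex integral. As before, the periodic kernel $\cot((z-q)/2)$ and the line kernel $2/(z-q)$ differ by a smooth correction controlled by Hardy's inequality, so it is enough to work with $T\om=-\Re\bigl(\frac{1}{\pi i}\int_\Ga \om(q)\, q'\, ds_q/(z-q)\bigr)$ and to bound both $T\om$ and its arc-length derivative in $\Lcal_{p,\beta}(\Ga)$.

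Fix $z=z_+=x+i\ka(x)\in\Ga^+$ with $0<x<\de/2$ and split the integration domain into $\Ga^\pm_l(x)$, $\Ga^+_c(x)$, $\Ga^-_c(x)$ and $\Ga^\pm_r(x)$. On the chord-arc far pieces $\Ga^\pm_l(x)\cup\Ga^\pm_r(x)$, the real part of $q'/(z_+-q_\pm)$ is dominated pointwise by $|x-u|^{-1}$, and estimate~\eqref{est:lrbasic} combined with Lemma~\ref{lem:hardy} yields the $\Lcal_{p,\beta}$ bound in the full Muckenhaupt range $0<\beta+p^{-1}<1$. On the same-branch cusp piece $\Ga^+_c(x)$, the expansion~\eqref{eq:eqga+} applied to $q'_+/(z_+-q_+)$ yields a principal-value term $1/(x-u)$ --- whose real part is the ordinary Hilbert transform --- plus an $O(|x|^{\mu-1})$ error absorbed by Hardy; both are bounded on $\Lcal_{p,\beta}$ in the Muckenhaupt range.

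The hard part will be the opposite-branch cusp piece $\Ga^-_c(x)$. Here I would follow the decomposition~\eqref{eq:eqga-}--\eqref{eq:ga-} and write
\[
\frac{q'_-}{z_+-q_-} \;=\; \frac{1}{\t z - q_-} \;+\; O\!\left(\frac{|x|^{\mu}}{|\t z - q_-|}\right), \qquad \t z = x + i\ka(u),
\]
with error of size $O(|x|^{-1})$ on $\Ga^-_c(x)$ controlled by Hardy. The principal term is brought to the real line by the change of variables $u=h(\tau)$ with $h'=-\rho\circ h$, after which the kernel decomposes, exactly as in Proposition~\ref{prop:BR}, into the convolution kernel $1/((\xi-\tau)-i)$ plus an $O(\tau^{-1})$ remainder; the remainder is Hardy-controlled, and the convolution is bounded on $\Lcal_{p,-\al}(\Rbb_+)$ by Lemma~\ref{lem:aux1} after a small shift $M>0$ of the Muckenhaupt exponent.

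For the arc-length derivative of $T\om$ I would integrate by parts in~$q$ on the closed curve. The Cauchy--Riemann identities for the harmonic function $\log|z-q|^{-1}$ and its conjugate $\arg(z-q)$ allow one to move the derivative onto~$\om$, producing an expression of the form $\operatorname{const}\cdot \Re\bigl(z'\int_\Ga(\om'/q')\,q'\,ds_q/(z-q)\bigr)$, i.e.\ a Cauchy-type singular integral of the same kind applied to the $\Lcal_{p,\beta}$ density $\om'/q' = \om' e^{-i\theta}$. Splitting this into real and imaginary parts and invoking the bound just established for $T$ and for its harmonic conjugate (which, for $\mu=1$ --- the case of interest here --- is also bounded on $\Lcal_{p,\beta}$ in the Muckenhaupt range by Proposition~\ref{prop:BR}) closes the derivative estimate. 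The delicate point throughout remains, as in Proposition~\ref{prop:BR}, the careful reduction of the pinched geometric kernel on $\Ga^-_c(x)$ to a Muckenhaupt-bounded convolution via the $h$-substitution.
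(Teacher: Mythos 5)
The paper disposes of this proposition in three lines by differentiating $T\om$ in arc length and integrating by parts on the closed curve to obtain the identity
$$
(T\om)' \;=\; -\,S\om',
$$
after which one simply reads off the $\Lcal_{p,\beta}\to\Lcal_{p,\beta}$ bound for $S$ already established in the proof of Proposition~\ref{prop:BR}. You do not identify this reduction. Instead you redo the entire case analysis of Proposition~\ref{prop:BR} from scratch for the kernel with $q'$ in the numerator (the splits over $\Ga^\pm_{l},\Ga^\pm_c,\Ga^\pm_r$, the $h$-substitution, Lemma~\ref{lem:aux1}, etc.), and then for the derivative you perform the same integration by parts but stop short of recognizing that the resulting expression $\Re\!\bigl(z'\int_\Ga \om'(q)\,ds_q/(z-q)\bigr)$ is exactly $S\om'$; you instead recast it as ``$T$ and its harmonic conjugate applied to $\om'/q'$.'' That recasting is not wrong --- writing $z'=\Re z'+i\Im z'$ does turn $S\om'$ into a bounded combination of the real and imaginary parts of the Cauchy integral with $q'$ in the numerator --- but it obscures the mechanism and forces you to justify boundedness of the imaginary part separately, which your first part does not explicitly cover. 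The approach is therefore correct in spirit but considerably longer than necessary, and it is essentially a rediscovery of the same identity in disguise.

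One genuine shortcoming: you restrict the derivative step to $\mu=1$ ``the case of interest here,'' appealing to Proposition~\ref{prop:BR} whose stated range $1-\mu<\beta+p^{-1}<\mu$ then matches the Muckenhaupt range. But Proposition~\ref{prop:doublelayer} is stated for $0<\beta+p^{-1}<1$ and arbitrary $\mu\in(0,1]$; the whole reason $T$ enjoys the wider range is precisely that $(T\om)'=-S\om'$ involves \emph{no} $z''$ or $q''$ term, whereas the constraint $1-\mu<\beta+p^{-1}$ in Proposition~\ref{prop:BR} is needed only to put $z''\in\Ccal_{1-\mu}$ into $\Lcal_{p,\beta}$. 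By not isolating this point you fail to explain why $T$ is bounded on the full Muckenhaupt range for general $\mu$, and your proof as written only covers $\mu=1$.
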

\begin{proof}
Given $\om\in \Wcal^{1,p}_\beta(\Ga)$ this operator satisfies
$$
(T\om)' = -S\om'
$$
hence proof of Proposition \ref{prop:BR} implies it is bounded on $\Wcal^{1,p}_\beta(\Ga)$. 
\end{proof}

We finish this section with a compactness result for the difference $S-T$. 

\begin{proposition}\label{prop:compact}
Let $1-\mu< \beta + p^{-1} < \mu$. Then,  
\begin{align}
S-T:& \Lcal_{p,\beta}(\Ga)\longrightarrow \Lcal_{p,\beta}(\Ga) \label{eq:ST} \\  
S-T:& \Wcal^{1,p}_\beta(\Ga)\longrightarrow \Wcal^{1,p}_\beta(\Ga) \label{eq:STder}
\end{align}
are compact.
\end{proposition}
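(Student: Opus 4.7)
The kernel of $S-T$ is
$$
K(z,q) = \pa_{n_z}\log|z-q|^{-1} - \pa_{n_q}\log|z-q|^{-1} = -\frac{(n_z+n_q)\cdot(z-q)}{|z-q|^2},
$$
and the principle underlying the proof is that, thanks to the cancellation between the two double-layer-type kernels, $K$ is pointwise much better behaved than the kernels of $S$ and $T$ separately, even across the splash singularity. My plan is to first establish sharp pointwise bounds on $K$, then use them to prove \eqref{eq:ST} by an approximation-by-compact-operators argument, and finally deduce \eqref{eq:STder} via the commutator identity of Proposition~\ref{prop:doublelayer}.

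\textbf{Bounds on $K$.} Away from the splash point the curve is $\Ccal^{2,\lambda}$, so a standard Taylor expansion gives $(z-q)\cdot n_z = O(|z-q|^2)$ and $(z-q)\cdot n_q = O(|z-q|^2)$; hence $K$ is bounded, in fact $\lambda$-H\"older continuous, jointly in $(z,q)$ on every compact subset of $\Ga\times\Ga$ that avoids $\{z=q\}\cup\{z_*\}$. The delicate case is $z=z_+\in\Ga^+$ and $q=q_-\in\Ga^-$ both close to $z_*$. In the graph parametrization $z_+=x+i\ka(x)$, $q_-=u-i\ka(u)$, one computes $n_{z_+}+n_{q_-}=O(|\ka'(x)|+|\ka'(u)|)$ with the leading $y$-components cancelling, while $(z_+-q_-)\cdot(n_{z_+}+n_{q_-})$ picks up an additional factor that, combined with $|z_+-q_-|^2\ge (x-u)^2+(\ka(x)+\ka(u))^2$ and the asymptotics \eqref{reg_implication}, yields a pointwise bound of the form $|K(z_+,q_-)|\lesssim |x|^{\mu-1}+|u|^{\mu-1}$. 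In particular, for $\mu=1$ one gets $K\in L^\infty$, and for general $\mu$ the singularity is integrable against the weights $|z|^{\beta}|q|^{\beta}$ in the permitted range $1-\mu<\beta+p^{-1}<\mu$.

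\textbf{Compactness on $\Lcal_{p,\beta}$.} Given $\varepsilon>0$, replace $K$ by $K_\varepsilon(z,q):=K(z,q)\chi_{|z-q|\ge\varepsilon}\chi_{|z|\ge\varepsilon}\chi_{|q|\ge\varepsilon}$. The kernel $K_\varepsilon$ is continuous on $\Ga\times\Ga$, and the associated integral operator is a Hilbert--Schmidt, hence compact, operator on $\Lcal_{p,\beta}(\Ga)$ (the weighted Schur test applies because the weight is M\"uckenhaupt in our range). The truncation error $S-T-K_\varepsilon[\,\cdot\,]$ has operator norm bounded by a quantity that tends to zero as $\varepsilon\to 0$, by Schur's test together with the integrable pointwise bounds derived in the previous step and the fact that Hardy-type integrals $\int_0^\delta u^{-\al p}\,du$ are finite in the prescribed range. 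This approximation establishes \eqref{eq:ST}.

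\textbf{Compactness on $\Wcal^{1,p}_\beta$.} By Proposition~\ref{prop:doublelayer} we have $(T\om)'=-S\om'$, and an analogous integration by parts for the adjoint-double-layer operator $S$, carried out via the identity $\frac{d}{ds}BR(z,\om)^*=\frac{1}{2\pi i}\int_\Ga (\om/q')'\,z'/(z-q)\,ds_q$ from the proof of Proposition~\ref{prop:BR}, gives $(S\om)'=-S\om'+C\om$, where $C$ is a commutator-type operator whose kernel enjoys the same cancellation as $K$ (both arising from the geometric difference $n_z\pm n_q$). Combining, $((S-T)\om)'=(S\om)'+S\om'=C\om$, so it suffices to prove that $C:\Lcal_{p,\beta}(\Ga)\to\Lcal_{p,\beta}(\Ga)$ is compact, which follows by exactly the same approximation scheme as in \eqref{eq:ST}. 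Alternatively, one checks directly that $S-T:\Lcal_{p,\beta}(\Ga)\to\Wcal^{1,p}_\beta(\Ga)$ is bounded, and then the compact embedding $\Wcal^{1,p}_\beta(\Ga)\hookrightarrow\Lcal_{p,\beta}(\Ga)$ (a weighted Rellich--Kondrachov on $\Ga$, with the weight integrable near $z_*$) yields \eqref{eq:STder}.

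\textbf{Main obstacle.} The hard part is the cross-branch analysis near the splash: the naive estimate $|K|\lesssim 1/|z-q|$ is useless since $|z_+-q_-|$ can be much smaller than the arc-length distance. The essential task is to extract, from the precise geometric structure \eqref{reg_assum}--\eqref{reg_implication}, enough cancellation from $n_z+n_q$ and from the specific direction of $z_+-q_-$ to show that the kernel remains controllable against the singular weights $|z|^\beta$, and to verify that the Schur-type bounds close within the open range $1-\mu<\beta+p^{-1}<\mu$.
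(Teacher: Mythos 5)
Your pointwise bound on the kernel across the two branches is not correct, and this sinks the Hilbert--Schmidt/Schur truncation strategy. Take $z_+=x+i\ka(x)\in\Ga^+$ and $q_-=u-i\ka(u)\in\Ga^-$ with $u\sim x$ (the ``central'' region $I_c(x)$). The cancellation you identify gives $|n_{z_+}+n_{q_-}|\lesssim \ka'(x)+\ka'(u)\sim x^\mu$, but you also have $|z_+-q_-|\gtrsim \ka(x)+\ka(u)\sim x^{\mu+1}$, so
$$
|K(z_+,q_-)|\,\lesssim\,\frac{\ka'(x)+\ka'(u)}{|z_+-q_-|}\,\sim\,\frac{x^\mu}{x^{\mu+1}}\,=\,\frac{1}{x},
$$
which for $\mu=1$ is unbounded, not $L^\infty$ as you claim, and for general $\mu$ does not satisfy $|K|\lesssim|x|^{\mu-1}+|u|^{\mu-1}$. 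The kernel in fact retains a genuine Cauchy-type singularity in the central region: in the graph parametrization the dominant piece is $\rho'(u)/\big((x-u)+i\rho(u)\big)$ with $\rho=2\ka$, which is an $L^p$-bounded singular integral operator but is \emph{not} Hilbert--Schmidt and is not even approximated in operator norm by truncations $K_\varepsilon$ of the kind you propose. Consequently the claimed Schur estimate on the truncation error does not close.

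What is actually needed in the central region is a compactness mechanism that does not rely on kernel smallness but on the vanishing of the multiplying factor $\rho'(u)$ at the cusp. This is precisely what the paper's Lemma~\ref{lem:compact} supplies: the operator $A$ with kernel $1/\big((x-u)+i\rho(u)\big)$ is merely \emph{bounded} on $\Lcal_{p,\beta}$, but the composition $aA$ with a continuous $a$ vanishing at $0$ is compact, by approximating $a$ with cutoffs $a_k$ supported away from the cusp so that $a_kA$ is Hilbert--Schmidt, and letting $k\to\infty$ in operator norm. Applied with $a=\rho'$, this treats the central region; the left/right parts $\Ga^-_l$, $\Ga^-_r$ are dominated by Hardy-type operators with integrable kernels (those \emph{are} handled in the spirit you describe) and their compactness follows from the sharp range in Lemma~\ref{lem:hardy}. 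Your argument is missing this Lemma~\ref{lem:compact}-type step, and without it the proof of \eqref{eq:ST} does not go through. The ``alternative'' you suggest for \eqref{eq:STder} (boundedness of $S-T:\Lcal_{p,\beta}\to\Wcal^{1,p}_\beta$ followed by a compact embedding) is also not established and is a much stronger statement than what the paper proves; the paper instead differentiates, absorbs $(S-T)\om'$ (compact by the $\Lcal_{p,\beta}$ result applied to $\om'$), and treats the residual terms $I_1$, $I_2$ individually.
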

\begin{proof}
If the interface is a sufficiently regular chord-arc curve, then $T$ and $S$ are both compact operators of Hilbert-Schmidt type. In particular, it is enough to consider $z$ in a small neighborhood of the splash point, i.e. we may assume $z = z_+\in \Ga^+$ with $0 < x < \de/2$. Taking into account the change of orientation on the upper branch of $\Ga\cap B(z_*)$ when parametrized as a graph, we obtain
$$
(S-T)\om(z_+) = p.v.\int_{\Ga}\, \om(q)\Big\{-\pa_{n_{z_+}}\log|z_+-q|^{-1} - \pa_{n_q}\log|z_+-q|^{-1}\Big\}\,ds_q.
$$
Moreover, we may further restrict attention to $q= q_-\in \Ga^-$ (when the integral runs over $\Ga \setminus (\Ga^- \cup \Ga^+)$ the kernel of both operators is bounded, while it is at most weakly singular on $\Ga^+$). In particular, it is enough to consider the operator
$$
\om(x) \longrightarrow \Im \int_{\Ga^-} \om(u)\,\underbrace{\bigg (- \frac{|q'_-|}{|z'_+|}\frac{z'_+}{z_+ - q_-} + \frac{q'_-}{z_+ - q_-} \, \bigg)}_{:=k(x,u)} du.
$$
The kernel reads
$$
k(x,u) = -i\big( \ka'(u) + \ka'(x)\big)\bigg(\frac{1}{z_+ - q_-}\bigg) + \bigg(1- \frac{|q_-'|}{|z_+'|}\bigg)\frac{z_+'}{z_+ - q_-}
$$
with
$$
\bigg(1- \frac{|q_-'|}{|z_+'|}\bigg) = O\big(\ka'(x)^2 - \ka'(u)^2\big).
$$
In particular, using estimate \eqref{est:lrbasic} it is not difficult to see the integral over $\Ga^-_l(x)$ is dominated by the compact operator  
\be\label{op:HL}
\om \longrightarrow x^{\mu -1}\int_0^x (|\om(u)| + |\om(-u)|)du \,:\, \Lcal_{p,\beta}\longrightarrow \Lcal_{p,\beta},
\ee
while the integral over $\Ga^-_r(x)$ is dominated by the compact operator 
\be\label{op:HR}
\om \longrightarrow \int_x^{\delta} \big(|\om(u)|+ |\om(-u)|\big)\, u^{\mu - 1}du \,:\,\Lcal_{p,\beta}\longrightarrow \Lcal_{p,\beta}
\ee
(cf. \ref{S.appendix}. Lemma \ref{lem:hardy} in both cases). It remains to estimate the integral over $\Ga^-_c(x)$. We have 
$$
k(x,u) = -i\rho'(u)\,\frac{1}{z_+ - q_-} + O(x^{\mu - 1})
$$
with the integral over the error term essentially dominated by \eqref{op:HL} and therefore compact. On the other hand, a calculation similar to \eqref{eq:eqga-} implies  
$$
\rho'(u)\frac{1}{z_+ - q_-} = \rho'(u)\frac{1}{(x-u) + i\rho(u)} + O (x^{\mu - 1})
$$
and we are finished, since compactness of the remaining term follows from Lemma \ref{lem:compact}.  

It remains to consider the derivative of $S- T$. For simplicity, we temporarily assume the interface is parametrized w.r.t. the arc-length parametrization and we show 
$$
\frac{d}{ds}(S-T):\Wcal^{1,p}_{\beta} \longrightarrow \Lcal_{p,\beta}
$$
is compact.  For any $z\neq z_*$, we have
$$
\aligned
(S\om)' &=  \Im\Bigg(z''\int_\Ga \frac{\om(q)}{q'} \frac{q'}{z-q}\,ds_q\Bigg) +  \Im\Bigg(z'\int_\Ga \bigg(\frac{\om(q)}{q'}\bigg)' \frac{z'}{z-q}\,ds_q\Bigg)\\
(T\om)' &= - S\om ' = - \Im\Bigg(\int_\Ga \om'(q) \frac{z'}{z-q}\,ds_q\Bigg)
\endaligned
$$
(where all the integrals are understood in the sense of principal value). By the first part of the proof $(S - T)\om'$ is compact, hence it remains to consider 
$$
\aligned
(S\om)' - T\om' &= \Im\Bigg(\int_\Ga \om(q) \, \frac{z'' - q''}{z-q}\,ds_q + \int_\Ga \Big(\om'(q)/q' - \om(q)q''/(q')^2\Big)\frac{(z')^2 - (q')^2}{z-q}\,ds_q\Bigg) \\
&=: I_1 + I_2.
\endaligned
$$
The kernel of $I_2$ is at most weakly singular, except when e.g. $z = z_+\in \Ga^+$ and $q = q_-\in \Ga^-$. However, there the kernel (in the graph parametrization) reads
$$
\bigg(\frac{z'_+}{|z'_+|} - \frac{q'_-}{|q'_-|}\bigg) \bigg(-\frac{|q'_-|}{|z'_+|}\frac{z'_+}{z_+ - q_-} + \frac{q'_-}{z_+ - q_-}\bigg) 
$$
i.e. we recover $k(x,u)$ from above multiplied by a bounded function. Since clearly  $\om'(q)/q' - \om(q)q''/(q')^2\in \Lcal_{p,\beta}$, this can be estimated as in the first part of the proof. Finally, $-I_1$ is the imaginary part of 
$$
\om(0) \, \underbrace{\int_\Ga  q''\, \frac{ds_q}{z-q} }_{\in  \Lcal_{p,\beta}(\Ga)} + \int_\Ga (\om(q) - \om(0)) q'' \frac{ds_q}{z-q} - z'' \underbrace{\int\om(q) \, \frac{ds_q}{z-q}}_{\in \Wcal^{1,p}_\beta}
$$ 
The first term is an operator of finite rank and is therefore compact. The second term belongs to $\Wcal^{1,p}_{\beta + 1 -\mu}(\Ga)$, since $q''(\om(q) - \om(0))\in \Wcal^{1,p}_{\beta + 1 -\mu}(\Ga)$ and $\beta + 1 - \mu$ defines a Muckenhaupt weight, cf. Proposition \ref{prop:BR}. In particular, it can be written as a sum of a continuous linear functional on $\om$ and an integral of a function in $\Lcal_{p, \, \beta + 1 - \mu}$ which is compact in $\Lcal_{p, \beta}$ by Lemma \ref{lem:hardy}. Similarly, we conclude the last term is compact.  
%
%
\end{proof}

\end{subsection}

\end{section}

\section{Results on invertibility}\label{S.invertibility}

 In this section we study invertibility of the operator $\pi I + S$
 which corresponds to solving the Neumann problem for the Laplace
 equation in the ``splash'' domain. We follow Maz'ya and Soloviev (cf. \cite{MazSo}) and use conformal maps to transform singular domains to the horizontal strip where solutions of the Laplace equation are well known. 
 
 Unlike the previous section, here we work with periodic versions of the relevant singular integrals, i.e. $\Ga$, $\Om$ and $\Om^c$ are as defined in section \ref{S.setting}. However, when we construct the required harmonic functions in section \ref{S.aux} we will work with bounded domains, hence we introduce the following notation for their images under the exponential map
 $$
 P: \Scal_\pi \rightarrow \Cbb\setminus [0, \infty), \qquad P(z):= -e^{-iz} .
 $$  
 The interface $\Ga$ is then mapped to a closed, bounded curve $\t\Ga=\overline{P(\Ga)}$, while
 $$
 P(\Om) = \t \Om\setminus [0, \infty), \qquad P(\Om^c) = \t \Om^c\setminus [0, \infty), 
 $$
 where $\t\Om$ corresponds to the interior of $\overline{P(\Om)}$ and $0\in\t\Om$.  
 
 We first state a result on the boundedness of the periodic Cauchy integral and the periodic single-layer potential on $\Rbb^2$ away from the interface. We omit the proof, as it is a straightforward adaptation of a similar result given in \cite{MazSo}.

\begin{lemma}
\label{lem:Vc}
Let $0<\beta + p^{-1}<1$ and let $\om\in \Lcal_{p, \beta}(\Ga)$. Then, the periodic, complex single-layer potential  
$$
\Vcal\om(z) = -\frac{1}{2\pi i}\int_\Ga \om(q) \log \Big(\sin \Big( \frac{z-q}{2}\Big)\Big) ds_q 
$$
is bounded on any horizontal strip containing $\Ga$ and we have
\be\label{eq:vcalinf}
\Vcal\om(z) = \Im z + O(1)\qquad \Im z\gtrless \infty, \quad |z|\rightarrow \infty.
\ee
If $\om\in \Wcal^1_{p, \beta}(\Ga)$, then the periodic Cauchy integral   
$$
\Wcal \om(z) = \frac{1}{4\pi i}\int_\Ga \om(q)\cot\Big(\frac{z - q}{2}\Big)\,dq 
$$
is bounded on $\Rbb^2$. More precisely, we have
$$
\big|\Wcal\om(z)\big| \leq C, \qquad \forall z\in\Rbb^2
$$
with
\be\label{eq:wcalpinf}
\Wcal\om(z)  \pm \frac{1}{4\pi}\int_\Ga \om(q)dq \longrightarrow 0, \qquad \Im z\gtrless \infty, \quad |z|\rightarrow \infty.
\ee
\end{lemma}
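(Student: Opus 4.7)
The plan is to reduce both claims to a single uniform estimate of the form $\int_\Ga |\om(q)|\,|\log|z-q||\,ds_q \lesssim \|\om\|_{p,\beta}$, combined with explicit expansions of the trigonometric kernels at $|\Im z|\to\infty$. The hypothesis $0<\beta+p^{-1}<1$ is precisely the Muckenhoupt $A_p$-condition for the power weight $|q|^{\beta p}$; via H\"older with the dual weight $|q|^{-\beta p'}$ (locally integrable near $z_*$ under the same condition) it gives the inclusion $\Lcal_{p,\beta}(\Ga)\hookrightarrow L^1(\Ga)$ and absorbs the weakly singular logarithmic kernel.

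For $\Vcal\om$ on a horizontal strip, periodicity reduces us to a fundamental $\Re z$-domain so the only singularities of $\log\sin((z-q)/2)$ come from $q\simeq z$. Splitting $\Ga$ into the region near $q=z$ and its complement, on the latter the kernel is uniformly bounded and the contribution is $\lesssim \|\om\|_{L^1}$, while on the former one uses $|\log\sin((z-q)/2)|\lesssim 1+|\log|z-q||$ and H\"older to get a bound by $\|\om\|_{p,\beta}\cdot\bigl(\int_\Ga |q|^{-\beta p'}\,|\log|z-q||^{p'}\,ds_q\bigr)^{1/p'}$, finite and uniform in $z$ by the Muckenhoupt condition. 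The asymptotic then follows by inserting
$$
\log\sin\!\Big(\tfrac{w}{2}\Big) = \mp\tfrac{iw}{2} + \log\!\Big(\mp\tfrac{1}{2i}\Big) + \log\!\bigl(1-e^{\pm iw}\bigr), \qquad \pm\Im w>0,
$$
into the definition of $\Vcal$: the linear term yields the advertised $\Im z$-linear leading behavior (once the prescribed value of $\int_\Ga\om\,ds_q$ is used), the constant is bounded, and the residual $\log(1-e^{\pm iw}) = O(e^{-|\Im z|/2})$.

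For $\Wcal\om$ I will use $q_\al\,\cot((z-q)/2) = -2\,\pa_\al\log\sin((z-q)/2)$ and integrate by parts in $\al$ over one period. Since $\om\in\Wcal^{1,p}_\beta$ is continuous (by the embedding noted right after the definition of the space) and $2\pi$-periodic, the only boundary contribution is the $z$-independent constant $\om(\pi)\cdot i\pi$ coming from the branch jump of $\log\sin$ across one period; the bulk term is then $\Vcal$ applied to $d\om/ds\in\Lcal_{p,\beta}$, which by the first part is bounded on every horizontal strip, and in fact on all of $\Rbb^2$ because the linear-in-$z$ piece of the corresponding asymptotic vanishes (as $d\om/ds$ has zero mean over the closed period). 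The precise limit values $\mp(4\pi)^{-1}\int_\Ga \om\,dq$ as $\Im z\to\pm\infty$ are cleanest to read off directly from the original formula via dominated convergence, using $\cot((z-q)/2)\to\mp i$ uniformly in $q\in\Ga$ together with $\om\in L^1(\Ga)$.

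The step requiring the most care is the near-splash analysis of the logarithmic integral: at points close to $z_*$ the two branches of $\Ga$ are Euclidean-close while their arc-length separation is of order one, so the weight $|q|$ appearing in $\Lcal_{p,\beta}$ is a priori incomparable to $|z-q|$ across the pinch. The weakly singular (logarithmic) character of both kernels, combined with the Muckenhoupt absorption via H\"older, is what rescues the estimate --- exactly the same structural mechanism already exploited for the more singular kernels treated in Propositions~\ref{prop:BR} and~\ref{prop:compact}.
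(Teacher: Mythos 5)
The paper does not actually give a proof of this lemma; it explicitly omits it as ``a straightforward adaptation of a similar result given in [MazSo].'' Your write-up fills that gap, and the overall structure is correct: the inclusion $\Lcal_{p,\beta}(\Ga)\hookrightarrow L^1(\Ga)$ from $0<\beta+p^{-1}<1$, H\"older with the dual weight $|q|^{-\beta p'}$ to absorb the logarithmic singularity of the kernel, the explicit splitting $\log\sin(w/2)=\mp iw/2+\mathrm{const}+\log(1-e^{\pm iw})$ to read off the linear growth, and reduction of $\Wcal$ to $\Vcal$ applied to $d\om/ds$ via integration by parts (whose linear piece vanishes because $d\om/ds$ has zero mean over a period) --- this is precisely the Maz'ya--Soloviev machinery the paper appeals to, adapted to the periodic/trigonometric kernels.

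Two small remarks. First, the integration by parts producing $\Wcal\om = -\Vcal(d\om/ds)+\mathrm{const}$ requires tracking the branch of $\log\sin((z-q(\al))/2)$ as $\al$ runs over a period: the boundary term is $-\frac{1}{2\pi i}\om(\pi)\bigl[\log\sin\bigr]_{-\pi}^{\pi}$ and the jump in the argument of the sine over one period depends on which component of $\Rbb^2\setminus\Ga$ contains $z$, so the ``constant'' you extract is locally constant in $z$ rather than globally constant; it is still $z$-independent on each component and bounded (by $\pi\|\om\|_\infty$, say), which is all you use, but it is worth saying this explicitly. Second, as you yourself flag, the leading term you compute is $\frac{z}{4\pi}\int_\Ga\om\,ds_q + O(1)$, i.e.\ proportional to $z$ (equivalently, modulo the bounded $\Re z$, proportional to $i\,\Im z$), with the coefficient set by the normalisation of $\int_\Ga\om\,ds_q$; the paper's display $\Vcal\om(z)=\Im z + O(1)$ is written loosely. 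Your derivation captures the substantive content (linear growth in $\Im z$ with a coefficient fixed by the total vorticity), which is exactly what is invoked later in the Phragm\'en--Lindel\"of arguments. The dominated-convergence computation of $\lim_{\Im z\to\pm\infty}\Wcal\om(z)=\mp\frac{1}{4\pi}\int_\Ga\om\,dq$ using $\cot(w/2)\to\mp i$ is clean and correct.
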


 

We now construct solutions to the exterior Dirichlet problem in the
``cusp''-domain $\Om^c$.

\begin{theorem}\label{thm3}Let $0<\beta + p^{-1}<\mu$. Then
$$
\pi I + T: \Wcal^{1,p}_{\beta \rm (even)}(\Ga) \longrightarrow \Wcal^{1,p}_{\beta \rm (even)}(\Ga) 
$$
is surjective.
\end{theorem}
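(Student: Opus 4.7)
The plan is to follow Maz'ya and Soloviev and invert $\pi I + T$ by solving an associated Dirichlet problem in the cusp domain $\Om^c = \Om_1 \cup \Om_2$. Introduce the double-layer potential
$$
\Dcal\om(z) := \frac{1}{2\pi}\int_\Ga \om(q)\,\pa_{n_q}\log|z-q|^{-1}\,ds_q,
$$
which is harmonic on $\Scal_\pi\setminus\Ga$, has continuous normal derivative across $\Ga$, and whose boundary traces $\Dcal\om^\pm$ from $\Om^c$ and $\Om$ satisfy $2\pi\Dcal\om^\pm = \pm\pi\om + T\om$ for a suitable choice of normal. Thus $(\pi I + T)\om = 2\pi\,\Dcal\om^+$, and surjectivity amounts to showing that every even $f \in \Wcal^{1,p}_\beta(\Ga)$ can be realized as $2\pi\,U^+$ for a function $U$ harmonic on $\Scal_\pi\setminus\Ga$ with continuous normal derivative across $\Ga$ and appropriate decay at infinity; the desired density is then $\om = (U^+ - U^-)/(2\pi)$.

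The first step is to solve the Dirichlet problem $\Delta u = 0$ in $\Om^c$ with boundary data $f/(2\pi)$. Each of the two components $\Om_1$ and $\Om_2$ is a domain with an outward cusp at the common tip $z_*$, so I would uniformize each conformally, sending $\Om_1$ to the unit disk and $\Om_2$ to a canonical unbounded domain with matching cusp behavior. In the target domain Dirichlet is solved explicitly by the Poisson integral, and the precise asymptotics of the conformal map near $z_*$, dictated by the curvature hypothesis \eqref{reg_assum}, are exactly what convert the weight $|q|^\beta$ on $\Ga$ into a standard Muckenhaupt weight after change of variables. Invoking the weighted boundedness of the Poisson integral together with the harmonic-function estimates from Section~\ref{S.aux} and the appendix, this yields an even harmonic $u$ on $\Om^c$ with $u|_\Ga = f/(2\pi)$ and with trace regularity compatible with $\Wcal^{1,p}_\beta$.

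The second step is to close $u$ to all of $\Scal_\pi\setminus\Ga$ by choosing its extension into $\Om$ so that the normal derivative matches across $\Ga$. This is a Neumann problem on the chord-arc domain $\Om$ with datum $g := \pa_n u|_\Ga$, whose compatibility condition $\int_\Ga g\,ds = 0$ is automatic since it expresses vanishing of the total flux of $u$ across $\partial\Om^c$, which holds because $u$ is harmonic in $\Om^c$ and decays at $\Im z = +\infty$. The union of the two extensions produces the required $U$, and the jump relation for $\Dcal$ yields $(\pi I + T)\om = 2\pi\,U^+ = f$ with $\om$ even and in $\Wcal^{1,p}_\beta(\Ga)$.

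The main obstacle will be controlling the weighted regularity through the conformal maps near $z_*$, where they degenerate at the precise rate set by \eqref{reg_assum}. The task is to ensure that both the Poisson extension in $\Om^c$ and the Neumann solution in $\Om$ deliver $\Ga$-traces that sit exactly in $\Wcal^{1,p}_\beta$, with no additional singular modes at the cusp beyond those absorbed by the weight. This is the precise setting in which the function-space framework of Section~\ref{S.estimates} is designed to operate; the boundedness of $T$ (Proposition~\ref{prop:doublelayer}) and the compactness of $S - T$ (Proposition~\ref{prop:compact}) will then provide the bridge between Dirichlet and Neumann data on either side of $\Ga$ needed to close the argument in the even subspace.
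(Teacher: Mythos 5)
Your overall strategy — solve a Dirichlet problem in the cusp domain $\Om^c$, a Neumann problem in $\Om$ with matching normal data, take $\om$ to be the resulting jump, and invoke the double-layer jump relations — is exactly the architecture of the paper's proof (Propositions~\ref{prop:in} and~\ref{prop:out} followed by a uniqueness argument). Two of the steps, however, are genuine gaps rather than omitted routine details.

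First, the uniformization you propose for $\Om_1$ (send it to the unit disk and use the Poisson integral) does not deliver the weighted estimate you need. Since $\Om_1$ has an outward cusp of order $\mu$ at $z_*$, the boundary conformal map $f:\Dbb\to\Om_1$ degenerates only logarithmically there, $|f(\zeta)| \sim |\log|1-\zeta||^{-1/\mu}$ (Lemma~\ref{lem:r}), so the pullback of $|q|^\beta$ to $\pa\Dbb$ is the logarithmic weight $|\log|1-\zeta||^{-\beta/\mu}$. This is \emph{not} a power weight and not $A_p$, and the conjugate function / Poisson integral is not bounded in this weighted class, so the claim that \eqref{reg_assum} converts $|q|^\beta$ to a Muckenhaupt weight on the disk is false. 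This is precisely why the paper, following Maz'ya--Soloviev, passes to the strip via $F=f\circ\tanh(w/2)$: under $F$ the cusp is sent to $+\infty$, $|F(w)|\sim|w|^{-1/\mu}$ becomes a power, the weight becomes $(1+\tau^2)^{s/2}$, and the required Fourier-multiplier and Hardy estimates (Lemmas~\ref{lem:aux1} and~\ref{lem:aux2}) are available. The disk map is used in Proposition~\ref{prop:in} only for the portion of the data that vanishes near $z_*$, where the weight plays no role.

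Second, the closing line — ``the jump relation for $\Dcal$ yields $(\pi I + T)\om = 2\pi\,U^+ = f$'' — elides the step where all the remaining work lives. The jump relation gives the jump of $\Dcal\om$; to conclude that the $U$ you glued from the two boundary-value problems actually equals $\Dcal\om$ (so that its outer trace is $(\pi I+T)\om$), you must show $\Phi:=\Dcal\om - U$ vanishes identically. The paper proves this by checking that $\Phi$ is continuous across $\Ga\setminus\{z_*\}$ with continuous normal derivative (hence satisfies the mean-value property and extends harmonically across $\Ga$), that $z_*$ is a removable singularity because $W\om$, $u^e$, $u^i$ are all $O(1)$ there, and then appealing to Liouville together with decay as $|\Im z|\to\infty$. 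Without this identification you have merely produced a candidate density, not shown that $\pi I+T$ maps it to $f$. Related to this, the normalizations the paper uses — subtracting $u^e(\infty)$, $u^i(\infty)$ and adjusting $u^i$ by an additive constant to force $\Re\int_\Ga\om\,dq=0$, so that $\Phi$ actually decays at infinity — are essential for the Liouville step and are absent from your sketch.
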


\begin{proof}
 Let $\phi \in \Wcal^{1,p}_{\beta \rm (even)}(\Ga)$. By Proposition \ref{prop:in}, there exists a bounded harmonic function such that  
 $$
 \begin{cases}
\Delta u^e = 0& \quad \mathrm{in } \,\, \Om^c, \\
 u^e = \phi & \quad \mathrm{on } \,\, \t \Ga\setminus\{z_*\}
\end{cases}
 $$
and similarly by Proposition \ref{prop:out} there exists a harmonic function on $\Om$ such that
 $$
 \begin{cases}
\Delta u^i = 0& \quad \mathrm{in } \,\,\Om, \\
 \pa_n u^i = \pa_n u^e& \quad \mathrm{on } \,\, \t \Ga\setminus\{z_*\}.
\end{cases}
 $$
Let $W\om = 2\pi \Re \Wcal\om$ and let $\Phi$ be the harmonic function 
$$
\Phi(z) := \begin{cases}
            \Phi^i(z) := W\om(z) - u^i(z) + u^i(\infty),& \quad z\in \Om  \\
            \Phi^e(z) := W\om(z) - u^e(z) + u^e(\infty),& \quad z\in\Om^c,  \\
           \end{cases}
$$
with $u^{e, i}(\infty) := \lim_{\Im z\rightarrow \pm \infty} u^{e,i}(z)$. We define $\om$ in such a way that   
\begin{equation}
\label{eq:comp}
\Phi^i(z) = \Phi^e(z),\quad \pa_n \Phi^i(z) = \pa_n \Phi^e(z) \quad\quad \forall z\in\Ga\setminus\{z_*\}.
\end{equation}
The second condition is satisfied by construction (the normal derivative of $W\om$ is continuous over the boundary), while the first one follows from jump relations for $W\om$, provided that 
$$
\om := \frac{1}{2\pi}(u^e - u^e(\infty) - u^i + u^i(\infty)).
$$
By adding an overall constant to $u^i$, we may assume $\Re\int_\Ga\om(q)dq=0$ and therefore 
$$
\Phi(z) \rightarrow 0, \qquad |z|\rightarrow \infty.
$$ 
We extend $\Phi$ periodically to all of $\Rbb^2\setminus \Ga$. The compatibility conditions \eqref{eq:comp} then imply $\Phi$ can be extended to a continuous function on $\Rbb^2\setminus\{z_* + 2k\pi\}$ for all $k\in \Zbb$. Moreover, it is not difficult to see that $\Phi$ also satisfies the mean-value property. In fact, let $z_0\in\Ga\setminus\{z_*\}$ and let $B \equiv B_r(z_0)$ be so small that $B\subseteq \Rbb^2\setminus\{z_* + 2k\pi\} $. In the interior of $B^i = B\cap \Om$ the harmonic function $\Phi^{i}$ (respectively $\Phi^e$ in the interior of $B^e = B\cap \Om^c$) satisfies
$$
\Phi^{i,e}(z) = \frac{1}{2\pi}\int_{\pa B^{i,e}} \Big(-\Phi^{i,e}(q)\frac{\pa}{\pa n_q} \log\frac{1}{|z-q|} + \frac{\pa\Phi^{i,e}}{\pa n}(q) \log \frac{1}{|z-q|} \Big)\, ds_q,  
$$
so letting $z\rightarrow z_0^\pm$ and using \eqref{eq:comp}, we get
$$
\Phi(z_0) = \frac{1}{2\pi}\int_{\pa B} \Big(-\Phi(q)\frac{\pa}{\pa n_q} \log\frac{1}{|z_0-q|} + \frac{\pa\Phi}{\pa n}(q) \log \frac{1}{|z_0-q|} \Big)\, ds_q.
$$
Since 
$$
\log|z_0 - q| = \log r, \quad \frac{\pa}{\pa n_q} \log\frac{1}{|z_0-q|} = \frac{1}{r}, \quad \int_B\pa_n \Phi =0
$$
we have 
$$
\Phi(z_0) = \frac{1}{2\pi r} \int_{\pa B} \Phi(q)\,ds_q.
$$
In particular, $\Phi$ is harmonic on $\Cbb\setminus\{z_* + 2k\pi\}$, vanishes at infinity and has a removable singularity at each $z_* + 2k\pi$, since $W\om$, $u^i$ and $u^e$ are all $O(1)$ there. Therefore $\Phi\equiv 0$ and 
$$
(\pi I + T)\om = \phi.
$$
\end{proof}

\begin{theorem}
\label{thm2}
Let $0<\beta + p^{-1}<\mu$. Then the operator 
$$
\pi I + T: \Wcal^{1,p}_{\beta \rm (even)}(\Ga) \longrightarrow \Wcal^{1,p}_{\beta \rm (even)}(\Ga) 
$$
is injective.  
\end{theorem}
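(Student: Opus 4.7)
The plan is to run a standard exterior--interior argument for the double-layer potential, adapted to the cusp geometry. Suppose $(\pi I + T)\om = 0$ for some even $\om \in \Wcal^{1,p}_{\beta \text{ (even)}}(\Ga)$; set $\Phi := W\om = 2\pi \Real\Wcal\om$, which by Lemma \ref{lem:Vc} is $2\pi$-periodic, bounded on all of $\Rbb^2$, and harmonic on $\Scal_\pi\setminus\Ga$. With the sign convention used in the proof of Theorem \ref{thm3}, the jump relations read $\Phi^e - \Phi^i = 2\pi\om$ and $\Phi^{e,i} = T\om \pm \pi\om$ on $\Ga \setminus \{z_*\}$, so the vanishing hypothesis gives $\Phi^e \equiv 0$ on $\Ga\setminus\{z_*\}$.

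First, I would exploit the symmetries: since $\om$ is even and $\Ga$ satisfies \eqref{eq:sym}, the substitution $\al \mapsto -\al$ in the integral defining $\Wcal\om$ yields $\Wcal\om(-z^*) = -\overline{\Wcal\om(z)}$, hence $\Phi(-z^*) = -\Phi(z)$. Combining this odd symmetry with \eqref{eq:wcalpinf}, which says that $\Phi$ tends to constants as $\Imag z \to \pm\infty$ uniformly in $\Real z$, forces those constants to vanish, so $\Phi\to 0$ at infinity and $\Real\int_\Ga\om\,dq=0$. Next, on the bounded component $\Om_1$, $\Phi$ is bounded and harmonic with zero trace on $\pa\Om_1\setminus\{z_*\}$, so the maximum principle with a removable singularity at the polar set $\{z_*\}$ yields $\Phi\equiv 0$ in $\Om_1$. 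On the unbounded component $\Om_2$, a Phragm\'en--Lindel\"of argument on the periodic cusped half-strip, using the decay at $+\infty$ and the vanishing boundary trace, gives $\Phi\equiv 0$ in $\Om_2$ as well.

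Continuity of the normal derivative of the double-layer potential across $\Ga\setminus\{z_*\}$ then forces $\pa_n\Phi = 0$ on the $\Om$-side, so a Green's identity in the periodic domain $\Om$, combined with exponential decay of periodic harmonic functions with vanishing limit at $-\infty$, gives $\Phi\equiv 0$ in $\Om$ as well. Plugging $\Phi^e = \Phi^i = 0$ into the jump relation then forces $\om \equiv 0$, establishing injectivity. The main technical obstacle is the justification of the two maximum principles at the cusp tip $z_*$: this is where the boundedness of $\Phi$ up to $z_*$ afforded by Lemma \ref{lem:Vc}, together with the fact that a single point is a removable set for bounded harmonic functions in $\Rbb^2$, plays the decisive role.
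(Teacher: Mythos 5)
The main gap is in the symmetry step. With $\om$ even and $z(-\al)=-z(\al)^*$, substituting $\al\mapsto-\al$ in the Cauchy integral gives $\Wcal\om(-z^*)=\overline{\Wcal\om(z)}$, so the real part satisfies $\Phi(-z^*)=+\Phi(z)$: the symmetry of $\Phi=W\om$ under reflection in the vertical axis is \emph{even}, not odd. Since this reflection fixes $\Im z$, an even symmetry under it tells you nothing about the constant limits of $\Phi$ as $\Im z\to\pm\infty$, and therefore cannot force $\Real\int_\Ga\om\,dq=0$. (If the symmetry really were odd, your deduction would be fine, so the error is precisely the sign.) In the paper this identity is an \emph{output}, not an input, of the Phragm\'en--Lindel\"of step: one first shows $W\om\equiv 0$ on the unbounded component $\Om_2$ using only the boundedness of $\Wcal\om$ from Lemma~\ref{lem:Vc} (no decay hypothesis is needed), and then the limit formula~\eqref{eq:wcalpinf} at $\Im z\to+\infty$ forces $\Real\int_\Ga\om\,dq=0$; the limit at $-\infty$ then vanishes as a consequence. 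You should reorganize so that the PL argument on $\Om_2$ is run first with boundedness as the only a priori information at infinity, and extract the vanishing of the constants from it rather than feeding it in.

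Your handling of the interior domain also departs from the paper's and carries technical weight you have not discharged. Passing to a homogeneous Neumann problem via the continuity of $\pa_n W\om$ and applying a Green's identity requires, in this unbounded cusped geometry, that you justify the integration by parts near $z_*$ (where $\nabla\Phi$ may fail to be square-integrable from the $\Om$ side) and control the flux as $\Im z\to-\infty$. The paper sidesteps all of this: once $W\om\equiv 0$ in $\Om^c$, the analytic function $\Wcal\om$ has constant real part there, so its imaginary part $\t W\om=2\pi\Im\Wcal\om$ is locally constant on $\Om^c$; since the Plemelj jump of $\Wcal\om$ across $\Ga$ is the real quantity $\om$, $\t W\om$ is continuous across $\Ga$, and the two constants must agree by continuity at the common tip $z_*$. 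This gives $\t W\om$ a single constant Dirichlet datum on all of $\Ga$ (including $z_*$), to which the Phragm\'en--Lindel\"of principle applies directly in $\Om$ with no boundary-regularity issues; the Cauchy--Riemann equations then make $W\om$ constant in $\Om$, and the decay from~\eqref{eq:wcalpinf} kills the constant. I would recommend switching to this conjugate-function argument rather than the Green's identity.
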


\begin{proof}
Let $\om\in \Wcal^{1, p}_{\beta \rm (even)}(\Ga)$ be such that $(\pi I + T)\om = 0$. By construction, we have 
$$
(\pi I + T)\om (z_0) = \lim_{z\rightarrow z_0 ^-} W\om(z), \qquad  z_0\in \Ga\backslash \{z_*\}, \, z\in \Om^c  
$$
where $W\om  :=  2\pi \Re (\Wcal\om)$.
In particular, $W\om$ solves  
$$
\begin{cases}
 \Delta W\om = 0& \quad \mathrm{in } \quad \Om^c, \\
 W\om = 0 & \quad \mathrm{on } \quad \Ga\setminus{\{z_*\}}.
\end{cases}
$$
Since $W\om$ does not converge to $(\pi I +T)\om$ when approaching the singular point (unless $\om(z_*)=0$), we map each connected component of $\Om^c$ to the horizontal strip via the mapping $F$ constructed in Lemma \ref{lem:r}. For the unbounded component, some care is needed. Since $\Wcal_p\om$ is $2\pi$-periodic, we can extend $\Wcal_p \om \circ P^{-1}$ as an analytic function on $\t\Om_2$, a type of domain treated in Lemma \ref{lem:r}. In both cases, the resulting mapping is a harmonic function on $\Pi$ which vanishes on $\pa\Pi$. Moreover, by Lemma \ref{lem:Vc} it is bounded, hence we may use the Phraghmen-Lindelof principle on the strip (e.g. \cite{Ran}) to conclude
\be\label{eq:Win}
W\om\equiv 0 \quad \text{in } \Om^c, \qquad \Re\int_\Ga \om(q) dq = 0.
\ee
In particular, $\t W\om  = 2\pi \Im (\Wcal_p\om)$ must be equal to a (not necessarily same) constant in each connected component of $\Om^c$. Since $\t W\om$ is continuous when crossing $\Ga$ and its limit is continuous on $\Ga$ (its derivative belongs to $\Lcal_{p,\beta}(\Ga)$) it solves the Dirichlet problem in $\Om$ with constant boundary data on all of $\Ga$ including $z_*$. Again by the Phragmen-Lindelof principle (this time we may consider the entire domain $\Om^p$) we conclude 
$$
\t W\om \equiv c \quad \text{in } \Om
$$ 
and by the Cauchy-Riemann equations the same is true for $W\om$. However, \eqref{eq:Win} and \eqref{eq:wcalpinf} imply that it actually vanishes, hence $\om \equiv 0 $ on $\Ga$. 
\end{proof}

Finally, we are ready to prove the main theorem of this section. We have

\begin{theorem}
 \label{thm1}Let $1-\mu<\beta + p^{-1}<\mu$. Then, the operator  
 $$
 \pi I + S\, :\, \Wcal^{1, p}_{\beta \rm (even)}(\Ga) \longrightarrow \Wcal^{1, p}_{\beta \rm (even)}(\Ga)
 $$
is invertible.
 \end{theorem}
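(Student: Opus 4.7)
The plan is to represent $\pi I + S$ as a compact perturbation of the already invertible operator $\pi I + T$ and then to verify injectivity directly, in the same spirit as the proof of Theorem~\ref{thm2} but with single-layer potentials replacing the Cauchy integral. Writing
\[
\pi I + S \;=\; (\pi I + T) + (S - T),
\]
Theorems~\ref{thm2} and~\ref{thm3} together with the bounded inverse theorem ensure that $\pi I + T$ is a Banach-space isomorphism on $\Wcal^{1,p}_{\beta \rm (even)}(\Ga)$ for $0 < \beta + p^{-1} < \mu$, a range that contains the present hypothesis $1-\mu < \beta + p^{-1} < \mu$ (which tacitly requires $\mu > 1/2$). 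Proposition~\ref{prop:compact} shows that $S - T$ is compact on $\Wcal^{1,p}_\beta(\Ga)$ in this range, and compactness descends to the even subspace thanks to the reflection symmetry~\eqref{eq:sym} of $\Ga$ and of the kernels. Consequently
\[
\pi I + S \;=\; (\pi I + T)\bigl(I + (\pi I + T)^{-1}(S - T)\bigr)
\]
is a compact perturbation of an isomorphism, hence Fredholm of index zero on $\Wcal^{1,p}_{\beta \rm (even)}(\Ga)$, and invertibility reduces via the Fredholm alternative to injectivity.

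To establish injectivity, suppose $\om \in \Wcal^{1,p}_{\beta \rm (even)}(\Ga)$ satisfies $(\pi I + S)\om = 0$, and introduce the real single-layer potential $V\om := 2\pi \Re \Vcal \om$, which is harmonic on $\Scal_\pi \setminus \Ga$. Since $S\om$ is the principal-value form of the normal derivative of the single layer on $\Ga$, the standard jump relations read
\[
\partial_n V\om\big|_{\Om^c} \;=\; \pi\om + S\om, \qquad \partial_n V\om\big|_{\Om} \;=\; -\pi\om + S\om,
\]
on $\Ga \setminus \{z_*\}$, where $n$ is the common unit normal pointing from $\Om$ into $\Om^c$. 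The hypothesis says precisely that $V\om$ solves the Neumann problem in $\Om^c = \Om_1 \cup \Om_2$ with vanishing boundary data. Arguing as in the proof of Theorem~\ref{thm3}, one may normalize $\om$ so that $\Re \int_\Ga \om(q)\,dq = 0$, which, together with Lemma~\ref{lem:Vc}, makes $V\om$ bounded on horizontal strips after subtracting an overall constant.

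I would next map each cusp component of $\Om^c$ to the horizontal strip $\Pi$ via the conformal maps of Lemma~\ref{lem:r}. The vanishing Neumann condition transports to zero Neumann data on both horizontal sides of $\Pi$, and an even reflection of $V\om$ across each of those sides produces a bounded harmonic function on the whole plane, constant by Liouville; hence $V\om$ is constant on each of $\Om_1$ and $\Om_2$. By continuity of the single layer across $\Ga$, these constants are inherited as the boundary values of $V\om$ on the two arcs of $\Ga \setminus \{z_*\}$ from inside $\Om$; the even symmetry of $\om$ and the asymptotics at $\Im z \to -\infty$ furnished by Lemma~\ref{lem:Vc} force the two constants to coincide, after which uniqueness for the Dirichlet problem in $\Om$ shows that $V\om$ is likewise constant on $\Om$. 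Therefore $\partial_n V\om|_{\Om} = 0$, and subtracting the two jump relations above yields $2\pi \om = 0$, i.e. $\om \equiv 0$.

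The principal obstacle, exactly as in the proof of Theorem~\ref{thm2}, is the delicate control of $V\om$ at the cusp point $z_*$, where the conformal map of Lemma~\ref{lem:r} degenerates: the reflection-and-Liouville step must be carried out on the straightened strip rather than on the original cusp domain, and the matching of the three constants on $\Om_1$, $\Om_2$, and $\Om$ requires careful use of the even symmetry together with the asymptotic behavior at infinity from Lemma~\ref{lem:Vc}. The invariance of the even subspace under $(\pi I + T)^{-1}(S - T)$ is immediate from the reflection symmetry of $\Ga$ and of the kernels of $S$ and $T$, and the remaining steps are structural.
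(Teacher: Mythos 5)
You follow the paper's blueprint exactly in the Fredholm reduction: the decomposition $\pi I + S = (\pi I + T) + (S - T)$, invertibility of $\pi I + T$ from Theorems~\ref{thm3} and~\ref{thm2}, compactness of $S - T$ from Proposition~\ref{prop:compact}, hence index zero, so invertibility reduces to injectivity. The injectivity argument shares the paper's skeleton as well (kernel element gives a single-layer potential $V\om$ that must be constant on each component of $\Scal_\pi\setminus\Ga$, so the jump of the normal derivative vanishes and $\om=0$), but you run it in the opposite order: the paper first shows that $\Vcal\om$ is constant on $\Om$ by lifting via $P$ and the conformal strip map $F$ of Lemma~\ref{lem:r} and invoking the Phragm\'en--Lindel\"of principle, and only then feeds constant Dirichlet data into the cusp domains $\Om_1,\Om_2$; you instead start from the vanishing Neumann condition on $\Om^c=\Om_1\cup\Om_2$ (handled by reflection and Liouville on the straightened strip) and pass to $\Om$ afterwards.

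Your reversed order is workable but introduces a matching problem that you resolve with the wrong reason. Constancy of $V\om$ on $\Om_1$ and on $\Om_2$ gives two a priori different constants $c_1,c_2$, and the Dirichlet problem on $\Om$ has constant boundary data only if $c_1=c_2$. The justification you offer --- the even symmetry of $\om$ together with the $\Im z\to -\infty$ asymptotics of Lemma~\ref{lem:Vc} --- does not deliver this: the reflection $z\mapsto -z^*$ preserves $\Om_1$ and $\Om_2$ individually (it does not exchange them), and the far-field asymptotics only constrain $V\om$ on $\Om$ and $\Om_2$, so neither relates $c_1$ to $c_2$. The correct and much simpler reason is continuity of the single layer at $z_*$: since $\overline{\Om_1}\cap\overline{\Om_2}=\{z_*\}$ and $V\om$ is continuous there, both constants equal $V\om(z_*)$. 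With that fix the argument goes through, though the paper's order sidesteps the issue entirely, since constancy on $\Om$ immediately fixes the boundary value on all of $\Ga$. A final, smaller remark: you write the vanishing Neumann data on the $\Om^c$ side, whereas the paper has $\pa_n V\om|_{\Om}=0$; this is just a sign convention tied to the orientation of the normal in the definition of $S$ and does not affect the logic, but it should be stated consistently.
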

\begin{proof}
We have
$$
\pi I + S = (\pi I + T) + (S-T),
$$
where $\pi I + T$ is invertible by Theorems \ref{thm3} and \ref{thm2}, while
$$
S-T:\Wcal^{1,p}_{\beta(\rm even)}(\Ga) \longrightarrow \Wcal^{1,p}_{\beta(\rm even)}(\Ga)
$$
is compact by Proposition \ref{prop:compact}. In particular, $\pi I + S$ is a Fredholm operator of index 0 and therefore invertible if injective. So let $\om\in W^{1,p}_{\beta \rm(even)}(\Ga)$ be such that $(\pi I + S)\om = 0$. Then 
  $$
\begin{cases}
 \Delta V\om = 0& \mathrm{in } \,\,\Om, \\
 \pa_n V\om = 0 &\mathrm{on } \,\, \Ga\backslash\{z_*\},
  \end{cases}
  $$
where $V\om(z) =  \Re \Vcal\om(z)$. 
Since $\Vcal\om(z) = \Im z + O(1)$ as $\Im z \rightarrow -\infty$ the mapping 
$$
P\circ \Vcal\om \circ P^{-1}: \t\Om\setminus[0, \infty) \rightarrow \Cbb\setminus[0, \infty)
$$
can be extended as an analytic map to the entire domain $\t \Om$. Moreover, this mapping is bounded by Lemma \ref{lem:Vc}, hence if $F:\Pi \rightarrow \t\Om$ denotes the mapping constructed in Lemma \ref{lem:r} we must have
$$
P\circ \Vcal\om \circ P^{-1}\circ F \equiv c\in \Cbb
$$  
by the Phraghmen-Lindelof principle. Now $V\om$ is continuous over the boundary and therefore satisfies 
  $$
\begin{cases}
 \Delta V\om = 0& \mathrm{in } \,\,\Om_j, \\
 V\om = \Re c &\mathrm{on } \,\, \Ga_j\setminus\{z_*\},
  \end{cases}
  $$
hence we may repeat the procedure from above with mappings $F_j:\Pi \rightarrow \t \Om_j$ to conclude that $V\om$ must be identically equal to a constant on $\Om_j$ and therefore 
$$
2\pi \om = \pa_{n^+}V\om - \pa_{n^-}V\om \equiv 0. 
$$
\end{proof}

\section{Proof of Theorem~\ref{T.main}}
\label{S.IFT}

In this section we present the proof of Theorem~\ref{T.main}, thereby
extending our results for stationary solutions with two fluids
obtained in~\cite{CEG} to curves which exhibit splash
singularities. More precisely, we use the implicit function theorem
around the critical Crapper solution $\theta_A$ to construct two-fluid
splash solutions (with small, but positive upper fluid density $\rho_1$). 

Let $G = (G_1, G_2)$ be a mapping defined by
\begin{flalign*}
G_1(\theta, \om; \,\ep, g, \kappa) :=& q \Big(1 +
                                          \frac{\ep}{2}\Big)\frac{d\theta}{d\al}
                                          + \sinh H\theta - \frac{\ep}{4}e^{H\theta}\om (\om - 2) 
                                          +e^{-H\theta}\big(g \,\Imag I(\theta) - \ka\big),\\
G_2(\theta, \om; \,\ep, g, \kappa) :=&  \om + 2BR(z,\om)\cdot \pa_\al z - 2 + \cos\theta,
\end{flalign*}
where $z$ is considered to be a function of $\theta$ via 
$$
z(\alpha) = I(\theta)(\al) := -\pi + \int_{-\pi}^\al e^{-H\theta(\al ') + i \theta (\al')} d\al'.
$$
We first give the set of admissible $\theta\in H^2_{odd}(\Tbb)$. Let $\al_*\in (0,\pi)$ be fixed. We define
$$
M(\al_*) := \{ \theta\in H^2_{odd}(\Tbb) \, : \quad  \theta(\al_*) =  \pi/2, \quad   \int_0^{\al_*} e^{-H\theta(\al ') + i \theta (\al')} d\al' = 0\},
$$
i.e. we assume the corresponding curve $z(\al)$ has a pair of vertical tangents at $z(\pm\al_*) = 0$. Moreover, we only consider $\theta \in M(\al_*)$ such that
\be\label{eq:open-set-conditions}
\Fcal_\de(z) < 0, \quad \quad \sup_{\al\in B_\de(\al_*)}\theta'(\al) < 0, \quad \quad \inf_{\al\in B_\de(\al_*)} z_{2\al}(\al) > 0
\ee  
for some small $\de>0$ (where $\Fcal_\de(z)$ has been defined in cf. \eqref{arcchord}). The first assumption implies the corresponding curve is arc-chord outside of $B_\de(\al_*)\times B_\de(-\al_*)$. The second and third condition ensure the curve has exactly one vertical tangent in $B_\de(\al_*)$ (resp. $B_\de(-\al_*)$ by symmetry), i.e. that it  satisfies \eqref{reg_assum} with $\mu = 1$ in the neighborhood of the origin. 

The set $M(\al_*)$ is not a linear space. The simplest way to analyze
functions $\theta\in M(\al_*)$ is through the Hilbert space
$$
E(\al_*) := \{ u\in H^2_{even}(\Tbb) \, : \quad u(\al_*) =  0, \quad
\int_0^{\al_*} [u(\al') + i Hu(\al ') ]\, d\al' = 0\}\,,
$$ 
which enables us to parametrize $M(\al_*)$ via the mapping
$$
\Psi: M(\al_*) \rightarrow E(\al_*), \quad \quad \Psi(\theta) := e^{-H\theta(\al)} \cos\theta(\al)\,.
$$
The function $\theta\in M(\al_*)$ and its Hilbert transform can be
recovered from $u\in E(\al_*)$ by inverting this mapping, which yields
the explicit formulas
$$
\theta = \arctan\Big(\frac{Hu}{u}\Big), \quad \quad  e^{-2H\theta} = u^2 + (Hu)^2.
$$
In order to apply the implicit function theorem, we shall consider $(G_1, G_2)$ as functions of $(u, \om; \epsilon, g, \kappa)$. 

From now on, we fix $\al_*\in(0,\pi)$ to be such that $z_A(\al_*) = z_A(-\al_*)$. Moreover, by shifting the coordinate system vertically, we may assume $z_A(\al_*) = 0$, in which case $\theta_A\in M(\al_*)$ and $u_A \equiv {z_A}_{1\al}\in E(\al_*)$. There is one more condition we need to impose on $u$. Given $\theta\in L^2(\Tbb)$ such that $\int^\pi_{-\pi}\theta(\al')d\al' = 0$, it can be shown that 
$$
\int^\pi_{-\pi} e^{-H\theta(\al ') + i \theta (\al')} d\al' = 2\pi
$$ 
(see \cite{OS} for a proof). Note that this fixes the behavior at infinity of the corresponding conformal extension to the lower-half plane. We therefore consider  
$$
\t E(\al_*) :=  \{ u \in E(\al_*) \, : \, \, \int_{-\pi}^{\pi} u(\al') d\al' = 0\},
$$  
where $\theta$ is a function of $u\in \t E(\al_*)$ via
\be\label{eq:u-to-theta}
\Psi(\theta) = u + u_A,  \quad \quad \Psi^{-1}(u) = \arctan\Big(\frac{Hu + Hu_A}{u + u_A}\Big).
\ee
The parametrization of the interface $z$ as a function of $u$ now reads
$$
z(\alpha) = I(u)(\al) := -\pi + \int_{-\pi}^\al [(u + u_A)(\al ') + i H(u + u_A)(\al')] d\al'.
$$
Finally, for the application of the implicit function theorem we further restrict to the open set $O(\al_*)\subseteq \t E(\al_*)$ of all $u$ such that $u + u_A$ satisfy \eqref{eq:open-set-conditions}. Then, the corresponding interface satisfies assumptions \eqref{reg_assum} with 
$$
\mu = 1
$$
and $G_2$ is well-defined within the framework of the previous section, provided that 
$$
\om \in \Wcal^{1,p}_{\beta(even)}(\al_*) \quad :\Leftrightarrow \quad \int_{0}^\pi(|\om(\al)|^p + |\om_\al(\al)|^p)|\al -\al_*|^{p\beta} d\al < \infty.
$$ 
Additionally, we require $\Wcal_{\beta}^{1,p} \subseteq H^1$, for then $G_1$ has values in $H^1$. This is satisfied if 
$$
0<\beta + p^{-1}<\frac{1}{2}.
$$ 
We claim     
\begin{equation}\label{defG}
G= (G_1, G_2): O(\al_*) \times \Wcal^{1,p}_{\beta(even)}(\al_*) \times \Rbb^3 \longrightarrow H^1_{even}(\Tbb) \times \Wcal_{\beta( even)}^{1,p}(\al_*)   
\end{equation}
satisfies the assumptions of the implicit function theorem. We emphasize that $G$ is to be understood as a function of $(u, \om;\, \ep, g, \kappa)$ as explained above.

Indeed, we denote points in the domain of definition of $G$ by
$$
\xi = (u, \om;\, \ep, g, \kappa),
$$ 
where for the critical Crapper solution (where $A = A_0$) we set  
$$
\xi_A = (0, \om_A;\, 0, 0, 0).
$$ 
By construction, we have 
$$
G(\xi_A) = 0,
$$
where $\om_A$ exists by Theorem \ref{thm1}. It is not difficult to see, using technique of section \ref{S.estimates}, that the Fr\'{e}chet derivative $D_{u, \om}G$ of $G$ exists and is continuous on the domain of definition of $G$. At $\xi_A$ it has triangular form  
\[ 
  D_{u, \om}G(\xi_A) = \left(
			  \begin{array}{cc}
                           \t \Ga & 0 \\
			   D_u G_2	& D_\om G_2   
                          \end{array}
		    \right)\,,
\]
where we have set $\t \Ga = D_u G_1$. The operator $(D_\om G_2)\si$ is exactly 
$$
\pi (D_\om G_2)\si = (\pi I + S(z_A))\si 
$$
since $G_2$ is linear in $\om$. By Theorem \ref{thm1}, we know that 
$$
D_\om G_2:\Wcal_{\beta(\rm even)}^{1,p}\rightarrow \Wcal_{\beta(\rm even)}^{1,p}
$$ 
is invertible. On the other hand, we have $\t\Ga = \Ga \circ D_u \Psi^{-1}(0)$ where $\Ga$ is the Fr\'{e}chet derivative of the pure capillary wave operator
$$
\Ga v = q \frac{dv}{d\al} + \cosh(H\theta_A)\, Hv.
$$
The structure of $\Ga: H^1_{odd} \rightarrow L^2_{even}$ is known; it is injective, but not surjective (see \cite{AAW} and \cite{OS} for a detailed discussion). The cokernel of $\Ga$ is spanned by $\cos\theta_A$. In particular, 
\be\label{eq:cokernel-gamma}
\langle\Ga v, \cos\theta_A\rangle = 0.
\ee
We prove an analogous result for $\t \Ga$:

\begin{lemma} The operator $\t \Ga: \t E(\al_*) \rightarrow H^1_{even}$ is injective, but not surjective. Its cokernel is spanned by $\cos\theta_A$.
\end{lemma}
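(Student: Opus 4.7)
The plan is to exploit the factorisation $\tilde \Ga = \Ga \circ L$, where $L := D_u \Psi^{-1}(0)$, so that the properties of $\tilde \Ga$ can be read off from those of $\Ga$ (known from \cite{AAW,OS}) after analysing $L$.

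First I would make $L$ explicit. Differentiating $\Psi(\theta) = e^{-H\theta}\cos\theta$ at $\theta_A$ yields $D\Psi(\theta_A)[\delta\theta] = -u_A\, H\delta\theta - v_A\, \delta\theta$, where $u_A + iv_A = e^{-H\theta_A + i\theta_A} = z_{A,\al}$ is the boundary value of a function analytic in the lower half-plane. Setting $\delta f := \delta\theta + iH\delta\theta$, a short algebraic calculation using the analyticity of $iz_{A,\al}\,\delta f$ gives the Cauchy--Riemann-type identity
\[
\delta u + iH\delta u \,=\, i\, z_{A,\al}\, \delta f\,,
\]
where $\delta u := D\Psi(\theta_A)[\delta\theta]$. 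Since the explicit Crapper formula
$z_{A,\al} = \bigl((1 - Ae^{-i\al})/(1 + Ae^{-i\al})\bigr)^2$
has no real-line zeros for $A = A_0 < 1$, this identity inverts to
\[
L(\delta u) \,=\, \Real\!\left[\,\frac{-i\,(\delta u + iH\delta u)}{z_{A,\al}}\,\right]\,,
\]
a bounded linear operator. A direct matching of constraints then shows that the defining conditions of $\tilde E(\al_*)$---namely $u(\al_*)=0$ (corresponding, via the non-vanishing of $z_{A,\al}(\al_*)$, to $\delta\theta(\al_*)=0$), the complex integral condition (corresponding to the linearised closure condition $\int_0^{\al_*} iz_{A,\al}\delta f\, d\al' = 0$), and the zero-mean condition---are in one-to-one correspondence with the linearisations at $\theta_A$ of the conditions cutting out $M(\al_*)$. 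Hence $L: \tilde E(\al_*) \to T_{\theta_A}M(\al_*)$ is a Banach-space isomorphism.

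Injectivity of $\tilde \Ga$ is then immediate: if $\tilde \Ga u = \Ga(Lu) = 0$, injectivity of $\Ga$ gives $Lu = 0$, and injectivity of $L$ gives $u = 0$. The inclusion $\cos\theta_A \in \operatorname{coker}(\tilde \Ga)$ follows at once from \eqref{eq:cokernel-gamma}: for every $u\in \tilde E(\al_*)$,
\[
\langle \tilde \Ga u,\cos\theta_A\rangle \,=\, \langle \Ga(Lu),\cos\theta_A\rangle \,=\, 0,
\]
so $\tilde\Ga$ is not surjective.

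The remaining task, and the main obstacle, is to show that the cokernel is exactly one-dimensional. Given $h\in H^1_{\rm even}$ with $\langle h,\cos\theta_A\rangle = 0$, the known cokernel structure of $\Ga$ produces a unique $v\in H^1_{\rm odd}$ with $\Ga v = h$, and it remains to verify that this $v$ already lies in $L(\tilde E(\al_*))= T_{\theta_A}M(\al_*)$. Equivalently, the finitely many linear functionals cutting out $T_{\theta_A}M(\al_*)$ inside $H^1_{\rm odd}$ must be shown to be automatically annihilated on the preimage of any $h$ perpendicular to $\cos\theta_A$. I expect this to follow from the particular structure of the critical Crapper wave---in particular from $\theta_A(\al_*)=\pi/2$ (so that $\cos\theta_A$ vanishes at the splash point), from the analyticity identities for $z_{A,\al}$ derived above, and from explicit properties of $\Ga$ linking its cokernel vector to the functionals that define $M(\al_*)$. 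Once this surjectivity step is carried out, the open mapping theorem yields a continuous inverse $\tilde\Ga^{-1}:\{\cos\theta_A\}^\perp \to \tilde E(\al_*)$, completing the proof.
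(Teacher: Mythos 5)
Your injectivity argument via the factorisation $\tilde\Gamma = \Gamma\circ L$ and your observation that $\cos\theta_A$ lies in the cokernel (from $\langle\Gamma v,\cos\theta_A\rangle = 0$) are both correct and match the paper's reasoning; your explicit formula for $L$ is also the right one (it agrees with the paper's $Lv = e^{2H\theta_A}(u_A Hv - vHu_A)$). But the crux of the lemma is showing the cokernel is \emph{exactly} one-dimensional, and you have not established it: you explicitly label it ``the main obstacle'' and end with ``I expect this to follow from\ldots'' The paper resolves this step by an entirely different device. It extracts the principal part $\tilde\Gamma_0 v := q\,e^{H\theta_A}\big(\cos\theta_A\, Hv' - v'\sin\theta_A\big)$, shows $\tilde\Gamma - \tilde\Gamma_0$ is compact (Rellich--Kondrachov), and computes the Fredholm index of $\tilde\Gamma_0$ directly by rewriting $\tilde\Gamma_0 v = g$ as $\Im\big((v'+iHv')e^{-i\theta_A+H\theta_A}\big) = q^{-1}e^{2H\theta_A}g$, identifying the left-hand side as the boundary value of a harmonic function on the Crapper domain $\Omega_A$, and appealing to Proposition~\ref{prop:in} for solvability under the single compatibility condition $\int_{-\pi}^{\pi} e^{2H\theta_A}g\,d\alpha = 0$. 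That compact-perturbation and Dirichlet-problem argument is the technical heart of the proof, and it is entirely absent from your proposal.

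Beyond the gap being unfilled, the route you propose for filling it appears structurally untenable. You plan to solve $\Gamma v = h$ in the full space and then verify $v \in L(\tilde E(\alpha_*))$ automatically. But $L(\tilde E(\alpha_*))$ is a subspace of $H^2_{\mathrm{odd}}$ of positive codimension (the linearised splash and vertical-tangent constraints are nontrivial); if $\Gamma$ is injective with one-dimensional cokernel, restricting its domain to a strictly smaller subspace can only enlarge the cokernel, so the preimage $v$ will fail your extra constraints for some $h\perp\cos\theta_A$. The cancellation that nevertheless makes the cokernel one-dimensional is not visible at the level of ``$\Gamma$ restricted to a subspace''; it requires tracking the Fredholm index of $\tilde\Gamma$ itself with the $H^1$ (not $L^2$) target, which is precisely what the paper's $\tilde\Gamma_0$ computation does. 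You need that argument, or a genuine replacement for it, not a heuristic appeal to ``the particular structure of the critical Crapper wave.''
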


\begin{proof}
First note that $\t \Ga = \Ga \circ D_u \Psi^{-1}(0)$ can be written as a compact perturbation of a Fredholm operator of index $-1$. Indeed, we apply $\Ga$ to   
$$
D_u \Psi^{-1}(0) v = e^{2H\theta_A}(u_A Hv - v H u_A) = e^{H\theta_A}(\cos\theta_A Hv - v \sin \theta_A)
$$
and collect all terms containing derivatives of $v$ in  
$$
\t \Ga_0 v := q e^{H\theta_A}(\cos\theta_A Hv' - v' \sin \theta_A).
$$
Then $\t \Ga - \t \Ga_0$ is compact, since the embedding $H^2 \rightarrow H^1$ is compact by the Rellich-Kondrachov theorem. 

It remains to show that $\t \Ga_0$ is Fredholm of index $-1$. Let $g\in H^1_{even}(\Tbb)$, then 
\be\label{eq:equation-gamma}
\t \Ga_0 v = g \quad \Leftrightarrow \quad \Im((v' + iHv')e^{-i\theta_A + H\theta_A}) = \frac{1}{q}e^{2H\theta_A}g. 
\ee
We claim that $\Im((v' + iHv')e^{-i\theta_A + H\theta_A})$ is the boundary value of a harmonic map defined on the domain $\Om_A$ corresponding to the Crapper solution $z_A$ (cf. \eqref{eq:crapper}) which vanishes as $y\rightarrow -\infty$. Indeed, $v + iHv$ is the boundary value of a function $W = W_1 + i W_2$ holomorphic on the lower half-plane which vanishes as $y\rightarrow -\infty$ and by construction $z_A$ has an inverse $w_A : \Om_A \rightarrow \Scal_\pi \cap \Cbb_-$. In particular, $W\circ w_A$ defines a holomorphic map on $\Om_A$ with complex derivative $(W\circ w_A)' = W'\circ w_A w_A'$. On the boundary $\pa \Om_A$, we have $w_A \circ z_A(\al) = \al$ and $w_A'\circ z_A = e^{-i\theta_A}$. Since $W'(\al, 0) = v' + i Hv'$, we conclude the imaginary part of $(W\circ w_A)'$ is the desired harmonic function. In particular, $\t \Ga_0$ is injective by uniqueness of the Dirichlet problem (in the class of solutions which vanish at infinity). On the other hand, the solution of the equation \eqref{eq:equation-gamma} exists by Proposition \ref{prop:in} (with $\beta = 0$ and $p = 2$) under the assumption that $\int_{-\pi}^\pi e^{2H\theta_A}g d\al= 0$. This last condition is necessary since $\Im(W\circ w_A)'(z_A(\al)) = d(W_2 \circ w_A)/d\al$ is even on $\pa\Om_A$. 

Finally, relation \eqref{eq:cokernel-gamma} implies the cokernel of $\t \Ga$ is also spanned by $\cos\theta_A$, since $\t \Ga$ is just the composition of the linear operator $\Ga$ with $D_u\Psi^{-1}(0)$.
\end{proof}

In particular, the $D_{\theta, \om}G(\xi_A)$ is injective, i.e. $\ker D_{\theta, \om}G(\xi_A) = 0$, but it is not surjective. In order to use the implicit function theorem in this situation, we use an adaptation of the Lyapunov-Schmidt reduction argument. However, we omit the details, as it is a  straightforward generalization of the method in \cite{AAW} (see also \cite{CEG}). We have,

\begin{theorem}
\label{main:thm}
Let $A = A_0$ and let $0< \beta + 1/p < 1/2$. Then, there exist $\epsilon_A>0$, $g_A>0$, $\kappa_A>0$, a unique continuous function 
$$
\kappa_* : B_{\epsilon_A}(0) \times B_{g_A}(0)  \rightarrow B_{\kappa_A}(0)  
$$
such that $\kappa_*(0,0) = 0$ and a unique continuous function 
$$
U_A : B_{\epsilon_A}(0) \times B_{g_A}(0) \times B_{\kappa_A}(0)  \rightarrow O({\al_*})\times \Wcal_{\beta(\rm even)}^{1,p}(\al_*) 
$$
such that $U_A(0,0,0) = (0, \om_A)$ and    
$$
G\big(U_A(\epsilon, g, \ka_*(\epsilon, g));\, \epsilon, g, \ka_*(\epsilon, g)\big) = 0.
$$
\end{theorem}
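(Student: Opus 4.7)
The plan is to perform a Lyapunov--Schmidt reduction around $\xi_A=(0,\om_A;0,0,0)$. Since $D_{u,\om}G(\xi_A)$ is block-lower-triangular and the diagonal block $D_\om G_2=\pi^{-1}(\pi I + S(z_A))$ is invertible on $\Wcal^{1,p}_{\beta(\mathrm{even})}(\al_*)$ by Theorem~\ref{thm1}, and $D_\om G_2$ depends continuously on the remaining arguments in view of the estimates of Section~\ref{S.estimates}, a direct application of the classical implicit function theorem to the equation $G_2=0$ yields a unique continuous map $\om=\Omega(u;\ep,g,\ka)$ in a neighborhood of $(0;0,0,0)$, with $\Omega(0;0,0,0)=\om_A$. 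Substituting this into $G_1$ produces the reduced equation
\[
\widetilde G_1(u;\ep,g,\ka):=G_1\bigl(u,\Omega(u;\ep,g,\ka);\ep,g,\ka\bigr)=0,
\]
whose linearization at the base point equals $\widetilde\Ga$, because the lower-triangular structure ensures that the contribution of $D_\om G_1\cdot D_u \Omega$ vanishes at $\xi_A$ (indeed $D_\om G_1(\xi_A)=0$ as all $\om$-dependent terms in $G_1$ are multiplied by $\ep$).

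Since $\widetilde\Ga:\t E(\al_*)\to H^1_{even}(\Tbb)$ is Fredholm of index $-1$, injective with one-dimensional cokernel spanned by $\cos\theta_A$ (by the preceding lemma), we decompose $H^1_{even}(\Tbb)=\mathrm{Range}(\widetilde\Ga)\oplus\Rbb\,\cos\theta_A$ via the $L^2$-orthogonal projections $P$ (onto the range) and $Q=I-P$, and split
\[
P\widetilde G_1(u;\ep,g,\ka)=0,\qquad Q\widetilde G_1(u;\ep,g,\ka)=0.
\]
For the first equation, $D_u(P\widetilde G_1)(0;0,0,0)=P\widetilde\Ga:\t E(\al_*)\to\mathrm{Range}(\widetilde\Ga)$ is a Banach-space isomorphism by construction, so the standard IFT produces a unique continuous $u=U(\ep,g,\ka)$ in a neighborhood of the origin with $U(0,0,0)=0$.

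It remains to solve the residual scalar equation
\[
\Phi(\ep,g,\ka):=\bigl\langle \widetilde G_1(U(\ep,g,\ka);\ep,g,\ka),\,\cos\theta_A\bigr\rangle_{L^2}=0
\]
for $\ka$ as a function of $(\ep,g)$. Noting that $\Phi(0,0,0)=0$ since $G(\xi_A)=0$, the scalar implicit function theorem reduces the problem to showing $\partial_\ka\Phi(0,0,0)\neq 0$; this is the crux and the only non-routine step. Differentiating, $\partial_\ka G_1=-e^{-H\theta}$, while $G_2$ is independent of $\ka$, so the chain rule gives
\[
\partial_\ka\widetilde G_1(0;0,0,0)=-e^{-H\theta_A}+\widetilde\Ga\cdot\partial_\ka U(0,0,0),
\]
and since the second summand lies in $\mathrm{Range}(\widetilde\Ga)$ and is annihilated by $Q$,
\[
\partial_\ka\Phi(0,0,0)=-\bigl\langle e^{-H\theta_A},\cos\theta_A\bigr\rangle_{L^2}=-\int_{-\pi}^{\pi}u_A(\al')\,d\al'=-2\pi,
\]
where we used $u_A=e^{-H\theta_A}\cos\theta_A=\partial_\al\Re z_A$ together with the periodicity $\Re z_A(\pi)-\Re z_A(-\pi)=2\pi$. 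Hence the non-degeneracy holds and the scalar IFT yields the desired continuous $\ka_*(\ep,g)$ with $\ka_*(0,0)=0$; composing, we set $U_A(\ep,g,\ka):=\bigl(U(\ep,g,\ka),\Omega(U(\ep,g,\ka);\ep,g,\ka)\bigr)$ and the identity $G(U_A(\ep,g,\ka_*(\ep,g));\ep,g,\ka_*(\ep,g))=0$ follows. The main technical obstacle in implementing this outline is verifying the continuous differentiability of $G$ in the weighted functional framework of Section~\ref{S.estimates}---in particular that the nonlinear terms in $G_2$ (the Birkhoff--Rott operator composed with $I(u)$) map smoothly between the claimed spaces---but this is handled by combining Propositions~\ref{prop:BR} and~\ref{prop:doublelayer} with the fact that the open set $O(\al_*)$ enforces precisely the chord-arc and geometric conditions \eqref{reg_assum} with $\mu=1$ needed for those estimates to apply uniformly.
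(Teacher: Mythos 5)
Your proposal is correct and matches the approach the paper intends; the paper itself omits the details of this Lyapunov--Schmidt argument, merely citing \cite{AAW} and \cite{CEG} as precedent, so you have in fact supplied what the paper leaves implicit. The two steps that genuinely carry weight are both handled correctly: first, the observation that $D_\om G_1(\xi_A)=0$ (because every $\om$-dependent term in $G_1$ carries an $\ep$ factor), which is what makes the block-triangular structure collapse the chain-rule terms and identifies $D_u\widetilde G_1(\xi_A)$ with $\widetilde\Ga$; second, the nondegeneracy of the residual scalar equation, where you correctly compute $\partial_\ka\Phi(0,0,0)=-\langle e^{-H\theta_A},\cos\theta_A\rangle_{L^2}=-\int_{-\pi}^{\pi}u_A\,d\al=-2\pi\neq 0$, using $u_A=e^{-H\theta_A}\cos\theta_A=\partial_\al\Re z_A$ and the normalization $\int_{-\pi}^{\pi}e^{-H\theta_A+i\theta_A}\,d\al=2\pi$. (Note here that $u_A$ itself does not lie in $\widetilde E(\al_*)$, only the perturbation $u$ does, so there is no contradiction with the zero-mean constraint defining $\widetilde E(\al_*)$.) Your decomposition of $H^1_{even}$ into $\mathrm{Range}(\widetilde\Ga)\oplus\Rbb\cos\theta_A$ via $L^2$-projections is legitimate since, by \eqref{eq:cokernel-gamma} and codimension counting, the range is precisely the $L^2$-orthogonal complement of $\cos\theta_A$ inside $H^1_{even}$. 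The one place where you are explicitly sketchy --- the continuous Fr\'echet differentiability of $G$ between the stated weighted spaces on the open set $O(\al_*)$ --- is also where the paper itself just gestures at ``the techniques of Section~\ref{S.estimates}'', so you are no less rigorous than the source; it would be worth flagging that Proposition~\ref{prop:BR} as stated controls boundedness of the linear Birkhoff--Rott operator for a fixed curve, and what is additionally needed is local Lipschitz dependence of $S(z)$ on $u$ (equivalently on $z=I(u)$) uniformly over $O(\al_*)$, which requires a perturbative version of those weighted estimates rather than a direct invocation.
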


\begin{remark}
The implicit function theorem does not exclude the existence of traveling gravity-capillary waves close to the reference Crapper wave with negative upper-fluid density $\rho_1$ (corresponding to negative $\epsilon$). Those solutions are (presumably) unphysical.   
\end{remark}

\begin{section}{The construction of harmonic functions}\label{S.aux}

In the following two propositions, we solve Dirichlet problem on $\Om^c$ respectively on $\Om$ as required for the proof of Theorem \ref{thm3}. It will be more convenient to work with bounded domains, hence in this section $\Om$ is a bounded 'splash'-domain, symmetric w.r.t $x$-axis with the  splash point situated at $z_* = 0$, while $\Om^c = \Om_1 \cup \Om_2$ with $\Om_1$ bounded and situated in the right-half plane. The proof essentially follows the one given in \cite{MazSo}. In view of the future application to the dynamical case, we give quantitative bounds on the norms of solutions in terms of an appropriate weighted norm $|||z|||$ taking into account the failure of the arc-chord condition at the singular point. 

In both cases, for $\Om$ and $\Om_j$, the construction is reduced to solving the Dirichlet problem on the horizontal strip 
$$
\Pi := \{w:=\tau + i\nu \,:\, |\nu|< \pi/2\}
$$ 
by means of a suitable conformal transformation. In fact, let $\Phi_\pm$ be functions defined on the boundary $\pa\Pi_\pm :=\{\tau \pm i\}$ and consider the Dirichlet problem
$$
\begin{cases}
   \Delta \Phi = 0 \quad &\text{in $\Pi$}, \\
   \Phi = \Phi_\pm \quad &\text{on $\pa\Pi_\pm$}. 
\end{cases}
$$
It is not difficult to see, applying Fourier transform with respect to $\tau$ on the Laplace operator, that (modulo constants in front of each term) the Fourier transform of the solution to the above boundary value problem satisfies  
\be\label{eq:solF}
\widehat{\Phi}(\xi, \nu) \simeq (\h\Phi_+ + \h\Phi_-)(\xi) \frac{\cosh(\xi \nu)}{\cosh(\pi\xi/2)} + (\h\Phi_+ - \h\Phi_-)(\xi) \frac{\sinh(\xi \nu)}{\sinh(\pi\xi/2)},
\ee
with normal derivative
\be\label{eq:nsymF}
\aligned
\widehat{\pa_n\Phi}(\xi, \pm \pi/2) = \pm \widehat{\pa_\nu\Phi}(\xi, \pm \pi/2) &\simeq (\widehat{\Phi'}_+ + \widehat{\Phi'}_-)(\xi) \tanh(\pi\xi/2) \\
& - (\widehat{\Phi'_+} - \widehat{\Phi'_-})(\xi) \bigg(\coth (\pi\xi/2) - \frac{2}{\pi\xi}\bigg) + (\widehat{\Phi}_+ - \widehat{\Phi}_-)(\xi).
\endaligned
\ee
Note that $\coth $ is unbounded at zero and is therefore not a Fourier multiplier on weighted Lebesgue spaces with power type weights.

In the following lemma we obtain quantitative estimates on the conformal maps used in the proofs of both propositions below. Let $\al \rightarrow z(\al)$ for $\al \in [-\pi, \pi]$ be the arc-length parametrization of $\Ga$ and let $\varepsilon>0$ be fixed. Let furthermore $v$ be a bounded function on $[-\pi, \pi]$ such that $v(\al)=O(|\al \pm \al_*|)$ when $\al \in B_\varepsilon(\pm\al_*)$ and $v(\al) = O(1)$ otherwise. Then, we define 
$$
|||z||| :=  \|v(\al)^{1-\mu}\,\theta_\al\|_{H^1(\Tbb)} + \sup_{\al \in B_\varepsilon(\al_*)} \frac{1}{|v(\al)|^{1- \mu}|\theta_\al(\al)|} + \Fcal_{\varepsilon/2}(z) 
$$
where $\theta$ is the tangent angle and $\Fcal_{\epsilon/2}$ has been defined in \eqref{arcchord}. 

When $\de>0$ as in the introduction to section \ref{S.estimates}, it is not difficult to see that $\de \sim \varepsilon$ with the constant depending only on $|||z|||$.


\begin{lemma}\label{lem:r} Let $f: \Dbb\rightarrow \Om$ be a conformal map given by the Riemann mapping theorem where $f(\pm 1)=z_*$. Then    
 \be\label{est:rOm}
 |f'(\zeta)| \sim 1.
 \ee
 In particular, the conformal map $F:\Pi \rightarrow \Om$ defined by $F(w):= f\circ\tanh(w/2)$ satisfies
 \be\label{est:stripOm}
 \frac{|F'(w)|}{|1-\tanh^2(w/2)|} \sim 1, \qquad  \frac{|F(w)|}{|1-\tanh^2(w/2)|} \sim 1.
 \ee
 
Let now $f:\Dbb\rightarrow\Om_1$ be the conformal map given by the Riemann mapping theorem with $f(1)=z_*$. Then,    
\be\label{est:fOmc}
|f(\zeta)| \sim  \big|\log|1 - \zeta|\big|^{-1/\mu} \qquad \forall \zeta\in \overline{\Dbb}\cap B_{\t \de}(1)
\ee
for some $O(e^{-\exp(P|||z|||)}) = \t\de < 1$. Moreover, $|f|$ is bounded away from zero iff $|\zeta - 1|$ is bounded away from zero. On the other hand, the derivative satisfies 
\be\label{est:ffder}
|f'(\zeta)(1-\zeta)| \sim |f(\zeta)|^{\mu + 1} 
\ee
and therefore $|f'(\zeta)|\rightarrow \infty$ as $\zeta \rightarrow 1$. The corresponding conformal mapping $F:\Pi \rightarrow \Om_1$ satisfies  
\be\label{est:stripOmc}
|F(w)| \, \sim  \, |w|^{-1/\mu}, \qquad |F'(w)| \, \sim  \, |w|^{-1-1/\mu}, \qquad \forall w\in\overline{\Pi}: \Re w > |\log\t \de/4|
\ee
while outside of any neighborhood of $+\infty$, we have 
\be\label{est:stripOmFar}
|F(w)| \sim 1, \qquad \frac{|F'(w)|}{|1-\tanh^2(w/2)|} \sim 1 .
\ee

All the equivalence constants grow at most as a double exponential of $P(|||z|||)$ where $P$ is some, possibly different polynomial in each estimate.

\end{lemma}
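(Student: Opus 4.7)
I would split the proof according to the two distinct local geometries at $z_* = 0$. For the splash domain $\Om$, the point $z_*$ corresponds to two prime ends on $\partial\Dbb$ (preimages $\pm 1$), and from each lobe of $\Om$ approaching $z_*$ the boundary is a single smooth arc—the full graph $\{y=+\ka(x)\}$ or $\{y=-\ka(x)\}$, each $\Ccal^{1,\mu}$ by $\ka''(x)\sim |x|^{\mu-1}$—approaching $z_*$ with opening angle $\pi$. Away from $\pm 1$, $\partial\Om$ is $\Ccal^{2,\lambda}$ and uniformly chord-arc with constants controlled by $|||z|||$. I would invoke the Kellogg--Warschawski theorem in local charts near $\zeta=\pm 1$ and the standard setting in the interior to deduce $|f'(\zeta)|\sim 1$ on $\overline\Dbb$. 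The strip estimates \eqref{est:stripOm} then follow immediately from $F'(w)=\tfrac12 f'(\tanh(w/2))(1-\tanh^2(w/2))$ together with the bi-Lipschitz bound on $f$ and $f(\pm 1)=0$ (using that $|1\pm\tanh(w/2)|$ stays bounded away from zero near the opposite endpoint).

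\textbf{The cusp domain $\Om_1$.} The heart of the lemma is the cusp analysis, which I would organize around the strip parametrization $F=f\circ\tanh(w/2):\Pi\to\Om_1$, with $F(+\infty)=z_*=0$. First, on the real axis I establish the ODE $F'(\tau)\sim\ka(F(\tau))$ by a conformal modulus argument: the modulus of the substrip $\{\tau<\Re w<\tau+d\tau\}\subset\Pi$ is $\pi/d\tau$, which by conformal invariance must match the modulus of its image in $\Om_1$, approximately the slab $\{\Re z\approx F(\tau),\,|\Im z|<\ka(F(\tau))\}$ of modulus $2\ka(F(\tau))/dx$. Using $\ka(x)\sim x^{1+\mu}$, the ODE $F'\sim F^{1+\mu}$ integrates to $F(\tau)\sim\tau^{-1/\mu}$ and hence $F'(\tau)\sim\tau^{-1-1/\mu}$, as claimed. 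Extension to complex $w$ in the half-strip $\{\Re w>M\}\subset\Pi$ proceeds via Phragmen--Lindelof applied to the bounded harmonic functions $\log|F(w)|+(1/\mu)\log|w|$ and $\log|F'(w)|+(1+1/\mu)\log|w|$, whose boundary values on the three sides of the half-strip are controlled by the real-axis asymptotics together with the known behavior of $F$ on the walls $\{\Im w=\pm\pi/2\}$ (which map to the cusp arcs $\Ga^\pm$). To translate back to disk coordinates, $w=\log\!\big((1+\zeta)/(1-\zeta)\big)$ gives $|w|\sim|\log|1-\zeta||$ near $\zeta=1$, so $|f(\zeta)|=|F(w)|\sim|\log|1-\zeta||^{-1/\mu}$; and $f'(\zeta)=2F'(w)/(1-\zeta^2)$ yields $|f'(\zeta)(1-\zeta)|\sim|F'(w)|\sim|F(w)|^{1+\mu}=|f(\zeta)|^{1+\mu}$. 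The far-field estimates \eqref{est:stripOmFar} for $F$ outside a neighborhood of $+\infty$ follow from chord-arc analysis exactly as in the $\Om$ case.

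\textbf{Main obstacle.} The principal difficulty I anticipate is to make the dependence of every equivalence constant on $|||z|||$ quantitative, with the stated double-exponential bound $\tilde\delta=O(e^{-\exp(P|||z|||)})$. Two compounded exponentials arise: one from inverting the logarithmic relation $|w|\sim|\log|1-\zeta||$ to obtain the threshold $\tilde\delta$ below which the model-cusp approximation becomes valid, and another from the Phragmen--Lindelof/Beurling--Ahlfors-type distortion estimates comparing $F$ to the model cusp, whose constants depend exponentially on the chord-arc norm $\Fcal_{\varepsilon/2}(z)$. Following the framework of Maz'ya and Soloviev~\cite{MazSo}, the bulk of the technical work lies in propagating these quantitative bounds through the extremal-length and Phragmen--Lindelof comparisons; the qualitative cusp asymptotics themselves are classical.
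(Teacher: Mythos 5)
Your approach to both estimates is genuinely different from the paper's, which does not invoke local regularity theory at the boundary singularity or any modulus/extremal-length argument. Instead the paper constructs an explicit auxiliary conformal map $\psi$ that flattens the singularity \emph{before} the Riemann map is applied: $\psi(z) = \sqrt{z - x_0}$ for the splash domain $\Om$ (turning the two prime ends at $z_*$ into two ordinary regular boundary points of a new domain), and $\psi(z) = \exp(-a z^{-\mu})$ for the cusp domain $\Om_1$ (turning the cusp into a $\Ccal^{1,\lambda}$ boundary point). After checking that the images $\t\Om$ are bounded $\Ccal^{1,\lambda}$ chord-arc domains with $|||\t z|||$ controlled by $P(|||z|||)$ resp.\ $e^{P(|||z|||)}$, the paper invokes the global quantitative Kellogg--Warschawski-type bound \eqref{est:fder} from~\cite{CCG} for the Riemann map $\t f$ of the regularized domain, and pulls back through the explicit formula for $\psi^{-1}$; this is also how the double exponential arises (the flattened chord-arc norm of the cusp domain is already $e^{P(|||z|||)}$, and~\eqref{est:fder} introduces a second exponential). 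Your proposal---local Kellogg--Warschawski at the prime ends for $\Om$, Ahlfors/Warschawski strip-map asymptotics for $\Om_1$---is a plausible classical alternative, but note that it is \emph{not} the Maz'ya--Soloviev framework as you suggest: \cite{MazSo} uses precisely the explicit flattening maps that the paper adopts. The paper's route has the concrete advantage that the quantitative $|||z|||$-tracking is done once, on a regularized domain where the constants are already available from~\cite{CCG}.

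Two steps of your sketch are incomplete as stated. First, the extremal-length comparison gives an Ahlfors-distortion-type \emph{integral} relation for $F^{-1}$; this suffices to conclude $|F(\tau)|\sim\tau^{-1/\mu}$ but says nothing pointwise about $F'$. Passing from $|F(\tau)|\sim\tau^{-1/\mu}$ to $|F'(\tau)|\sim\tau^{-1-1/\mu}$ requires Warschawski's separate theorem on derivatives of strip maps, valid under a Dini-type condition on the boundary inclination, which is where the hypothesis $|x|^{1-\mu}\ka''(x)\to k$ actually enters and cannot be skipped. Second, your Phragm\'en--Lindel\"of extension presumes ``the known behavior of $F$ on the walls $\{\Im w=\pm\pi/2\}$,'' but those wall asymptotics are precisely part of what must be proved and are not supplied by the real-axis modulus argument alone; Warschawski's theorem, correctly applied, gives the asymptotics of $F$ and $F'$ uniformly on the closed half-strip in one stroke and makes the Phragm\'en--Lindel\"of step unnecessary.
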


Now we can state and prove both propositions:

\begin{proposition}
\label{prop:in}
Let $0<\beta + p^{-1}< \mu$ and let $\phi\in \Wcal^{1,p}_{\beta(even)}(\Ga)$. Then the Dirichlet problem 
$$
\begin{cases}
\Delta u = 0& \quad \mathrm{in } \,\,\Om^c, \\
 u = \phi& \quad \mathrm{on } \,\, \Ga,
\end{cases}
$$
has a solution $u$ harmonic at $\infty$, satisfying
\be\label{ineq:norm}
\|\pa_n u\|_{\Lcal_{p,\beta}(\Ga)}\lesssim \|\phi\|_{\Wcal^{1,p}_{\beta}(\Ga)}
\ee 
where the constant grows at most as a double exponential of $P(|||z|||)$ where is some polynomial.  
\end{proposition}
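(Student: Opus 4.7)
The plan is to solve the Dirichlet problem separately on each cusp component $\Om_1,\Om_2$ of $\Om^c$ by pulling back to the horizontal strip $\Pi$ via the conformal map $F_j\colon\Pi\to\Om_j$ supplied by Lemma~\ref{lem:r}, normalized so that $\tau\to+\infty$ corresponds to the cusp tip $z_*$. Writing $\tilde u:=u\circ F_j$ and $\tilde\phi_\pm(\tau):=\phi\bigl(F_j(\tau\pm i\pi/2)\bigr)$, the Dirichlet problem on $\Om_j$ reduces to the Dirichlet problem for $\tilde u$ on $\Pi$ with boundary data $\tilde\phi_\pm$, which is solved explicitly by the Fourier formulas \eqref{eq:solF}--\eqref{eq:nsymF}. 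The evenness of $\phi$ combined with the reflection symmetry of $\Om_j$ across the $x$-axis forces $\tilde\phi_+=\tilde\phi_-$, so the expression for $\partial_\nu\tilde u$ on $\partial\Pi_\pm$ collapses and only the single Fourier multiplier $\tanh(\pi\xi/2)$ acting on $\tilde\phi'_+$ survives.

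To obtain the weighted estimate, the Jacobian relations $\partial_n u=|F_j'|^{-1}\partial_\nu\tilde u$ and $ds_q=|F_j'|\,d\tau$ let us rewrite both sides as weighted integrals on $\partial\Pi\simeq\mathbb{R}$. The quantity to bound acquires the weight $|F_j|^{\beta p}|F_j'|^{1-p}\,d\tau$, which agrees with the weight for the derivative part of the $\Wcal^{1,p}_\beta$-norm on the right-hand side, while the $L^p$ part of that norm carries the heavier weight $|F_j|^{\beta p}|F_j'|\,d\tau$. Using the asymptotics from Lemma~\ref{lem:r}, namely $|F_j(\tau\pm i\pi/2)|\sim|\tau|^{-1/\mu}$ and $|F_j'(\tau\pm i\pi/2)|\sim|\tau|^{-1-1/\mu}$ near the cusp end, the substitution $s\simeq\tau^{-1/\mu}$---precisely the change of variable restoring the arc-length scale of $\Ga$ near $z_*$---converts both weights into the standard power weight $s^{\beta p}$ on a neighborhood of $s=0$. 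The admissibility range $0<\beta+1/p<\mu$ coincides exactly with the Muckenhaupt $A_p$ range for $s^{\beta p}$.

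Within this framework, boundedness of the smooth bounded symbol $\tanh(\pi\xi/2)$ on the relevant weighted $L^p$ space follows from the weighted Mihlin-type theorem recalled in the Appendix. Away from the cusp end of the strip the Jacobian of $F_j$ is of order unity, so only the classical unweighted $L^p$ theory is needed there. For the unbounded component $\Om_2$, harmonicity at infinity is automatic from boundedness of $\tilde u$ on the strip via a Phragmen--Lindelof argument on the far end, and the double-exponential dependence of the constant on $|||z|||$ is inherited directly from the quantitative bounds for $F_j$ in Lemma~\ref{lem:r}.

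The main technical obstacle is the mismatch between the Fourier variable $\tau$ on the strip and the natural weight structure on $\Ga$ near the cusp: a direct interpretation of the transported weight as a function of $\tau$ gives a non-Muckenhaupt quantity outside the range $\beta+1/p>1$, so the substitution $s\simeq\tau^{-1/\mu}$ is essential. Together with the symmetry-induced cancellation that eliminates all but the $\tanh$ multiplier, this reduction is the heart of the argument, after which the remaining estimates are a routine application of weighted multiplier bounds.
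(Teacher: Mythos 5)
Your overall plan (pull back to the strip $\Pi$ via $F_j$, use the Fourier formula \eqref{eq:nsymF} with the $\tanh(\pi\xi/2)$ multiplier, and exploit evenness to kill the $\coth$ term) matches the half of the paper's proof that treats boundary data supported near the cusp tip. However, you omit the other half entirely, and the omission is not cosmetic. The paper splits $\phi$ into a piece vanishing on $\Ga_1\cap B_{\de_1}(z_*)$ and a piece vanishing on $\Ga_1\setminus B_{\de_1}(z_*)$; the first piece is handled on the disk $\Dbb$ via the standard Poisson/Hilbert kernel (no weights needed at all, since $|f'|\sim 1$ on $\Dbb$ by \eqref{est:rOm}), and only the second piece is handled on $\Pi$.

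The reason this decomposition is essential is that your claim ``away from the cusp end of the strip the Jacobian of $F_j$ is of order unity'' is false. At the far end $\Re w\to-\infty$ of $\Pi$, Lemma~\ref{lem:r} gives $|F'(w)|\sim|1-\tanh^2(w/2)|\sim e^{-|\tau|}$, which decays exponentially. Consequently the transported weight $|F_j|^{\beta p}|F_j'|^{1-p}$ is not a polynomial weight after the substitution $s\sim\tau^{-1/\mu}$: it is polynomial near $\tau=+\infty$ but grows like $e^{(p-1)|\tau|}$ near $\tau=-\infty$. Neither Lemma~\ref{lem:aux1} (polynomial weights) nor Lemma~\ref{lem:aux2} (pure exponential weights) applies to this hybrid weight on all of $\Rbb$, so the multiplier estimate you invoke does not go through without first localizing. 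The paper avoids the issue precisely by ensuring that, in the strip case, the boundary data vanishes for $\tau<|\log\t\de/4|$, so that $\pa_n\Phi$ decays exponentially at the far end and cancels the factor $|F'|^{-1}\sim e^{|\tau|}$ in $\pa_n u=|F'|^{-1}\pa_n\Phi\circ F^{-1}$, yielding boundedness on the compact set $\Ga_1\setminus B_{2\de_1}(z_*)$ rather than a global weighted multiplier bound. You need to restore this decomposition (or supply a separate argument for the far end) before the proof is complete.
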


\begin{proof}
It is enough to consider the bounded component $\Om_1$ of $\Om^c$, the other can be transformed to a bounded domain via the conformal map $(z-x_1)^{-1}$ where $x_1\in \Om_1$. Let $f:\Dbb \longrightarrow \Om_1$ with $f(1) = z_*$ and $\t \de>0$ be as in Lemma \ref{lem:r}. Inspecting the proof of that same Lemma it is not difficult to see that we can choose $\de_1>0$ such that 
\be\label{cond:ngbh}
\Ga_1 \cap B_{\de_1}(z_*) \subseteq \Ga^+\cup \Ga^- \qquad f^{-1}(B_{\de_1}(z_*))\subseteq \pa\Dbb\cap B_{\t\de}(1).
\ee 
We first assume $\phi \equiv 0$ on $\Ga_1\cap B_{\de_1}(z_*)$. Then, 
$$
\aligned
&\int_{\pa\Dbb} |\phi\circ f(\zeta)|^p \, ds_\zeta = \int_{\Ga \setminus B(z_*)} |\phi(z)|^p |(f^{-1})'(z)| \, ds_z, \\
&\int_{\pa\Dbb} |\pa_{s_\zeta}(\phi\circ f)|^p \, ds_\zeta = \int_{\Ga \setminus B(z_*)} |\pa_{s_z}\phi(z)|^p |(f^{-1})'(z)|^{1-p} \, ds_z,
\endaligned
$$
hence Lemma \ref{lem:r} implies 
$$
 \phi\circ f \in W^{1,p}(\pa \Dbb).
$$ 
In particular, the normal derivative of the function $\Phi$ harmonic in $\Dbb$ with boundary value $\phi\circ f$ is given by 
$$
\pa_n \Phi(e^{is}) = \frac{1}{2\pi}\int \frac{d}{dt} \phi\big(f(e^{it})\big) \cot\bigg(\frac{s-t}{2}\bigg)\, dt.
$$
We set $u := \Phi \circ f^{-1}$. Estimates \eqref{est:fOmc}-\eqref{est:ffder} then imply
$$
\aligned
\|\pa_n u\|_{\Lcal_{p,\beta}(\Ga)} &= \Bigg(\int_{\pa \Dbb} |f(\zeta)|^{p\beta}|f'(\zeta)|^{1-p}\, |\pa_{n}\Phi(\zeta)|^p \, ds_\zeta\Bigg)^{1/p} \\
\, &\lesssim  \, \|\pa_n \Phi\|_{L^p(\pa \Dbb)} \, \lesssim  \, \| (\phi \circ f)'\|_{L^p(\pa \Dbb)}\,\lesssim \, \| \phi \|_{\Wcal^{1,p}_{\beta}(\Ga)}
\endaligned
$$
for any $\beta \in \Rbb$.

Let now $\phi\in\Wcal^{1, p}_{\beta(even)}(\Ga_1)$ be such that $\phi \equiv 0$ on $\Ga\setminus B_{\de_1}(z_*)$ and let $F:\Pi  \rightarrow \Om_1$ be the conformal map  $F(w) := f\circ \tanh(w/2)$. We may assume $\phi(0) = 0$; otherwise it only adds an overall constant to the required harmonic function. We set $\Phi_\pm(\tau) :=(\phi \circ F)(\tau \pm i\pi/2)$. It is not difficult to see that $F(w) \in \Ga_1\cap B_{\de_1}(z_*)$ implies $\Re w > \t \de $, cf. assumption \eqref{cond:ngbh}, hence asymptotic estimates \eqref{est:stripOmc} imply  
$$
\begin{alignedat}{2} 
\phi &\in \Lcal_{p, \beta-1}(\Ga_1) \quad &&\Leftrightarrow \quad \Phi_\pm \in \Lcal_{p,\, \mu^{-1} - \al}(\Rbb_+, w),\\ 
\phi' &\in \Lcal_{p, \beta}(\Ga_1) \quad &&\Leftrightarrow \quad \Phi_\pm' \in \Lcal_{p,\, 1 + \mu^{-1} - \al}(\Rbb_+, w),\\ 
\end{alignedat}
$$
where as usual we set $\al = \mu^{-1}(\beta + p^{-1}) + p^{-1}$ and the weight function is defined by $w(\tau) := (1 + \tau^2)^{1/2}$. 

As outlined in the beginning of this section, we use Fourier transform to solve Dirichlet problem on the strip. The harmonic extension $\Phi$ of $\phi\circ F$ is given by \eqref{eq:solF} and we first verify that 
$$
\pa_n\Phi \in \Lcal_{p, 1-\al + \mu^{-1}}(\Rbb,\, u)
$$ 
with the Fourier transform of $\pa_n\Phi$ given by the symmetric part of \eqref{eq:nsymF}. Note that if we wanted to prove the claim in the general case, we would additionally need 
$$
 \Phi_+ - \Phi_- \in \Lcal_{p,\, 1 + \mu^{-1} - \al}(\Rbb_+, w) \quad \Leftrightarrow \quad \phi_+ - \phi_- \in \Lcal_{p, \beta - \mu - 1}(\Ga^+\cup\Ga^-)
$$
(cf. \cite{MazSo}). Now the inverse Fourier transform of $\tanh (\pi\xi/2)$ is modulo a constant factor given by $(\sinh \tau)^{-1}$. It satisfies the assumptions of Lemma \ref{lem:aux1} (see Appendix) for any $J\geq 0$ and moreover, for a suitable $J \geq 0$ 
$$
0 < (1 - \al + \mu^{-1}) +  p^{-1} < 1 + \mu^{-1} \leq 1 + J
$$ 
so the claim follows from Lemma \ref{lem:aux1}. We set $u:=\Phi\circ F^{-1}$. It remains to show $\pa_n u\in \Lcal_{p,\beta}(\Ga_1)$. By construction we have $\pa_n u\in \Lcal_{p,\beta}(\Ga_1 \cap B_{2\de_1}(z_*))$. On the other hand, since $\phi'$ vanishes on $\Ga_1\setminus B_{\de_1}(z_*)$, it is not difficult to see that $\pa_n u = \frac{1}{F'\circ F^{-1}}\pa_n \Phi\circ F^{-1}$ is bounded on $\Ga_1\setminus B_{2\de_1}(z_*)$ and therefore $\pa_n u\in \Lcal_{p,\beta}(\Ga_1\setminus B_{2\de_1}(z_*))$.

\end{proof}
\begin{proposition}
\label{prop:out}
Let $0<\beta + p^{-1}<1$ and let $\psi\in \Lcal_{p,\beta}(\Ga)$. Then the Neumann problem 
$$
\begin{cases}
\Delta v = 0& \quad \mathrm{in } \,\,\Om, \\
 \pa_n v = \psi& \quad \mathrm{on } \,\, \Ga,
\end{cases}
$$
has a solution $v$ satisfying 
$$
\|v\|_{\Wcal^{1,p}_{\beta}(\Ga)}\, \lesssim \,  \|\psi\|_{\Lcal_{p,\beta}(\Ga)}.
$$
and the constant grows at most as an exponential of some polynomial of $|||z|||$. 
\end{proposition}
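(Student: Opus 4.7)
The strategy parallels that of Proposition \ref{prop:in}, but is most naturally carried out on the disk rather than the strip, since Lemma \ref{lem:r} gives the bilipschitz estimate $|f'|\sim 1$ for the conformal map $f:\Dbb\to\Om$ with $f(\pm 1)=z_*$. Setting $\Phi:=v\circ f$, the pulled-back Neumann problem reads $\Delta\Phi=0$ in $\Dbb$ and $\pa_n\Phi=\eta$ on $\pa\Dbb$, where $\eta(\zeta):=|f'(\zeta)|\,\psi(f(\zeta))$. Since $f(\pm 1)=z_*$ and $|f'|\sim 1$, one has $|f(\zeta)|\sim|\zeta-1|\,|\zeta+1|$ near the two singular points, so the hypothesis $\psi\in\Lcal_{p,\beta}(\Ga)$ translates, with constants controlled by $|||z|||$, to $\eta\in L^p(\pa\Dbb,|\zeta^2-1|^{\beta p})$.

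Next, I would solve the disk Neumann problem via Cauchy-Riemann: on $\pa\Dbb$ the tangential derivative of the boundary trace of $\Phi$ equals (up to sign) the periodic Hilbert transform $H\eta$. The underlying solvability constraint $\int_{\pa\Dbb}\eta\,ds=0$ corresponds exactly to $\int_\Ga\psi\,ds=0$; in the application to Theorem \ref{thm3} this is automatic, since $\psi=\pa_n u^e$ and the divergence theorem applied to $u^e$ on $\Om^c$ gives zero total flux thanks to the decay of $u^e$ at infinity. The additive ambiguity in $\Phi$ is harmless for the $\Wcal^{1,p}_{\beta}$ estimate.

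Since $|\zeta^2-1|^{\beta p}=|\zeta-1|^{\beta p}|\zeta+1|^{\beta p}$ is an $A_p$ Muckenhaupt weight precisely when $0<\beta+p^{-1}<1$ (holding simultaneously at both singular points $\zeta=\pm 1$), the periodic Hilbert transform $H$ is bounded on $L^p(\pa\Dbb,|\zeta^2-1|^{\beta p})$. Integrating along $\pa\Dbb$ then yields $\Phi|_{\pa\Dbb}\in W^{1,p}(\pa\Dbb,|\zeta^2-1|^{\beta p})$ with the desired control. Defining $v:=\Phi\circ f^{-1}$ on $\Om$ and using the bilipschitz equivalence from \eqref{est:rOm} to transfer norms between $\pa\Dbb$ and $\Ga$, I obtain $\|v\|_{\Wcal^{1,p}_\beta(\Ga)}\lesssim\|\psi\|_{\Lcal_{p,\beta}(\Ga)}$.

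The main technical obstacle will be tracking the implicit constant through each step as $|||z|||\to\infty$: the bilipschitz constants of $f$, the $A_p$ constant of the weight, and the norm-equivalence constants between $\Ga$ and $\pa\Dbb$ all degenerate as $|||z|||$ grows. Using the quantitative estimates of Lemma \ref{lem:r}, where these constants are shown to grow at most as a double exponential of a polynomial in $|||z|||$, one recovers the stated exponential-of-polynomial dependence. If instead one wished to follow the strip formulation of Proposition \ref{prop:in} verbatim (via $F:\Pi\to\Om$), the Fourier multiplier analysis of \eqref{eq:nsymF} applied to the Neumann-to-Dirichlet map would give the same result, at the cost of dealing with two distinct singular boundary points at $\tau=\pm\infty$; the disk route avoids this by handling both points simultaneously with the Muckenhaupt weight $|\zeta^2-1|^{\beta p}$.
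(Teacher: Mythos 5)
Your approach is correct, and it is a genuinely different route from the paper's. The paper reduces the Neumann problem to a Dirichlet problem for the conjugate function, transfers to the horizontal strip $\Pi$ via $F=f\circ\tanh(w/2)$, and then has to verify that the Dirichlet-to-Neumann symbols $\tanh(\pi\xi/2)$ and $\coth(\pi\xi/2)-2/(\pi\xi)$ are Fourier multipliers on $L^p$ with exponential weight $e^{-(1-\beta-p^{-1})|\tau|}$ --- this requires the Schott multiplier theorem and the rather delicate fractional-series computation of Lemma~\ref{lem:aux2}. You instead stay on $\Dbb$, observe that \eqref{est:rOm}--\eqref{est:stripOm} say precisely $|f'|\sim 1$ and $|f(\zeta)|\sim|\zeta^2-1|$, so that the weight $|q|^{\beta}$ pulls back to the Muckenhaupt weight $|\zeta^2-1|^{\beta}$, and the only analytic fact you then need is boundedness of the periodic Hilbert transform on $L^p(\pa\Dbb,|\zeta^2-1|^{\beta p})$, whose $A_p$ condition is exactly $0<\beta+p^{-1}<1$. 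This is more elementary and avoids Lemma~\ref{lem:aux2} altogether; the paper's detour through the strip is really forced by Proposition~\ref{prop:in}, where the cusp in $\Om_1$ makes $f'$ degenerate at the boundary and the disk route is no longer available, so the strip machinery is set up once and reused for both. Two minor points: the solvability condition $\int_\Ga\psi\,ds=0$ is implicit in the paper's construction of $\phi$ as well (not just in your version), and you are right to flag that it holds in the application; and for the constant, the estimates of Lemma~\ref{lem:r} that you actually invoke --- those for $\Om$, not $\Om_j$ --- give a single exponential of a polynomial in $|||z|||$ (the double exponential only appears for the cusp domains), which matches the proposition's statement, so your phrasing there slightly overstates what you need but lands in the right place.
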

\begin{proof}
By the Cauchy-Riemann equations, finding a solution of the above Neumann problem is equivalent to solving the corresponding Dirichlet problem with boundary value $\phi\in \Wcal^{1, p}_{\beta}(\Ga)$, where $\frac{d\phi}{ds} = \psi$ and $\phi(z_*) = 0$. 

Let $F:\Pi \rightarrow \Om$ be the conformal map from Lemma \ref{lem:r}. Then, estimates \eqref{est:stripOm} imply 
$$
\aligned
|F(w)|^{\beta - 1}|F'(w)|^{p^{-1}} &\sim \big|1 - \tanh^2 (w/2)\big|^{ - 1 + (\beta + p^{-1})}\\
|F(w)|^{\beta}|F'(w)|^{p^{-1} - 1} &\sim \big|1 - \tanh^2 (w/2)\big|^{ - 1 + (\beta + p^{-1})}
\endaligned
$$
and we have
$$
\big|1 - \tanh^2 (w/2)\big| \,\sim\, e^{-|\tau|}, \qquad w = \tau \pm i\pi/2.
$$ 
In particular,
$$
\phi \in \Lcal_{p, \beta-1}(\Ga), \,\, \phi'\in\Lcal_{p, \beta}(\Ga) \quad \Leftrightarrow \quad \Phi_\pm, \, \, \Phi'_\pm \in \Lcal_{p, -1 + (\beta + p^{-1})}(\Rbb, \, e^{-|\tau|})
$$
where we have set $\Phi_\pm(\tau) = \phi\circ F(\tau \pm i\pi/2)$.

Let $\Phi$ denote the harmonic function on $\Pi$ with boundary value $\Phi_\pm$ on $\pa\Pi_\pm$, whose Fourier transform is given by \eqref{eq:solF} and that of its normal derivative by \eqref{eq:nsymF}. By Lemma \ref{lem:aux2} (see Appendix), both $\tanh(\pi\xi/2)$ and $\coth(\pi\xi/2) - \frac{2}{\pi\xi}$ are Fourier multipliers for exponential weights $e^{\pm d|\tau|}$ provided $d<1$, hence $\pa_n\Phi\in \Lcal_{p, - 1 + (\beta + p^{-1})}(\Rbb, \, e^{-|\tau|})$ and, in particular, $\pa_n(\Phi\circ F^{-1})\in \Lcal_{p,\beta}(\Ga)$. The rest is now straightforward.

\end{proof}

In both cases, we use a suitable conformal map $\psi$ in order to map $\Om$ to a regular domain $\t \Om$, that is, a bounded $\Ccal^{k,\lambda}$-domain where $0<\lambda<1$ and $k\geq 1$. In other words, a domain whose boundary $\t \Ga = \pa \t \Om$ admits a $\Ccal^{k,\lambda}$-parametrization $\al \rightarrow \t z(\al)$ for $\al \in [-\pi, \pi]$ satisfying the arc-chord condition, i.e. 
$$
\Fcal(\t z) := \sup_{\al, \beta}\Fcal(\t z)(\al, \beta) < \infty
$$
cf. \eqref{eq:defFcal} for the definition of $\Fcal(\t z)(\al, \beta)$. In particular, following \cite{CCG} we define the norm 
$$
|||\t z||| \sim \|\t z\|_{\Ccal^{k, \lambda}(\t\Ga)} + \Fcal(\t z).
$$ 
Then, by the Riemann mapping theorem, there exists a conformal map $\t f: \Dbb \longrightarrow \t \Om$, which can be extended as a $\Ccal^{k,\lambda}$-homeomorphism to $\overline{\Dbb}$  by the Kellog-Warschawski theorem. Let $\t z_0\in \t \Om$ be such that $\dist(\t z_0, \t\Ga)>> |||\t z|||^{-1}$ and let $\t f(0) = \t z_0$. Then there exists a polynomial $P$ such that  
\be\label{est:fder}
e^{-P(|||\t z|||)} \, \leq \, \frac{|\t f'(\zeta)|}{|\t f' (0)|} \, \leq \, e^{P(|||\t z|||)} 
\ee
(cf. \cite{CCG}). In particular, since both $\pa\Dbb$ and $\t \Ga$ are chord-arc curves, the above implies   
\be\label{est:f}
e^{-P(|||\t z|||)} \, \leq \, \frac{|\t f(\zeta_1) - \t f(\zeta_2)|}{|\zeta_1 - \zeta_2|} \, \lesssim \, e^{P(|||\t z|||)} \qquad \forall \zeta_1, \zeta_2\in \pa\Dbb
\ee
for some possibly different polynomial $P$. Indeed for any $\t z_1, \t z_2\in \pa\t\Om$ we have the following estimate
$$
\frac{\big|\t f^{-1}(\t z_1) - \t f^{-1}(\t z_2)\big|}{|\t z_1-\t z_2|} = \frac{1}{|\t z_1-\t z_2|}\bigg|\int_{\al_2}^{\al_1} (\t f^{-1}(\t z(\al)))'d\al \bigg|  \,\lesssim \, \frac{|\t z'|_\infty}{|\t f'|_\infty}\frac{|\al_1 - \al_2|}{|\t z(\al_1)-\t z(\al_2)|}, 
$$
where $\t z(\al_i) = \t z_i$. 

\begin{proof} (of lemma \ref{lem:r})

First, let $\t \Om$ be the image of $\Om$ under the complex square root. More precisely, for some $0<x_0 \in \Om_1$ to be specified we define $\psi(z):= \sqrt {z - x_0}$ and $\t \Om := \psi(\Om)$ where the branch cut is chosen along the negative real axis. Recall that $\Om^c = \Om_1 \cup \Om_2$ with $\Om_1$ bounded. More precisely, when $z\in \Ga^+ \cup \Ga^- $ is considered with respect to the graph parametrization (cf. introduction to section \ref{S.estimates}), we have 
$$
\psi(z) = \pm i e^{i\varphi(x)/2}\sqrt {|z - x_0|}, \quad \varphi(x) = \mp \arctan \bigg(\frac{\ka(x)}{x-x_0}\bigg), \qquad z = x \pm i \ka(x)\in \Ga^\pm
$$
and, in particular, $\psi(z_*) = \pm i\sqrt{x_0}$. We set $\t z(\al) := \psi(z(\al))$ and claim $\t z$ is a regular parametrization of $\t \Ga$ which has the same regularity as $z(\al)$ and satisfies the arc-chord condition. Moreover, $|||\t z|||$ is controlled by $P(|||z |||)$ for some polynomial $P$.

By assumption, for each $z\in \Ga\setminus \{z_*\}$ there exists $r(z)>0$ such that $B_{r}(z)\setminus \Ga$ has exactly two connected components while $B_{r}(z)\cap \Ga$ is connected. These $r(z)$ can be chosen uniformly on the complement of any neighborhood of $z_*$. In particular, setting $I_\epsilon := B_\varepsilon(\al_*)\cup B_\varepsilon(-\al_*)$, we may choose $x_0>0$ in such a way that 
$$
\dist(x_0, \Ga_1\setminus (\Ga^+\cup \Ga^-))\gtrsim r, \qquad r = \min_{\al \in [-\pi, \pi]\setminus I_\epsilon} r(z(\al)) = O(1/|||z|||^{k})
$$  
but also that $x_0 \sim \min\{r, 1\}$. Then obviously
$$
|\psi(z)| \, \gtrsim \,  \sqrt{x_0}, \qquad z\in  \Ga\setminus (\Ga^+ \cup \Ga^-),
$$
while a short calculation gives
$$
|\psi(z)| \, \gtrsim \,  \begin{cases}  \sqrt{x_0} & \quad |x-x_0|\geq x_0/2 \\
              \frac{1}{\sqrt{|||z|||}} \, x_0^{(\mu + 1)/2} & \quad |x-x_0| <  x_0/2
\end{cases}
$$
when $z\in \Ga^\pm$. In particular, for each $k=0, 1...$ there exists some polynomial $P$ such that
$$
|d^k\psi/dz^k| \, \sim \, 1, \qquad z\in \Ga, \qquad k=0,1...
$$
with the equivalence constant $O(e^{P(|||z|||)})$ and therefore $\t z$ has the same regularity as $z$ does. As for the arc-chord condition, given $\al\neq \beta$ such that $(\al, \beta)\in (B_\varepsilon(\al_*)\times B_\varepsilon(-\al_*))^c$ we have by definition of $\psi$ that 
$$
\Fcal(\t z)(\al, \beta) = \frac{|e^{i\al} - e^{i\beta}|}{|z(\al) - z (\beta)|} \, |\psi(z(\al)) + \psi(z(\beta))| \, \lesssim \, \sup_\al |\psi(z(\al))| \, \Fcal_{\epsilon}(z)
$$

On the other hand, let $(\al, \beta) \in B_\varepsilon(\al_*)\times B_\varepsilon(-\al_*)$. For convenience, let us use the graph parametrization, i.e. in the notation of section \ref{S.estimates} we have $z(\al)= z_+$ and $z(\beta) = q_-$. In particular,
$$
\frac{1}{|\psi(z_+) - \psi(q_-)|} = \frac{|\psi(z_+) + \psi(q_-)|}{|z_+ - q_-|} \, \leq \,  P(|||z|||) \, \underbrace{\frac{|\psi(z_+) + \psi(z_-)|}{|z_+ - z_-|}}_{:= t(x)}  + \frac{|\psi(z_-) - \psi(q_-)|}{|x - u|}  
$$
with the second term bounded by some power of $|||z|||$. It remains to consider $t(x)$. However, when $|x - x_0| \geq x_0/2$, the estimate $|\sin (\varphi(x)/2)| \,\lesssim \, |\ka(x)|/|x-x_0|$ implies 
$$
\aligned
t(x)   \, \lesssim \, \frac{\sqrt{|x- x_0| + |\ka(x)|}}{|\ka(x)|}|\sin(\varphi(x)/2)| \,\lesssim \, \frac{\sqrt{1 + |\ka(x)|/|x - x_0|}}{\sqrt{|x - x_0|}} \,\lesssim \, |||z|||^{1/2} (1 + |||z|||),
\endaligned
$$
while $|x-x_0|\leq x_0/2$, we have 
$$
t(x) \, \lesssim \, \frac{\sqrt{1 + |x- x_0|/|\ka(x)|}}{\sqrt{|\ka(x)|}} \, \lesssim \, |||z|||^{1/2} \, x_0^{-(\mu + 1)/2}(1 + x_0^{-\mu} |||z|||).
$$
In particular, there exists a polynomial $P(|||z|||)$ which majorizes both of the above and the arc-chord condition for $\t z$ follows.

By the Riemann mapping theorem there exists a mapping $\t f:\Dbb \rightarrow \t\Om$ satisfying estimates \eqref{est:fder}-\eqref{est:f} and we may assume $\t f(\pm 1) = \pm i \sqrt{x_0}$. We define $f:\Dbb\rightarrow \Om$ and $F: \Pi\rightarrow \Om$ to be the conformal mappings 
$$
f:= \psi^{-1}\circ \t f, \qquad F:= f\circ \tanh(w/2).
$$
Note that $F$ maps $\pa\Pi^\pm$ to $\pa \Om_1$ respectively $\pa\Om_2$. Taking into account that $\psi^{-1}(\t z) = \t z^2 + x_0$, we have
$$
|f'(\zeta)| = 2|\t f(\zeta)||\t f'(\zeta)|, \qquad  \frac{|F'(w)|}{|1 - \tanh^2(w/2)|}= \frac{1}{2}\big|f'(\zeta)\big|  
$$
and therefore $|f'|$ and $|F'|$ satisfy estimates \eqref{est:rOm} and \eqref{est:stripOm} respectively (using $|||\t z||| \lesssim P(|||z|||)$ and  $|\t f(\zeta)| = |\psi(z)|$). 

It remains to show estimate \eqref{est:stripOm} for $F$. The upper bound is straightforward, once we have the estimate for $f'$. As for the lower bound, let $w\in \pa\Pi$ be such that $z=F(w)\in \Ga\setminus (\Ga^+\cup\Ga^-)$. Then $f$ is invertible on e.g. $f^{-1}(\Ga\setminus \Ga^-)$ and assuming $\Re(w)\geq 0$ we have the estimate
$$
\aligned
 \frac{\big|1 - \tanh^2 \frac{w}{2}\big|}{|F(w)|} = \frac{\big|1 - (f^{-1}(z))^2\big|}{|z|} \, \lesssim \, \frac{1}{|z(\al)|}\int_{\al_*}^\al \frac{|z'(\al')|}{\big|f'\circ f^{-1}( z(\al'))\big|}\, d\al' \, \lesssim \, \Fcal_{\varepsilon/2}(\Ga) \, e^{P(||||\t z|||)}
\endaligned
$$
and similarly if $\Re(w)<0$. If, however $z\in \Ga^\pm$ then use the bijection $f: f^{-1}(\Ga^\pm) \rightarrow \Ga^\pm$ and since $\Ga^\pm$ can be parametrized as a graph, we replace $\Fcal_{\varepsilon/2}$ with $(\sup |z_\al|)^{-1}$.

Let us now consider the 'cusp' domain $\Om^c$. W.l.o.g. we restrict attention to the bounded component $\Om_1$ of $\Om^c$. The unbounded component $\Om_2$ can be reduced to the bounded case via the transformation $1/(z - x_1)$ for some $x_1\in \Om_1$. 
Let $\al \rightarrow z(\al)$ with $\al \in [-\pi, \pi]$ be a regular parametrization of $\Ga$ with the singular point located at $\al = 0$. We first map $\Om_1$ to a curvilinear half-strip via the conformal map $z^{-\mu}$. Near the singular point, we have 
$$
z^{-\mu} = e^{\mp i \mu \varphi(x)} |z|^{-\mu}, \quad \varphi(x) = \arctan\Big(\frac{\ka(x)}{x}\Big), \quad z\in \Ga^\pm 
$$
and it is not difficult to see it approaches the lines $\tau \mp ik/(\mu + 1)$ asymptotically as $\tau\rightarrow \infty$, with $k$ as in \eqref{reg_implication}. Assume for simplicity that $(-az^{-\mu})(\Om_1)\subseteq\{(\tau, \nu):\, \, |\nu|<\pi\}$ where $a:= \pi(\mu + 1)/2k$. Then  
$$
\psi(z) = \exp(-az^{-\mu})
$$
maps the interior of $\Om_1$ to a bounded domain $\t \Om$ with $\Ccal^{1,\lambda}$-boundary $\t \Ga$. Note that $\t z (\al) := \psi(z(\al))$ in not a regular parametrization in the neighborhood of the singular point since $|\psi'(0)| = 0$. However, the tangent vector at $\psi(z_\pm(x))$ reads
$$
\frac{(\psi\circ z_\pm)'(x)}{|(\psi\circ z_\pm)'(x)|} = e^{\pm i \t \theta(x)}, \qquad \t\theta(x)= \theta(x) + (\mu + 1)\varphi(x) + a |z|^{-\mu}\sin (\mu \varphi(x))
$$
where $\frac{z_\pm'(x)}{|z'_\pm(x)|} = e^{\pm i \theta(x)}$. By construction, we have $\lim_{x\rightarrow 0}\t\theta(x) = \pi/2$ and $\t \theta\in H^1$ with the norm controlled by some polynomial of $|||z|||$. Note that the derivative of the last term of $\t\theta(x)$ is to the first order controlled by $(x^{-(\mu + 1)}\ka(x))'$ which belongs to $L^{2}$. 
Setting
$$
\t z_\pm (x) := \int^x_0 e^{i\t \theta(s)}ds
$$
we obtain an arc-length parametrization on the neighborhood of $0$. It remains to verify the arc-chord condition. However, by making $\de$ smaller we can ensure $|\t \theta(x) - \frac{\pi}{2}| \lesssim P(|||z|||)x^\mu \leq \frac{\pi}{4}$ hence
$$
|\t z(x) - \t z(-u)| \geq \bigg|\int_0^x \sin(\t\theta(s))ds + \int_0^u \sin(\t\theta(s))ds\bigg| \gtrsim |x + u|
$$
We clearly have $|d^k\psi(z)/dz^k|\lesssim 1$ for all $z\in\Ga$ and $k=0, 1..$. However, we only have lower bounds outside of some neighborhood of the origin which moreover, depend exponentially on the size of the neighborhood. In particular, it is not difficult to verify the remaining part of the arc-chord condition, however, piecing everything together we only have $|||\t z|||\lesssim e^{P(|||z|||)}$.

Let us now define the corresponding mappings $f(\zeta) := \psi^{-1} \circ \t f(\zeta)$ and $F(w):= f\circ \tanh(w/2)$ where $\t f:\Dbb\rightarrow \t \Om$ is given by the Riemann mapping theorem and satisfies estimates \eqref{est:fder} and \eqref{est:f}. We assume in addition $\t f(1) = \psi(z_*) = 0$. In the interior of $\Dbb$, we have by definition
\be\label{eq:derfcusp}
\aligned
a f(\zeta)^{-\mu} = -\log \t f(\zeta), \qquad f'(\zeta) = \frac{1}{\mu a }\frac{\t f'(\zeta)}{\t f(\zeta)} f(\zeta)^{\mu + 1} 
\endaligned
\ee
and $f'$ can be extended to the boundary of $\Dbb$ in any Stolz angle with vertex on $\pa\Dbb\setminus \{1\}$. In particular, estimates \eqref{est:fder}-\eqref{est:f} imply estimate \eqref{est:ffder}, i.e. we have
$$
|(1 - \zeta) f'(\zeta)|\,\sim \,\frac{1}{a} |f(\zeta)|^{\mu + 1}. 
$$
Let $\t \de := e^{-2(P(|||\t z|||) + \pi/2)}$ with $P$ as in estimate \eqref{est:f} and let $|\zeta - 1| < \t \de$. 
Estimate \eqref{est:f} can be rewritten as 
\be\label{eq:estfaux}
\big|\log |\t f (\zeta)| - \log |\zeta - 1|\big|\, \leq\, P(|||\t z|||) 
\ee
hence, by the choice of neighborhood, we have $\log |\t f(\zeta)| < 0$ and  
$$
\frac{1}{2}|\log|\zeta - 1|| \, \leq \, |\log |\t f(\zeta)||\,\leq\, |\log \t f(\zeta)| \,\leq\, |\log|\t f(\zeta)|| + \pi/2 \, \leq \, 2|\log|\zeta - 1||.
$$
In particular \eqref{est:fOmc} follows, i.e. we have
$$
a^{-1/\mu}|f(\zeta)|\sim |\log|\zeta - 1||^{-1/\mu} \qquad \forall \zeta \in \overline{\Dbb} : |\zeta - 1|<\t \de.
$$ 
By going over to the strip $\Pi$ we recover power growth at infinity. Indeed let $\zeta = \tanh(w/2)$ with $\Re w \geq \log 2$. Then
$$
|\zeta - 1| = \frac{2 e^{-\Re w}}{|1 + e^{-w}|}, \qquad \big|\big|1 + e^{-w}\big| - 1\big| \leq e^{-\Re w} \leq 1/2
$$
and therefore
$$
e^{-\Re w} < \t \de/4 \quad \Rightarrow \quad |\zeta - 1| < \t \de. 
$$
In particular, estimate \eqref{est:fOmc} implies
$$
|F(w)| \sim \big|\log \big|1 - \tanh(w/2)\big|\big|^{-1/\mu} \, \sim \, |w|^{-1/\mu}, \qquad \forall w : \Re w \geq |\log \t \de/4|.
$$
Finally, equation \eqref{eq:derfcusp} implies  
$$
|F(w)|^{1 + \mu} \sim |F'(w)| \quad \Rightarrow \quad |F'(w)| \sim |w|^{-1-1/\mu}.
$$
  
Finally, we show that $|f|$ is bounded away from zero iff $|1 - \zeta|$ is. In fact, combining the first equation in \eqref{eq:derfcusp} with \eqref{eq:estfaux}, we have   
$$
|f(\zeta)|\, \gtrsim\, \bigg(\frac{a}{ |\log |\t f(\zeta)|| + \pi/2}\bigg)^{1/\mu} \, \gtrsim\, \bigg(\frac{a}{ |\log |1 - \zeta|| + P(|||\t z|||) + \pi/2}\bigg)^{1/\mu}.
$$
and conversely 
$$
|\log|1 - \zeta|| \, \lesssim \, a |f(\zeta)|^{-\mu} + P(|||\t z|||).
$$
As for the upper bound, note that we have $|f(\zeta)|\leq M:= \max\{1,\, \sup_\al |z(\al)|\}$, since $\Om_1$ is bounded. In particular, the corresponding estimates for $F$ and $F'$ follow. 
\end{proof}

\end{section}
\section*{Acknowledgements}

A.E.\ and N.G.\ are supported by the ERC Starting Grant~633152. This
work is supported in part by the Spanish Ministry of Economy under the
ICMAT--Severo Ochoa grant SEV-2015-0554 and the MTM2017-89976-P. 
788250. D.C.\ was partially supported by the ERC Advanced Grant~788250.

\appendix

\section{Appendix}\label{S.appendix}

\begin{subsection}{The variable change $\tau\rightarrow h(\tau)$}
Let $\rho(\nu) := \ka_+(\nu) - \ka_-(\nu)$, where, by making $\de$ smaller if necessary, we assume $\rho$ is strictly monotonically increasing on $[0,\de]$. We implicitly define 
$$
h^{-1}(u) := \int^\de_{u} \frac{d\nu}{\rho(\nu)}, \qquad u\in(0,\de)
$$
(cf. \cite{MazSo}). Then $h^{-1}:(0,\de) \rightarrow (0, \infty)$ is strictly monotonically decreasing and it is three times continuously differentiable ($\rho$ is at least two times continuously differentiable on $(0,\de)$). In particular, the inverse $h$ exists and satisfies 
\be\label{eq:hder}
h'(\tau) = -\rho(h(\tau)).
\ee
Regularity assumptions \eqref{reg_assum} imply (after possibly making $\de$ smaller) that 
$$
\frac{1}{\rho(\nu)} \sim \frac{1}{k\nu^{\mu + 1}}
$$
and therefore   
$$
h^{-1}(u) \sim \frac{\mu}{k}\big(u^{-\mu} - \de^{-\mu} \big).
$$
In particular, 
$$
h(\tau) \sim (1 + \tau)^{-\frac{1}{\mu}}
$$
with constants depending only on $k,\de$ and $\mu$. By taking derivatives of formula \eqref{eq:hder}, we see the asymptotic formula can be differentiated three times, i.e. 
$$
|h^{(k)}(\tau)|\sim (1 + \tau)^{-k - \frac{1}{\mu}}, \qquad k=1,2,3.
$$

\end{subsection}

\subsection{Hardy operator and compactness}

We will need the following lemma on continuity and compactness of Hardy operator in weighted Lebesgue spaces (cf. \cite{KufOpic}):
\begin{lemma}[Hardy's inequality]\label{lem:hardy}
Let $p>1$ and let $R < \infty$. 
\begin{enumerate}
 \item\label{itm:hardy1} Let $\beta + p^{-1} < 1$, then  
$$
f\longrightarrow \int_0^x f(t)dt \,:\, \Lcal_{p,\beta}([0,R]) \longrightarrow \Lcal_{p,\ga}([0,R]) 
$$
is continuous for all $\ga \geq \beta -1$ and compact for all $\ga > \beta -1$. 
\item\label{itm:hardy2} Let $\lambda + p^{-1} >0$, then 
$$
f\longrightarrow \int_x^R f(t)dt \,:\, \Lcal_{p,\ga}([0,R]) \longrightarrow \Lcal_{p,\beta}([0,R])
$$
is continuous for all $\ga \leq \beta + 1$ and compact for all $\ga < \beta + 1$.

\end{enumerate}
\end{lemma}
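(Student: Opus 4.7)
My plan is to derive both parts from the classical Muckenhaupt characterization of weighted Hardy inequalities on intervals, combined with a standard truncation argument for the compactness claim.

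For part \ref{itm:hardy1}, the continuity of $Tf(x):=\int_0^x f(t)\,dt$ from $\Lcal_{p,\beta}([0,R])$ to $\Lcal_{p,\gamma}([0,R])$ is equivalent to the weighted inequality
\[
\left(\int_0^R x^{\gamma p}\Big|\int_0^x f(t)\,dt\Big|^p\,dx\right)^{1/p} \leq C\left(\int_0^R x^{\beta p}|f(x)|^p\,dx\right)^{1/p}.
\]
By Muckenhaupt's theorem, this holds iff
\[
A \, := \, \sup_{0<r<R}\Big(\int_r^R x^{\gamma p}\,dx\Big)^{1/p}\Big(\int_0^r x^{-\beta p'}\,dx\Big)^{1/p'}<\infty,
\]
where $p'$ is the conjugate exponent. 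The inner integral on the right converges precisely when $\beta p' < 1$, i.e.\ $\beta + p^{-1}<1$, and in that case equals a constant times $r^{1/p'-\beta}$. For the first factor one distinguishes whether $\gamma p +1$ is positive, zero, or negative: only the negative case contributes a nontrivial power $r^{\gamma+1/p}$ at the origin, and combining the two factors gives an overall power $r^{\gamma+1-\beta}$. A short bookkeeping of the three regimes then shows $A<\infty$ exactly when $\gamma\geq \beta-1$, which is the stated continuity range.

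The compactness claim for $\gamma > \beta-1$ follows by approximating $T$ by finite-rank, and hence compact, truncations. For $0<\delta<R$ split $T=T_\delta^{\textup{in}}+T_\delta^{\textup{out}}$ by inserting $\chi_{[0,\delta]}+\chi_{[\delta,R]}=1$ in front of $f$. The ``out'' piece factors through $\Lcal_{p,\beta}([\delta,R])\cong L^p([\delta,R])$, on which $T$ lands in $W^{1,p}([\delta,R])\hookrightarrow \Lcal_{p,\gamma}([0,R])$ compactly by Rellich--Kondrachov. The ``in'' piece has norm controlled, via the continuity statement applied on $[0,\delta]$, by $\delta^{\gamma-\beta+1}\to 0$ since $\gamma-\beta+1>0$. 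Thus $T$ is the norm-limit of compact operators, hence compact.

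Part \ref{itm:hardy2} is the dual statement for $\widetilde{T}f(x):=\int_x^R f(t)\,dt$. The cleanest route is to invoke Muckenhaupt's companion criterion for the conjugate Hardy operator, which now reads
\[
\widetilde{A} \, := \, \sup_{0<r<R}\Big(\int_0^r x^{\beta p}\,dx\Big)^{1/p}\Big(\int_r^R x^{-\gamma p'}\,dx\Big)^{1/p'}<\infty;
\]
alternatively, the change of variables $x\mapsto R-x$ together with the equivalence $x\sim 1$ near the fixed endpoint reduces part \ref{itm:hardy2} to an instance of part \ref{itm:hardy1}. The convergence conditions mirror those above: $\gamma+p^{-1}>0$ (the stated hypothesis, with $\lambda$ a typo for $\gamma$) ensures integrability of the right factor, and the resulting bound on the product forces $\gamma\leq\beta+1$ for continuity and the strict inequality for compactness, with the latter again handled by the same truncation scheme. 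The only place that requires care is the sharp threshold $\gamma=\beta-1$ (resp.\ $\gamma=\beta+1$), where continuity holds but the $\delta$-limit in the compactness argument no longer produces a vanishing remainder; this is consistent with the non-compactness of the Hardy operator at the critical endpoint.
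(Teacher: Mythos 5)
The paper does not actually prove this lemma --- it simply cites the Kufner--Opic monograph on Hardy-type inequalities. So there is no in-paper proof to compare against; what follows is an assessment of your argument on its own terms.

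Your overall strategy (Muckenhaupt's two-weight criterion for the Hardy operator, together with a truncation argument for compactness) is the standard route, and your treatment of part~\ref{itm:hardy1} is essentially sound. One minor imprecision: the rate $\delta^{\gamma-\beta+1}$ you claim for $\|T_\delta^{\textup{in}}\|$ is only sharp when $\gamma + p^{-1} < 0$; in the complementary regime the correct rate is $\delta^{1 - p^{-1} - \beta}$, but this is still a positive power of $\delta$ under $\beta + p^{-1} < 1$, so the norm of the remainder tends to zero and compactness still follows.

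There is, however, a genuine error in your handling of part~\ref{itm:hardy2}. You declare $\lambda$ to be a typo for $\gamma$ and then assert that $\gamma + p^{-1} > 0$ ``ensures integrability of the right factor'' $\int_r^R x^{-\gamma p'}\,dx$. Neither claim is correct. That integral is finite for every $r>0$ regardless of $\gamma$ (the issue is only its growth as $r\to 0$, which has nothing to do with the sign of $\gamma+p^{-1}$). The genuine requirement in your own displayed criterion $\widetilde{A}$ lives in the \emph{left} factor: $\int_0^r x^{\beta p}\,dx < \infty$ forces $\beta p > -1$, that is $\beta + p^{-1} > 0$. So $\lambda$ must be read as $\beta$, not $\gamma$; this is also what the paper itself does in both places it applies part~\ref{itm:hardy2} (setting ``$\lambda := \beta$''). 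Note that with your reading the lemma would actually be false: $\gamma + p^{-1} > 0$ combined with $\gamma \leq \beta + 1$ gives only $\beta + p^{-1} > -1$, which does not make the left Muckenhaupt factor finite (take, e.g., $\beta = -1$, $\gamma = 0$, $p = 2$). Once the hypothesis is corrected to $\beta + p^{-1} > 0$, the rest of your bookkeeping with powers of $r$, and the same truncation scheme for compactness, does go through and yields the stated ranges $\gamma \leq \beta + 1$ (continuity) and $\gamma < \beta + 1$ (compactness).
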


In order to show compactness for certain singular integral operators, we will also need the following result:

\begin{lemma}
\label{lem:compact}
Let $a:[0,\de]\rightarrow \Rbb$ be continuous with $a(0) = 0$ and let 
$$
A\om(x):=\int_0^\de\om(u)\frac{1}{(x-u) + i\rho(u)}\,du; \qquad A_*\om(x):=\int_0^\de\om(u)\frac{1}{(x-u) + i\rho(x)}\,du. 
$$
Then $A$ and $A_*$ are continuous as operators $\Lcal_{p,\beta}\longrightarrow\Lcal_{p,\beta}$. Moreover $aA,\,Aa,\,aA_*,\,A_*a$ and their complex conjugates are compact.
\end{lemma}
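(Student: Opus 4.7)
The plan is to handle the continuity of $A$ and $A_*$ first, then deduce the compactness of $aA$, $Aa$, $aA_*$, $A_*a$ (and their complex conjugates) from continuity by a cutoff approximation exploiting $a(0)=0$.

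For continuity of $A$, I would split the integral into a local part over $|u-x|\leq x/2$ and a non-local complement. On the non-local part the kernel is dominated pointwise by $1/|x-u|$ — by $1/x$ when $u<x/2$ and by $1/u$ when $u>3x/2$ — and two applications of Hardy's inequality (Lemma~\ref{lem:hardy}) provide the required $\Lcal_{p,\beta}$-control. For the local part, I would apply the change of variables $u=h(\tau)$, $x=h(\xi)$ from the Appendix; the integration then takes place over a dyadic neighborhood $I_{\t\epsilon}(\xi)\subseteq\Rbb_+$ and the kernel transforms into
$$
k(\xi,\tau)=\frac{-h'(\tau)}{(h(\xi)-h(\tau))-ih'(\tau)},
$$
which is precisely the kernel already analyzed in Proposition~\ref{prop:BR}. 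Setting $F(\xi,\tau)=(h(\xi)-h(\tau))/(\xi-\tau)$ and using the identity $\tfrac{1}{\xi-\tau}(h'(\tau)/F-1)=-F_\tau/F$ with $|F_\tau/F|\sim 1/\tau$, one writes $k(\xi,\tau)=\tfrac{1}{(\xi-\tau)-i}+R(\xi,\tau)$ with $|R|\lesssim 1/\tau$. The Cauchy piece is a bounded Fourier multiplier on the Muckenhaupt-weighted $L^p$ space after the $(\xi/\tau)^M$-shift (Lemma~\ref{lem:aux1} with $\gamma=M-\alpha$, where $\alpha=\mu^{-1}(\beta+p^{-1})+p^{-1}$), while the remainder $R$ is again absorbed by Hardy. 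The operator $A_*$ is handled identically, with $\rho(u)=-h'(\tau)$ replaced by $\rho(x)=-h'(\xi)$ in the imaginary part of the denominator.

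For compactness, I would pick continuous functions $a_n$ on $[0,\de]$ with $\mathrm{supp}\,a_n\subseteq[1/n,\de]$ and $\|a-a_n\|_{L^\infty}\to 0$, which is possible since $a$ is continuous with $a(0)=0$. Multiplication by an $L^\infty$ function is bounded on $\Lcal_{p,\beta}$ with operator norm at most $\|\cdot\|_\infty$, so the continuity of $A$ and $A_*$ yields
$$
\|(a-a_n)A\|_{op}+\|A(a-a_n)\|_{op}+\|(a-a_n)A_*\|_{op}+\|A_*(a-a_n)\|_{op}\longrightarrow 0,
$$
which reduces compactness of the four products to compactness of the truncated operators $a_nA$, $Aa_n$, $a_nA_*$, $A_*a_n$ for each fixed $n$. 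In each case, direct inspection shows the effective kernel is globally bounded on $[0,\de]^2$: the diagonal singularity $u=x$ is regularized either by $\rho(u)\geq\rho(1/n)>0$ or by $|x-u|\geq 1/(2n)>0$, depending on where the $a_n$-factor sits. Hence the truncated operators are Hilbert--Schmidt on $L^2([0,\de])$, and the boundedness of the power weight on the support of $a_n$ upgrades this to compactness on $\Lcal_{p,\beta}$ by standard finite-rank approximation of the continuous kernel. Taking the norm limit $n\to\infty$ gives compactness of $aA$, $Aa$, $aA_*$, $A_*a$; the complex conjugates are treated in exactly the same way.

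The main obstacle I foresee is the bookkeeping for the Cauchy-part estimate on the local piece, which requires carefully transplanting the $(\xi/\tau)^M$-shift trick from Proposition~\ref{prop:BR} into this slightly different setting; the non-local piece is a direct Hardy argument, and the approximation argument for compactness, though it relies on the interplay between Hilbert--Schmidt on $L^2$ and compactness on the weighted space, is essentially routine.
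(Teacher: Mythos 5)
Your proposal is correct and follows essentially the same route as the paper: continuity of $A$ (and $A_*$) is reduced to the kernel analysis already carried out in Proposition~\ref{prop:BR} via the substitution $u=h(\tau)$, and compactness of $aA$, $Aa$, $aA_*$, $A_*a$ is obtained by truncating $a$ near the origin (the paper uses smooth cutoffs $\chi_k$ rather than your $a_n$, an inessential difference), observing that the truncated kernels are bounded and continuous and hence give compact operators, and passing to the norm limit using $a(0)=0$. The only cosmetic slip is the sign in your local kernel $k(\xi,\tau)$, which should read $h'(\tau)/\bigl((h(\xi)-h(\tau))-ih'(\tau)\bigr)$; this has no bearing on the estimates.
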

\begin{proof}
We only need to show compactness, continuity follows as in Proposition \ref{prop:BR}. For $k\in\Nbb$ let $\chi_k:[0,\de]\rightarrow [0,1]$ be a smooth cut-off function such that $\chi_k(u) = 1$ if $u<\frac{\de}{k}$ and $\chi_k(u) = 0$ if $u\geq \frac{2\de}{k}$. We set
 $$
 a_k:=(1-\chi_k)a;\quad B:=Aa.
 $$
 The operators $B_k\om(x) :=A(a_k\si)(x)$ are compact since $\frac{a_k(u)}{(x-u) + i\rho(u)}$ are bounded kernels ($a_k$ are identically vanishing near $0$). It is not difficult to see that  
 $$
 \sup_{\|\si\| = 1}\|(B_k-B)\si\|_{p,\beta}\longrightarrow 0
 $$
 and therefore $B$ is compact as the limit of a sequence of compact operators. Others follow similarly.
\end{proof}

\subsection{Fourier multipliers on weighted Sobolev spaces}

We now state few important lemmas related to the Fourier multiplier
theorems on certain weighted Lebesgue spaces. The first, proof of
which can be found in \cite{MazSo}, gives continuity properties of an
integral operator with prescribed decay at infinity. More precisely,
let us define
$$
\phi\in \Lcal_{p, \ga}(\Rbb) \, \Leftrightarrow \,
(1+\tau^2)^{\ga/2}\phi \in L^p(\Rbb)\,;
$$
then one has:

\begin{lemma} 
\label{lem:aux1}
Let $T$ be an integral operator on $\Rbb$ with kernel $K(x,y)$, satisfying, for some $J\geq 0$ the estimate
$$
|K(x,y)| \lesssim \frac{1}{|x-y|}\frac{1}{(1 + |x-y|^J)}.
$$ 
If $0<\ga + p^{-1}<1 + J$ and $T:L^p(\Rbb)\longrightarrow L^p(\Rbb)$ is continuous, then 
$$
T: \Lcal_{p, \ga}(\Rbb) \longrightarrow \Lcal_{p, \ga}(\Rbb)
$$
is continuous. 
\end{lemma}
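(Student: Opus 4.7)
The approach is to reduce the weighted inequality to an ordinary $L^p$-inequality for a conjugated operator, and then control this operator by separating the near-diagonal and far-diagonal contributions. Setting $w(x)=(1+x^2)^{\gamma/2}$, multiplication by $w$ is an isometric isomorphism $\Lcal_{p,\gamma}(\Rbb)\to L^p(\Rbb)$, so the claim is equivalent to $L^p$-boundedness of the operator $\tilde T$ with kernel $\tilde K(x,y)=w(x)w(y)^{-1}K(x,y)$. I would then decompose $\tilde T=\tilde T_{\mathrm{near}}+\tilde T_{\mathrm{far}}$ according to whether $|x-y|\leq 1$ or $|x-y|>1$.

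On the near-diagonal strip, I would write $\tilde K=K+K(w(x)/w(y)-1)$. A mean-value computation gives $|w(x)/w(y)-1|\lesssim |x-y|/(1+|x|)$ when $|x-y|\leq 1$, and combined with $|K|\lesssim |x-y|^{-1}$ this makes the commutator kernel pointwise bounded by $(1+|x|)^{-1}\chi_{\{|x-y|\leq 1\}}$, whose two Schur integrals are uniformly bounded; this part therefore defines an $L^p$-bounded operator. The remaining piece $K\chi_{\{|x-y|\leq 1\}}$ equals $T-K\chi_{\{|x-y|>1\}}$, where the first term is $L^p$-bounded by hypothesis and the second has kernel $\lesssim |x-y|^{-1-J}\chi_{\{|x-y|>1\}}$, integrable in either variable when $J>0$. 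The borderline $J=0$ is the classical Muckenhoupt $A_p$-theorem, valid exactly in the range $0<\gamma+1/p<1$.

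For the far-diagonal piece I would apply Schur's test to $\tilde T_{\mathrm{far}}$ with test function $h(x)=(1+|x|)^a$, using the bound $|\tilde K|\lesssim (1+|x|)^\gamma(1+|y|)^{-\gamma}|x-y|^{-1-J}$. Splitting the integration into $|y|\lesssim|x|$ and $|y|\gtrsim|x|$, each Schur integral reduces to a pair of one-dimensional power integrals of Hardy type. The main obstacle is the bookkeeping step: one must verify that a single value of $a$ exists for which all four resulting integrals converge, which is exactly where the hypothesis range $0<\gamma+1/p<1+J$ is used in a sharp way—the lower endpoint matches the condition for the Hardy-type integrals at the origin, and the upper endpoint is precisely the condition under which the $|x-y|^{-1-J}$ decay outweighs the growth of $(1+|x|)^\gamma$ at infinity. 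A cleaner variant is to bypass Schur's test altogether and invoke Lemma \ref{lem:hardy} on the two halves of the inner integral, since its two admissibility conditions match the two endpoints of the hypothesis directly.
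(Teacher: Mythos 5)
The paper does not prove this lemma; it cites Maz'ya--Soloviev \cite{MazSo} and moves on, so I can only judge your argument on its own merits. The conjugation step and the commutator idea are the right starting point, and for $J>0$ your decomposition and Schur calculation can indeed be made to close (the nonempty-intersection bookkeeping works out to $-J<\ga+1/p<1+J$, as you claim, once one notes that the Schur exponent $b=ap'-\ga$ only needs $-1-J<b<J$, not $-1<b<J$). The genuine problem is the case $J=0$, which the lemma explicitly allows and which the paper actually uses: in the proof of Proposition~\ref{prop:BR} the kernel $\frac{1}{(\xi-\tau)-i}$ satisfies the size bound only with $J=0$, and the lemma is invoked with the range $0<(M-\al)+p^{-1}<1$, i.e.\ $J=0$.

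For $J=0$ your argument breaks at two places, and the patch you offer does not hold. First, the identity $K\chi_{\{|x-y|\le 1\}}=T-K\chi_{\{|x-y|>1\}}$ no longer yields $L^p$-boundedness of the near piece, because $K\chi_{\{|x-y|>1\}}\lesssim|x-y|^{-1}\chi_{\{|x-y|>1\}}$ is not an $L^1$ kernel and is not covered by Schur; only the \emph{sum} $T$ is assumed bounded, not either truncation separately. Second, the Schur test on $\tilde T_{\rm far}$ produces a logarithm in the comparable region $|x|\sim|y|$, $1<|x-y|\lesssim|x|$ (the middle Schur integral is $\int_1^{C|x|}t^{-1}\,dt\sim\log|x|$), so the required bound $J(x)\lesssim(1+|x|)^b$ fails. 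Appealing to ``the classical Muckenhoupt $A_p$-theorem'' does not rescue this: the $A_p$ theorem for singular integrals requires Calderón--Zygmund kernel \emph{regularity} (a Hörmander-type condition), while the hypothesis of the lemma gives only a pointwise size bound $|K(x,y)|\lesssim|x-y|^{-1}$ together with $L^p$-boundedness. An operator with these hypotheses need not be Calderón--Zygmund, so the $A_p$ extrapolation is not available as a black box.

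The robust fix is to cut not at $|x-y|=1$ but at $|x-y|\sim 1+\min(|x|,|y|)$, and to split the \emph{commutator} $\tilde T-T$ rather than $\tilde T$ itself (so that the hypothesis $T\colon L^p\to L^p$ can be used without having to bound a truncation of $K$). On the near region the commutator kernel is $\lesssim(1+|x|)^{-1}(1+|x-y|^J)^{-1}$ with uniformly bounded Schur integrals for every $J\ge 0$; on the far region one has $|x-y|\sim 1+\max(|x|,|y|)$, and the commutator kernel becomes a weighted Hardy (or dual Hardy) kernel whose admissibility conditions are exactly $\ga+1/p<1+J$ and $\ga+1/p>-J$, valid including $J=0$. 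Your final sentence — that one should invoke a Hardy-type inequality — is precisely the right idea, but as written it is attached to the wrong decomposition; with the geometric cutoff at $|x-y|\sim 1+|x|$ it closes the argument for all $J\ge 0$, whereas with the cutoff at $|x-y|=1$ it does not.
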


In the construction of harmonic functions on $\Om$ we work with weighted Lebesgue spaces with exponential weights. We make use of the following Fourier multiplier result (cf. \cite{Schott}): 
\begin{theorem}
Let $1<p<\infty$ and $u(x)=\exp(\pm d|x|)$ with $d>0$. Assume there exists a constant $c>0$ such that
\begin{equation}
\label{eq:multiplier}
\sup_{\xi\in\Rbb} \langle\xi \rangle^{|\beta + \ga|} |D^{\beta + \ga}_\xi a(\xi)| \leq c \frac{\beta^\beta}{(ed)^{|\beta|}}; \qquad \forall\xi \in \Rbb^n,\, \beta\in\Nbb^n, \, \ga\leq 2n +2
\end{equation}
where $\langle\xi\rangle:=\sqrt{1 + \xi^2}$. Then, linear operator $T_a$, defined via 
$$
\widehat{T_a \phi}(\xi):= a(\xi)\h\phi(\xi) 
$$
on $L^2\cap L^p$ is bounded as an operator $\Lcal^p(u)\rightarrow \Lcal^p(u)$.
\end{theorem}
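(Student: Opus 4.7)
The plan is to reduce the weighted bound to the standard H\"ormander--Mikhlin multiplier theorem by exploiting an analytic continuation of $a$ in the Fourier variable. First I would observe that the hypothesis \eqref{eq:multiplier}, via Stirling's inequality $\beta^\beta\leq e^{|\beta|}\beta!$, is essentially the Cauchy-type bound $|D^{\beta+\gamma}a(\xi)|\lesssim\beta!\,d^{-|\beta|}\langle\xi\rangle^{-|\beta+\gamma|}$. Taylor-expanding around each real $\xi$, this forces $a$ to extend holomorphically to the tube $\{\eta\in\Cbb^n:|\Imag\eta|<d\}$, and for every $\sigma\in\Rbb^n$ with $|\sigma|<d$ the translated symbol $a_\sigma(\xi):=a(\xi+i\sigma)$ satisfies the standard Mikhlin condition $\langle\xi\rangle^{|\gamma|}|D_\xi^\gamma a_\sigma(\xi)|\leq C_\sigma$ for $|\gamma|\leq 2n+2$. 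Hence $T_{a_\sigma}$ is bounded on $L^p(\Rbb^n)$ by the classical multiplier theorem, with norm depending only on $p$, $n$, $c$, $d$ and $|\sigma|$.

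Next I would reduce the weighted estimate to this shifted unweighted setting. Since $e^{d|x|}\sim e^{dx}+e^{-dx}$, the $\Lcal^p(u)$-bound for $T_a$ follows once boundedness of $T_a$ on $L^p(e^{\varepsilon dx}\,dx)$ is established for both $\varepsilon=\pm 1$. Writing $\psi:=e^{\varepsilon dx/p}\phi$ identifies $L^p(e^{\varepsilon dx})$ isometrically with $L^p(\Rbb^n)$, so this amounts to showing that the conjugated operator $M_{e^{\varepsilon dx/p}}\circ T_a\circ M_{e^{-\varepsilon dx/p}}$ is bounded on $L^p$. The central identity is
\begin{equation*}
e^{\varepsilon dx/p}\,T_a\bigl(e^{-\varepsilon dx/p}\psi\bigr)(x)=T_{a_{\varepsilon d/p}}\psi(x),
\end{equation*}
which I would establish for $\psi\in\Dcal(\Rbb^n)$ by a contour shift: writing $T_a(e^{-\varepsilon dx/p}\psi)$ as the inverse Fourier integral $(2\pi)^{-n}\int e^{ix\xi}a(\xi)\hat\psi(\xi-i\varepsilon d/p)\,d\xi$, substituting $\eta=\xi-i\varepsilon d/p$, and translating the contour back from $\Imag\eta=-\varepsilon d/p$ to the real axis. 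Combined with the Mikhlin bound for $T_{a_{\varepsilon d/p}}$ and density of $\Dcal(\Rbb^n)$ in $\Lcal^p(u)$, this completes the proof.

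The main obstacle is the contour-shift identity itself. Since $a$ is only bounded, not decaying, in $\xi$, justifying the deformation requires controlling both the absolute integrability on the intermediate horizontal slice and the vanishing of the ``vertical ends'' as $\Real\eta\to\pm\infty$. For $\psi\in\Dcal(\Rbb^n)$ the Paley--Wiener theorem gives super-polynomial decay of $\hat\psi$ uniformly on every horizontal line in the tube, which combined with the polynomial growth of $a$ inherited from \eqref{eq:multiplier} secures both required bounds. Once this identification is in place, the remainder of the argument (density, the norm comparison $e^{d|x|}\sim e^{dx}+e^{-dx}$, and the invocation of Mikhlin) is routine.
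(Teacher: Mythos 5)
The paper does not prove this theorem: it is quoted (with \cite{Schott} cited as the source) and then only Lemma~\ref{lem:aux2} verifies its hypotheses for the two concrete symbols needed. There is therefore no internal proof to compare your attempt against, and it must be assessed on its own merits.

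Your approach is the natural one and the core is sound: reading the hypothesis~\eqref{eq:multiplier} via Stirling's bound $\beta^\beta\leq e^{|\beta|}\beta!$ as a Cauchy-type estimate $|D^{\beta+\gamma}a(\xi)|\lesssim \beta!\,d^{-|\beta|}\langle\xi\rangle^{-|\beta+\gamma|}$, Taylor-expanding to obtain a holomorphic extension of $a$ to the tube $|\Imag\eta|<d$, verifying that every horizontal shift $a(\cdot+i\sigma)$ with $|\sigma|<d$ satisfies a Mikhlin condition of order $2n+2$, and converting the exponential weight into a horizontal contour shift by conjugating $T_a$ with $e^{\sigma x}$. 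The contour-shift identity is indeed the only analytic subtlety and your justification (Paley--Wiener decay of $\hat\psi$ on horizontal slices, together with boundedness of the shifted symbol, controls the vertical ends) is correct.

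However, there is a genuine gap in the reduction step, precisely where you invoke $e^{d|x|}\sim e^{dx}+e^{-dx}$. That equivalence only settles the \emph{growing} weight $u=e^{d|x|}$: if $T_a$ is bounded on $L^p(e^{dx})$ and on $L^p(e^{-dx})$ then it is bounded on $L^p$ of their sum, which is comparable to $e^{d|x|}$. But the statement also asserts boundedness on $\Lcal^p(u)$ with $u=e^{-d|x|}$, and this weight is the \emph{minimum} of $e^{\pm dx}$, not their sum; boundedness on the two one-sided exponential spaces does not pass to the ``min''. The fix is short but must be said: either invoke duality, $(\Lcal^p(e^{-d|x|}))^*\cong\Lcal^{p'}(e^{d|x|})$, noting that $T_a^*=T_{\bar a}$ and that $\bar a$ satisfies the same hypothesis, so the decaying-weight case at exponent $p$ follows from the growing-weight case at exponent $p'$; or split $\psi$ by the sign of $x$, use the pointwise bound $e^{-d|x|/p}\leq e^{\mp dx/p}$ on each half-line, and apply the one-sided conjugation identity to each piece separately. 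As written, the proposal proves only half of the claimed family of weights.

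Two smaller remarks. The reduction $e^{d|x|}\sim e^{dx}+e^{-dx}$ is specific to $n=1$ (which is all the paper uses), so strictly speaking the argument as phrased does not deliver the $\Rbb^n$ statement; in higher dimensions one would need a decomposition into finitely many cones with a linear exponential on each. And the phrase ``polynomial growth of $a$'' is imprecise --- the hypothesis with $\beta=\gamma=0$ already makes $a$ bounded on $\Rbb$, and the Mikhlin estimate makes the shifted symbol bounded on each horizontal line, which is all the contour deformation actually needs.
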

We need to verify that \eqref{eq:multiplier} is satisfied for:
\begin{lemma}
\label{lem:aux2} 
Let $d<1$ and $n=1$. Then
\begin{enumerate}
 \item $a_1(\xi):= \tanh(\frac{\pi\xi}{2})$
 \item $a_2(\xi):= \coth(\frac{\pi\xi}{2}) - \frac{2}{\pi\xi}$
\end{enumerate}
satisfy \eqref{eq:multiplier}.
\end{lemma}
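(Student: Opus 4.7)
The plan is to exploit the fact that $a_1$ and $a_2$ extend meromorphically to all of $\Cbb$, with all poles lying on the imaginary axis: $a_1(\xi) = \tanh(\pi\xi/2)$ has simple poles at $\pm i(2k+1)$ for $k \geq 0$, while $a_2$ has simple poles only at $\pm 2ki$ for $k \geq 1$, because the subtraction of $2/(\pi\xi)$ cancels the would-be pole of $\coth$ at the origin. Consequently each $a_j$ is holomorphic in a horizontal strip about $\Rbb$ of width $1$ (for $a_1$) or $2$ (for $a_2$). Our strategy will be to apply Cauchy's integral formula on a circle whose radius grows with $\langle\xi\rangle$, thereby producing the extra $\langle\xi\rangle^{-k}$ factor required by the multiplier condition.

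Concretely, we fix $d \in (0,1)$ and pick any $d' \in (d, 1)$. For $\xi \in \Rbb$ we take the circle $C_\xi := \{w \in \Cbb : |w-\xi| = d'\langle\xi\rangle\}$. The distance from $\xi$ to any pole $i(2n+1)$ is $\sqrt{\xi^2 + (2n+1)^2} \geq \langle\xi\rangle$, so $C_\xi$ encloses no poles and actually stays at distance at least $(1-d')\langle\xi\rangle$ from every pole. The first key step will be to prove the uniform bound $M(d') := \sup_\xi \sup_{w \in C_\xi}|a_1(w)| < \infty$, after which Cauchy's formula immediately yields
\begin{equation*}
\langle\xi\rangle^k\,|a_1^{(k)}(\xi)| \leq \frac{M(d')\,k!}{(d')^k}.
\end{equation*}
Applying this with $k = \beta + \gamma$ and invoking Stirling's formula, the desired multiplier condition will reduce to the elementary inequality $\sqrt{k}\,k^\gamma\,(d/d')^k \leq c$, which holds because $d/d' < 1$ forces the geometric factor to dominate the polynomial one. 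The treatment of $a_2$ is entirely analogous and in fact easier, since the wider strip of analyticity allows for a larger $d'$.

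The main obstacle we anticipate is the uniform bound $M(d') < \infty$. For $\xi$ in a bounded set this is a routine compactness argument, since the circles $C_\xi$ then vary in a compact family of circles lying in a fixed compact subset of the strip of analyticity. For $|\xi|$ large, we will use the identity
\begin{equation*}
|\tanh(\pi(u+iv)/2)|^2 = \frac{\sin^2(\pi v/2) + \sinh^2(\pi u/2)}{\cos^2(\pi v/2) + \sinh^2(\pi u/2)} \leq 1 + \frac{1}{\sinh^2(\pi u/2)}
\end{equation*}
together with $|\Real w| \geq |\xi| - d'\langle\xi\rangle$ on $C_\xi$, whose right-hand side grows linearly in $|\xi|$, to conclude that $|a_1(w)|$ tends to $1$. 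In the intermediate regime, where $C_\xi$ may approach the pole at $i$, the worst case is $\xi = 0$, giving $\sup_{|w|=d'}|a_1| = \tan(\pi d'/2)$, which still depends only on $d'$, as required.
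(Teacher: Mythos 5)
Your proof is correct, but it follows a genuinely different route from the paper's. The paper writes $\tanh(\pi\xi/2)$ and $\coth(\pi\xi/2)-2/(\pi\xi)$ as Mittag–Leffler (partial-fraction) series over their imaginary poles, differentiates term by term, and observes that $\langle\xi\rangle^n/\rho_k(\xi)^{n+1}\le\rho_1(\xi)/\rho_k(\xi)^2$ turns the $n$-th derivative bound into the $n$-independent tail $\sum_k\rho_1/\rho_k^2$, yielding $\langle\xi\rangle^n|D^n a_j|\lesssim n!$ with an explicit constant. You instead invoke Cauchy's formula on circles $C_\xi$ of radius $d'\langle\xi\rangle$ with $d<d'<1$, using that the poles of $a_1$ (at $(2k+1)i$) and of $a_2$ (at $2ki$, $k\neq 0$) all lie at distance at least $\langle\xi\rangle$ from $\xi$, so that $C_\xi$ stays at distance $\ge(1-d')\langle\xi\rangle\ge(1-d')$ from every pole; this again gives $\langle\xi\rangle^k|D^k a_j|\le M(d')\,k!/(d')^k$, after which both arguments close with the same Stirling computation. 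The one slightly informal spot in your write-up is the claim that for bounded $\xi$ the circles lie in a compact subset of ``the strip of analyticity'': the circles do leave the strip $|\Imag w|<1$ once $d'\langle\xi\rangle>1$, but the correct (and implicitly used) statement is that they stay in a compact region of $\Cbb$ minus the $(1-d')$-neighbourhoods of the poles, on which $a_j$ is continuous and hence bounded; for large $|\xi|$ your $\Real w$-bound kicks in. Your approach is a bit more flexible — it works for any meromorphic multiplier whose poles stay distance $\ge\langle\xi\rangle$ from the real point $\xi$, and it avoids manipulating an infinite series — at the cost of the soft uniform bound $M(d')<\infty$ and the extra $(d')^{-k}$ factor; the paper's series computation is more elementary and produces the cleaner bound $\langle\xi\rangle^n|D^n a_1|\le n!/2$ directly.
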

\begin{proof}
For $a_1$, note that hyperbolic tangent can be written as a fractional series
$$
\aligned
\tanh\Big(\frac{\pi\xi}{2}\Big) &= \frac{4\xi}{\pi} \sum_{k=1}^\infty \frac{1}{(2k-1)^2 + \xi^2}  \\
&=\frac{2}{\pi} \sum_{k=1}^\infty \bigg[\frac{1}{\xi + i\xi_k} + \frac{1}{\xi - i\xi_k}\bigg], \qquad \xi_k:=2k-1 \\
&=\frac{4}{\pi} \sum_{k=1}^\infty \frac{\cos\theta_k(\xi)}{\rho_k(\xi)}; \qquad \rho_k(\xi):=\sqrt{\xi_k^2 + \xi^2},\,\,\tan\theta_k(\xi)=\frac{\xi_k}{\xi}
\endaligned 
$$
In particular, for $n\in\Nbb_0$, we have
$$
\aligned
D^n_\xi \tanh\Big(\frac{\pi\xi}{2}\Big) &= (-1)^n n! \frac{2}{\pi} \sum_{k=1}^\infty \Bigg[\frac{1}{(\xi + i\xi_k)^{n+1}} + \frac{1}{(\xi - i\xi_k)^{n+1}}\Bigg] \\
&=(-1)^n n! \frac{2}{\pi} \sum_{k=1}^\infty \frac{\cos\big((n+1)\theta_k(\xi)\big)}{\rho^{n+1}_k(\xi)}
\endaligned
$$
and
$$
\aligned
\frac{\pi}{2}\langle\xi\rangle^n\Big|D^n_\xi &\tanh\big((\pi\xi)/2\big)\Big| \leq n! \sum_{k=1}^\infty \frac{\rho_1(\xi)}{\rho_k(\xi)^2} \\
&\leq n! \rho_1(\xi)\,\int_1^\infty \frac{dy}{ (2y-1)^2 + \xi^2} 
\endaligned
$$
where the right-hand side 
$$
f(\xi) := \rho_1(\xi)\,\int_1^\infty \frac{dy}{ (2y-1)^2 + \xi^2} = \frac{\rho_1(\xi)}{2\xi}\bigg(\frac{\pi}{2}- \arctan\frac{1}{\xi}\bigg)
$$
is monotonically increasing for $\xi\geq 0$ with $\lim_{\xi\rightarrow \infty}|f(\xi)|=\frac{\pi}{4}$, so in particular
$$
\sup_{\xi \in\Rbb}\langle\xi\rangle^n\big|D^n_\xi \tanh\big((\pi\xi)/2\big)\big|\leq \frac{n!}{2}.
$$
For $\ga\leq 4$, we have
$$
\sup_{\xi \in\Rbb}\langle\xi\rangle^{n+\ga}\big|D^{n+\ga}_\xi \tanh\big((\pi\xi)/2\big)\big| \leq \frac{n!}{2d^n}P(n)d^n
$$
where $P(n) = (n+4)!/n!$ is a polynomial of order 4 in $n$. The Stirling formula implies 
$$
n! \sim \sqrt{2\pi n}\Big(\frac{n}{e}\Big)^n,
$$
so we are finished if we can show $\sqrt{n} P(n)d^n$ is bounded, but this is true provided $d<1$.

Similarly, for $a_2$ we can write
$$
a_2(\xi)=\coth(\frac{\pi\xi}{2}) - \frac{2}{\pi\xi} = \frac{4\xi}{\pi} \sum_{k=1}^\infty \bigg[\frac{1}{\xi + i\xi_k} + \frac{1}{\xi - i\xi_k}\bigg]; \qquad \xi_k = 2k, 
$$
and the proof follows analogously.
\end{proof}

\end{document}